\newcommand{\onen}{{1 \cdots n}}
\newcommand{\jonejn}{{j_1, \ldots, j_n}}
\newcommand{\konekn}{{k_1, \ldots, k_n}}
\newcommand{\elloneelln}{{\ell_1, \ldots, \ell_n}}
\newcommand{\bigf}{\mathbf{F}}
\newcommand{\bigfbar}{\overline{\bigf}}
\newcommand{\bige}{\mathbf{E}}
\newcommand{\tail}{\bullet}
\newcommand{\whisk}{\textup{\dagger}}
\newcommand{\zer}{\texttt{0}} 
\title[Homotopy Equivalent Algebraic Structures]{Homotopy Equivalent Algebraic Structures in Multicategories and Permutative Categories}
\date{04 October 2022}
\subjclass[2020]{Primary: 18M65; Secondary: 55P42, 18M05}
\keywords{multicategory, permutative category, homotopy equivalence, operad algebra}
\begin{document}

\begin{abstract}
  We show that the free construction from multicategories to permutative categories is a categorically-enriched non-symmetric multifunctor.  Our main result then shows that the induced functor between categories of algebras is an equivalence of homotopy theories.  We describe an application to ring categories.

\end{abstract}

\maketitle

\tableofcontents
\section{Introduction}\label{sec:intro}

The endomorphism construction provides a $\Cat$-enriched multifunctor
\[
  \End\cn \permcatsu \to \Multicat
\]
where $\permcatsu$ is the $\Cat$-enriched multicategory of small permutative categories, multilinear functors, and multilinear transformations, and $\Multicat$ is the $\Cat$-enriched multicategory of small multicategories, multifunctors, and multinatural transformations.  Thus it induces a functor on categories of algebras
\[
  \End^\cO \cn (\permcatsu)^\cO \to \Multicat^\cO
\]
for any small $\Cat$-enriched multicategory $\cO$.

Regarded as a 2-functor of underlying 2-categories, and restricting to the sub-2-category $\permcatst$ of strict symmetric monoidal functors, $\End$ has a left 2-adjoint $\bigf$ described in \cite[Theorem~4.2]{elmendorf-mandell-perm} and \cite[Section~5]{johnson-yau-permmult}.  With respect to the stable equivalences created by Segal's $K$-theory functor \cite{segal}, the adjoint pair $(\bigf,\End)$ is shown to be an equivalence of homotopy theories in \cite[Theorem~7.3]{johnson-yau-permmult}.

In this article we show that $\bigf$ extends to a non-symmetric $\Cat$-enriched multifunctor taking values in $\permcatsu$.  This extension, also denoted $\bigf$, fails to be adjoint to $\End$; see \cref{remark:epz} for further discussion of this detail.

Our main result shows, nevertheless, that the induced functors between categories of $\cO$-algebras do induce an equivalence of homotopy theories, for a non-symmetric $\Cat$-multicategory $\cO$.  This is stated as follows.
\begin{theorem}\label{theorem:alg-hty-equiv}
  Suppose $\cO$ is a small non-symmetric $\Cat$-multicategory.  Then $\bigf$ and $\bige = \End$ induce an equivalence of homotopy theories between categories of non-symmetric algebras
  \[
    \bigf^\cO \cn \big(\Multicat^\cO, \cS\big) \lradj \big((\permcatsu)^\cO, \cS \big) \cn \bige^\cO,
  \]
  where, in each category, $\cS$ denotes the class of componentwise stable equivalences.
\end{theorem}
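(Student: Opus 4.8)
The plan is to deduce \cref{theorem:alg-hty-equiv} from a well-known criterion: given functors $F \cn \cC \to \cD$ and $G \cn \cD \to \cC$ between relative categories $(\cC,\cW)$ and $(\cD,\cV)$, if $F$ and $G$ preserve weak equivalences and there are natural transformations $\mathrm{Id}_\cC \Rightarrow GF$ and $FG \Rightarrow \mathrm{Id}_\cD$ whose components lie in $\cW$ and $\cV$ respectively, then $F$ and $G$ form an equivalence of homotopy theories. The essential point is that this requires no triangle identities, hence no adjunction between $F$ and $G$; that is what makes it usable here, since the multifunctor $\bigf$ is not adjoint to $\bige = \End$ (cf.\ \cref{remark:epz}). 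I would apply it with $\cC = \Multicat^\cO$, $\cD = (\permcatsu)^\cO$, $F = \bigf^\cO$, $G = \bige^\cO$, and $\cW = \cV = \cS$.

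The preservation hypothesis is formal. Evaluation at an object $x$ of $\cO$ gives functors $\mathrm{ev}_x$ on $\Multicat^\cO$ and $(\permcatsu)^\cO$, and by definition a morphism of $\cO$-algebras lies in $\cS$ exactly when each $\mathrm{ev}_x$ sends it to a stable equivalence. Since $\bigf^\cO$ and $\bige^\cO$ act on $\cO$-algebras by post-composition with $\bigf$ and $\bige$, one has $\mathrm{ev}_x \circ \bigf^\cO = \bigf \circ \mathrm{ev}_x$ and $\mathrm{ev}_x \circ \bige^\cO = \bige \circ \mathrm{ev}_x$, so both functors preserve $\cS$ as soon as $\bigf$ and $\bige$ preserve stable equivalences — part of the content of \cite[Theorem~7.3]{johnson-yau-permmult}.

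The substance of the proof is producing natural transformations $\eta^\cO \cn \mathrm{Id} \Rightarrow \bige^\cO\bigf^\cO$ and $\epsilon^\cO \cn \bigf^\cO\bige^\cO \Rightarrow \mathrm{Id}$ of functors between $\cO$-algebra categories with components in $\cS$. The underlying $2$-adjunction $(\bigf,\End)$ of \cite[Theorem~4.2]{elmendorf-mandell-perm} and \cite[Section~5]{johnson-yau-permmult} has a unit $\eta \cn \mathrm{Id}_{\Multicat} \Rightarrow \End\bigf$ and counit $\epsilon \cn \bigf\End \Rightarrow \mathrm{Id}_{\permcatsu}$ that are componentwise stable equivalences by \cite[Theorem~7.3]{johnson-yau-permmult}, and the task is to carry these over to $\cO$-algebras. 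For $\eta$ this reduces to checking that it is $\Cat$-multinatural with respect to the $\Cat$-multicategory structure of $\Multicat$ — a direct compatibility verification that does not invoke any universal property — after which whiskering $\eta$ with an $\cO$-algebra $\cO \to \Multicat$ yields $\eta^\cO$, whose component at $A$ is sent by each $\mathrm{ev}_x$ to the stable equivalence $\eta_{A(x)}$, so that $\eta^\cO$ lies (componentwise) in $\cS$. The counit side is the delicate one and I treat it separately below.

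I expect the main obstacle to be the transport of $\epsilon$. The counit of the $2$-adjunction is $2$-natural only for \emph{strict} symmetric monoidal functors, whereas the $\cO$-algebra structure in $\permcatsu$ is recorded via multilinear, hence generally non-strict, functors; this mismatch is precisely the source of the failure of $\bigf$ to be adjoint to $\End$ as multifunctors (cf.\ \cref{remark:epz}). I would get around it by replacing $\epsilon^\cO$ with a finite zig-zag of $\Cat$-multinatural transformations between $\bigf^\cO\bige^\cO$ and $\mathrm{Id}$ whose legs lie (componentwise) in $\cS$ — for example by interposing a strictification of the given $\cO$-algebra in $\permcatsu$ — since the criterion of the first paragraph applies verbatim to such a zig-zag. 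Constructing that zig-zag, and checking both its $\Cat$-multinaturality and that its legs are componentwise stable equivalences, is where the real content of the argument sits; once it is in hand, the criterion of the first paragraph gives the theorem.
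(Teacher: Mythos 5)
Your overall strategy --- apply the zigzag criterion of \cref{gjo29} to $\bigf^\cO$ and $\bige^\cO$, handle the $\Multicat$ side by verifying $\Cat$-multinaturality of $\eta$ and whiskering with $\cO$-algebras, and isolate the $\permcatsu$ side as the real difficulty --- is exactly the paper's, and your diagnosis of why the counit $\epz$ cannot be used (it is natural only for strict symmetric monoidal functors, while the structure maps of an $\cO$-algebra in $\permcatsu$ are merely multilinear) is precisely \cref{remark:epz}. The gap is that the one construction you defer is the theorem's actual content, and the repair you gesture at is not the one that works. The paper does not strictify the $\cO$-algebra and does not use $\epz$ at all. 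Instead it uses the map in the \emph{other} direction, $\rho_\C \cn \C \to \bigf\bige\C$ of \cref{definition:rho} (inclusion of length-one tuples), which \emph{is} multinatural with respect to arbitrary multilinear functors and transformations; its only defect is that its unit constraint $\rho^0$ is not an identity, so it is not a 1-cell of $\permcatsu$. That defect is repaired by the $(-)^\tail$ construction of \cref{definition:Cmark,definition:Pmark}, which converts any symmetric monoidal functor into a zigzag $\C \leftarrow \C^\tail \to \D$ of strictly unital ones and is itself a multifunctor $\permcatsu \to \permcatsu$. This produces the $\Cat$-multinatural zigzag $1 \leftarrow (-)^\tail \to \bigf\bige$ of \cref{lemma:rho-multinat}, whose legs are componentwise stable equivalences by \cref{proposition:two-transf} together with the adjunction $\C^\tail \lradj \C$ and 2-out-of-3; \cref{gjo29} then applies.

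By contrast, ``interposing a strictification of the given $\cO$-algebra in $\permcatsu$'' would require a rectification theorem for $\Cat$-multifunctors $\cO \to \permcatsu$ landing in strict multilinear functors, together with a comparison zigzag of componentwise stable equivalences --- a statement at least as strong as the coherence results this paper is carefully avoiding, and not available off the shelf. Without either that rectification or the $\rho^\tail$ zigzag, the second half of your argument does not close. A smaller quibble: your claim that $\bige$ preserves stable equivalences ``as part of the content of'' the cited adjunction theorem deserves a sentence --- in this paper stable equivalences of multicategories are \emph{defined} by applying $\bigf$ (\cref{definition:she}), so preservation by $\bigf$ is definitional, while preservation by $\bige$ follows from naturality of $\rho$ (or $\epz$) and 2-out-of-3 rather than being automatic.
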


The proof of \cref{theorem:alg-hty-equiv} is given in \cref{sec:hty-thy-equiv}.
Our main application, \Cref{corollary:En-alg-hty-equiv} focuses on ring categories. 
One key advantage of $\Multicat$ over $\permcatsu$ is that the former has a closed symmetric monoidal structure provided by the Boardman-Vogt tensor product (\cref{definition:BVtensor}).
The related smash product for pointed multicategories gives the $\Cat$-multicategory structure on $\permcatsu$, but does not induce a symmetric monoidal structure (see \cite[5.7.23 and~10.2.17]{cerberusIII}).

\subsection*{Outline}
We begin with several sections reviewing relevant concepts.
In  \cref{sec:hty-thy} we review equivalences of homotopy theories from \cite{gjo1}.
In \cref{sec:enrmulticat,sec:Multicat,sec:permcat} we review enriched multicategories and the $\Cat$-multicategory of permutative categories.  The third of these sections recalls the $\Cat$-multifunctor $\End$ from \cite{cerberusIII}.
In \cref{sec:free-perm} we review the definition of the left adjoint $\bigf$ from \cite{elmendorf-mandell,johnson-yau-permmult}.

The main results of this article are contained in the remaining sections.
We show that $\bigf$ is a non-symmetric $\Cat$-multifunctor in \Cref{sec:multimorphism-assignment,sec:cat-multi-f}.
In \cref{sec:two-transf} we develop the transformations comparing the composites $\bige\bigf$ and $\bigf\bige$ with the respective identity functors.
The proof of \cref{theorem:alg-hty-equiv} appears in \cref{sec:hty-thy-equiv}, after a review of stable equivalences for multicategories and permutative categories.
\Cref{sec:application} describes the application to ring categories as $E_1$-algebras in $\Cat$.

\subsection*{Acknowledgment}
We thank the referee for several helpful suggestions.

\section{Equivalences of Homotopy Theories}\label{sec:hty-thy}

In this section we review the theory of complete Segal spaces due to Rezk \cite{rezk-homotopy-theory}.
An equivalence of homotopy theories (\cref{definition:hty-thy}) is an equivalence of fibrant replacements in the complete Segal space model structure.
For further context and development we refer the reader to \cite{dwyer-kan,hirschhorn,toen-axiomatisation,barwick-kan}.

\subsection*{Complete Segal Spaces}

For the purpose of this paper, we only need to know the existence of the complete Segal space model structure in \cref{theorem:css-fibrant}; we will not use the specific definition of a complete Segal space.  The next definition is included only for the reader's information.  A \emph{bisimplicial set} is a simplicial object in the category of simplicial sets.  For a bisimplicial set $X = \{X(n)\}_{n \geq 0}$, each object $X(n)$ is a simplicial set.  In the following definition, $\mathbf{2}$ denotes the nerve, also known as the classifying space, of the category consisting of two isomorphic objects.  See \cite[Example I.1.4]{goerss-jardine} or \cite[Definition 7.2.3]{cerberusIII} for more discussion of the nerve.  

\begin{definition}\label{definition:css}
  A bisimplicial set $X$ is a \emph{complete Segal space} if it satisfies the following three conditions.
  \begin{enumerate}
  \item $X$ is a fibrant object in the Reedy model structure on bisimplicial sets.
  \item For each $n \ge 2$ the Segal map
    \[
    X(n) \to X(1) \times_{X(0)} \cdots \times_{X(0)} X(1)
    \]
    is a weak equivalence of simplicial sets.
  \item The morphism
    \begin{equation}\label{eq:css-char}
    X(0) \iso \Map(\De[0],X) \to \Map(\mathbf{2},X),
    \end{equation}
    which is induced by the unique morphism $\mathbf{2} \to \De[0]$, is a weak equivalence of simplicial sets. \dqed
  \end{enumerate}
\end{definition}

\begin{remark}
  The definition of complete Segal space given above is equivalent to that given in \cite[Section~6]{rezk-homotopy-theory} by \cite[6.4]{rezk-homotopy-theory}.
\end{remark}

\begin{theorem}[{\cite[7.2]{rezk-homotopy-theory}}]\label{theorem:css-fibrant}
  There is a simplicial closed model structure on the category of bisimplicial sets, called the \emph{complete Segal space model structure}, that is given as a left Bousfield localization of the Reedy model structure and in which the fibrant objects are precisely the complete Segal spaces.
\end{theorem}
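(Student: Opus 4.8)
The plan is to realize the complete Segal space model structure as a left Bousfield localization of the Reedy model structure and then to identify its fibrant objects with the complete Segal spaces of \cref{definition:css}. First I would assemble the standing facts about the ambient structure. Viewing bisimplicial sets as simplicial objects in simplicial sets, one equips the category with the Reedy model structure built from the Kan--Quillen model structure on simplicial sets, together with the simplicial enrichment whose mapping object $\Map(X,Y)$ has $n$-simplices $\Hom(X \times \De[n], Y)$, for $\De[n]$ constant in the external direction. This Reedy structure is simplicial, cofibrantly generated, combinatorial, and left proper; left properness is inherited from Kan--Quillen because every simplicial set is cofibrant, and combinatoriality and the simplicial enrichment pass to Reedy diagram categories. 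These are exactly the hypotheses required to invoke the general existence theorem for left Bousfield localizations: localizing a left proper, combinatorial, simplicial model category at any set of maps again yields a left proper, combinatorial, simplicial model structure on the same underlying category, with the same cofibrations, whose fibrant objects are precisely the Reedy-fibrant objects that are \emph{local} with respect to the chosen set.

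Second I would specify the set $S$ of maps to localize at, chosen so that $S$-locality reproduces the conditions of \cref{definition:css}. Take $S$ to consist of the spine inclusions $G(n) \to F(n)$ for each $n \ge 2$, where $F(n) = \De[n]$ is the representable object in the external direction and $G(n)$ is its spine $\De[1] \cup_{\De[0]} \cdots \cup_{\De[0]} \De[1]$, together with the single map $E \to F(0)$ induced by the unique morphism collapsing the discrete nerve $E$ of the category of two isomorphic objects. By definition of the localization, an object $W$ is $S$-local precisely when it is Reedy fibrant and the induced maps $\Map(F(n),W) \to \Map(G(n),W)$ and $\Map(F(0),W) \to \Map(E,W)$ are weak equivalences of simplicial sets.

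Third I would match $S$-locality against the three bullet conditions. Reedy fibrancy is the first bullet directly. For a Reedy-fibrant $W$, the mapping object $\Map(G(n),W)$ is computed by the iterated fiber product $W(1) \times_{W(0)} \cdots \times_{W(0)} W(1)$, so the equivalence condition on $\Map(F(n),W) \to \Map(G(n),W)$ is precisely the Segal condition of the second bullet. The equivalence condition on $\Map(F(0),W) \to \Map(E,W)$ is, by construction, exactly the completeness morphism \eqref{eq:css-char} of the third bullet. Hence the $S$-local Reedy-fibrant objects are exactly the complete Segal spaces, and the localized structure is the asserted complete Segal space model structure, simplicial because left Bousfield localization preserves the simplicial enrichment.

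The main obstacle is the identification in the third step of the derived mapping object $\Map(G(n),W)$ with the strict iterated fiber product $W(1) \times_{W(0)} \cdots \times_{W(0)} W(1)$. This requires that Reedy fibrancy of $W$ forces the face maps $W(1) \to W(0)$ to be Kan fibrations, so that the strict pullback already computes the homotopy pullback, together with the fact that each spine inclusion $G(n) \to F(n)$ is a Reedy cofibration, so that $\Map(G(n),-)$ is homotopically meaningful on Reedy-fibrant objects. Keeping the distinction between strict and derived mapping spaces straight, and confirming that the ``local $=$ derived-mapping-space equivalence'' output of the localization theorem aligns with Rezk's pointwise Segal and completeness formulations, is the delicate bookkeeping; the existence of the localization itself is formal once left properness and combinatoriality are in hand.
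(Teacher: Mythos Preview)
The paper does not prove this theorem; it is quoted from \cite[7.2]{rezk-homotopy-theory} as background and no argument is given. Your sketch is essentially Rezk's own argument: take the Reedy model structure on bisimplicial sets, which is simplicial, left proper, and cellular/combinatorial, and form the left Bousfield localization at the spine inclusions together with the map $E \to F(0)$; the identification of local objects with complete Segal spaces is exactly the computation you outline. So your proposal is correct and matches the cited source, but there is nothing in the present paper to compare it against.
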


\subsection*{Relative Categories}

\begin{definition}\label{definition:rel-cat}
  A \emph{relative category} is a pair $(\C,\cW)$ consisting of a category $\C$ and a subcategory $\cW$ containing all of the objects of $\C$.
  A \emph{relative functor}
  \[
  F\cn (\C,\cW) \to (\C',\cW')
  \]
  is a functor from $\C$ to  $\C'$ that sends morphisms of $\cW$ to those of $\cW'$.
\end{definition}

\begin{definition}\label{definition:rel-cat-pow}
  Suppose $(\C,\cW)$ is a relative category and $\A$ is another category.
  We let
  \[
  (\C,\cW)^\A
  \]
  denote the subcategory of $\C^\A$ whose objects are functors $\A \to \C$ and whose morphisms are those natural transformations with components in $\cW$.
\end{definition}

\begin{definition}\label{definition:classification-diagram}
  Suppose $(\C,\cW)$ is a relative category.
  The \emph{classification diagram} of $(\C,\cW)$ is the bisimplicial set
  \[
  \Ncl(\C,\cW) = \Ner\big((\C,\cW)^{\De[\bdot]}\big)
  \]
  given by
  \[
  n \mapsto \Ner\big((\C,\cW)^{\De[n]}\big)
  \]
  where $\De[n]$ denotes the category consisting of $n$ composable arrows.
\end{definition}

\begin{definition}\label{definition:hty-thy}
  Suppose $(\C,\cW)$ is a relative category.  We say that a bisimplicial set $R\Ncl(\C,\cW)$ is a \emph{homotopy theory of $(\C,\cW)$} if it is a fibrant replacement of $\Ncl(\C,\cW)$ in the complete Segal space model structure.
  We say that a relative functor
  \[
  F \cn (\C,\cW) \to (\C',\cW')
  \]
  is an \emph{equivalence of homotopy theories} if the induced morphism $R\Ncl F$ between homotopy theories is a weak equivalence in the complete Segal space model structure.
\end{definition}

\begin{remark}
  For readers familiar with the notions of hammock localization and $DK$-equivalence \cite{dwyer-kan}, Barwick and Kan have shown in \cite[1.8]{barwick-kan} that a relative functor
  \[
  F \cn (\C,\cW) \to (\C',\cW')
  \]
  is an equivalence of homotopy theories if and only if it induces a $DK$-equivalence between hammock localizations.
  In that case, $F$ induces equivalences between mapping simplicial sets and between categories of components.
  In particular, if $F$ is an equivalence of homotopy theories then the induced functor between categorical localizations
  \[
  \C[\cW^\inv] \to \C'[(\cW')^\inv]
  \]
  is an equivalence.
\end{remark}

\begin{proposition}[{\cite[2.8]{gjo1}}]\label{gjo28}
  Suppose
  \[
  F \cn (\C,\cW) \to (\C',\cW')
  \]
  is a relative functor and suppose that $F$ induces a weak equivalence of simplicial sets
  \begin{equation}\label{eq:F-level-we}
  \Ner\big((\C,\cW)^{\De[n]}\big) \to \Ner\big((\C',\cW')^{\De[n]}\big)
  \end{equation}
  for each $n$.  Then $F$ is an equivalence of homotopy theories.
\end{proposition}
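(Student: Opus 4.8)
The plan is to unwind the definition of an equivalence of homotopy theories (\cref{definition:hty-thy}) and reduce the statement to a two-out-of-three argument in the complete Segal space model structure. First I would note that, by \cref{definition:classification-diagram}, the relative functor $F$ induces a map of bisimplicial sets
\[
  \Ncl F \cn \Ncl(\C,\cW) \to \Ncl(\C',\cW'),
\]
and that the hypothesis \cref{eq:F-level-we} says precisely that $\Ncl F$ is a weak equivalence in each level of the simplicial direction indexed by the exponent $\De[n]$---that is, a levelwise weak equivalence once bisimplicial sets are regarded as functors $\Delta^{\mathrm{op}} \to \mathrm{sSet}$ with $\mathrm{sSet}$ carrying the Kan--Quillen model structure. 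Since the weak equivalences of the Reedy model structure on such diagrams are exactly the levelwise weak equivalences, $\Ncl F$ is a Reedy weak equivalence; note that no fibrancy of the classification diagrams is needed for this.

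Next I would invoke \cref{theorem:css-fibrant}: the complete Segal space model structure is obtained from the Reedy model structure by left Bousfield localization, hence has the same cofibrations and a (possibly larger) class of weak equivalences. In particular every Reedy weak equivalence---and so $\Ncl F$ in particular---is a weak equivalence in the complete Segal space model structure.

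Finally, using functorial factorization in the complete Segal space model structure, I would choose fibrant replacements so as to obtain a commuting square whose horizontal arrows are the replacement maps $\Ncl(\C,\cW) \to R\Ncl(\C,\cW)$ and $\Ncl(\C',\cW') \to R\Ncl(\C',\cW')$, whose left vertical arrow is $\Ncl F$, and whose right vertical arrow is $R\Ncl F$. The horizontal arrows are complete Segal space weak equivalences by construction, and the left vertical arrow is one by the previous paragraph, so by the two-out-of-three property $R\Ncl F$ is a complete Segal space weak equivalence. Since the property of being a weak equivalence between fibrant replacements does not depend on the choice of replacements, this is exactly the statement that $F$ is an equivalence of homotopy theories.

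I do not expect a substantive obstacle here: the content lies in matching the hypothesis \cref{eq:F-level-we} to the notion of a Reedy weak equivalence, and then citing that left Bousfield localization only enlarges the class of weak equivalences. The one point deserving care is the bookkeeping of the two simplicial directions---the nerve direction supplies the ``spaces'' while the exponent $\De[\bdot]$ direction supplies the Segal and completeness data---so that \cref{eq:F-level-we} is genuinely a levelwise, hence Reedy, weak equivalence and the localization argument applies as stated.
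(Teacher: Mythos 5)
Your proposal is correct and follows essentially the same route as the paper: identify the hypothesis with $\Ncl F$ being a Reedy (levelwise) weak equivalence, observe that left Bousfield localization preserves weak equivalences so $\Ncl F$ is a complete Segal space weak equivalence, and conclude via fibrant replacement. The paper leaves the final two-out-of-three step implicit, which you spell out, but there is no substantive difference.
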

\begin{proof}
  The assumption that \cref{eq:F-level-we} is a weak equivalence for each $n$ means that
  \[
  \Ncl F\cn \Ncl(\C,\cW) \to \Ncl(\C',\cW')
  \]
  is a weak equivalence between classification diagrams in the Reedy model structure \cite[Section~2.4]{rezk-homotopy-theory}.
  Thus $\Ncl F$ is a weak equivalence in the complete Segal space model structure because it is a localization of the Reedy model structure.
  As a consequence, $\Ncl F$ induces a weak equivalence between the homotopy theories given by fibrant replacements.
\end{proof}

Because a natural transformation between functors induces a simplicial homotopy on nerves, we have the following application of \cref{gjo28}.  Its proof is similar to that of \cite[2.12]{johnson-yau-permmult}.
\begin{proposition}[{\cite[2.9]{gjo1}}]\label{gjo29}
  Suppose given relative functors
  \[
  F \cn (\C, \cW) \lradj (\C',\cW') \cn E
  \]
  such that each of the composites $EF$ and $FE$ is connected to the respective identity functor by a zigzag of natural transformations whose components are in $\cW$ and $\cW'$, respectively.
  Then $F$ and $E$ are equivalences of homotopy theories.
\end{proposition}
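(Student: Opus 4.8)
The plan is to reduce to \cref{gjo28}. By that proposition it suffices to show that $F$ induces a weak equivalence of simplicial sets $\Ner\big((\C,\cW)^{\De[n]}\big) \to \Ner\big((\C',\cW')^{\De[n]}\big)$ for every $n$; the corresponding statement for $E$ then follows by the symmetric argument.

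First I would check that the relative power construction $(-)^{\De[n]}$ of \cref{definition:rel-cat-pow} is functorial in relative functors: post-composition with a relative functor $G \cn (\C,\cW) \to (\C',\cW')$ sends a diagram $x \cn \De[n] \to \C$ to $Gx$ and sends a natural transformation with components in $\cW$ to its image under $G$, whose components lie in $\cW'$ because $G$ is relative. This produces induced functors $F^{\De[n]}$ and $E^{\De[n]}$ on the relevant power categories, and by functoriality $(EF)^{\De[n]} = E^{\De[n]} F^{\De[n]}$ and $(FE)^{\De[n]} = F^{\De[n]} E^{\De[n]}$.

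The key step is to promote the two given zigzags of natural transformations to the power categories. A natural transformation $\alpha \cn G \Rightarrow H$ between functors $\C \to \C'$ with components in $\cW'$ induces a natural transformation $\alpha^{\De[n]} \cn G^{\De[n]} \Rightarrow H^{\De[n]}$ whose component at a diagram $x \cn \De[n] \to \C$ is the natural transformation $\De[n] \to \C'$ with components $\alpha_{x(i)}$; since each $\alpha_{x(i)}$ lies in $\cW'$, this component is a morphism of $(\C',\cW')^{\De[n]}$, so $\alpha^{\De[n]}$ is a genuine natural transformation of functors into that relative power category. Applying this componentwise to the given zigzags connecting $EF$ to the identity of $\C$ and $FE$ to the identity of $\C'$ yields zigzags of natural transformations connecting $E^{\De[n]} F^{\De[n]}$ to the identity of $(\C,\cW)^{\De[n]}$ and $F^{\De[n]} E^{\De[n]}$ to the identity of $(\C',\cW')^{\De[n]}$.

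Finally I would take nerves. A natural transformation of functors induces a simplicial homotopy between the induced maps on nerves, so a zigzag of natural transformations induces a chain of simplicial homotopies; hence $\Ner(F^{\De[n]})$ and $\Ner(E^{\De[n]})$ are mutually inverse homotopy equivalences of simplicial sets, and in particular $\Ner(F^{\De[n]})$ is a weak equivalence for each $n$. Then \cref{gjo28} gives that $F$ is an equivalence of homotopy theories, and exchanging the roles of $F$ and $E$ gives the same for $E$. The argument is purely formal; the only points that need care are the two functoriality verifications above — that post-composition with a relative functor preserves the relative power structure, and that a $\cW'$-valued transformation induces one valued in morphisms of $(\C',\cW')^{\De[n]}$ — together with the standard fact that a natural transformation yields a simplicial homotopy on nerves. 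The bookkeeping for zigzags rather than single transformations is routine once these are in hand, so I do not anticipate a real obstacle.
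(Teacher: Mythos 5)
Your argument is correct and is exactly the route the paper intends: it reduces to \cref{gjo28} by lifting the zigzags of $\cW$- and $\cW'$-valued transformations to the power categories $(\C,\cW)^{\De[n]}$ and using that natural transformations induce simplicial homotopies on nerves, which is precisely the justification the paper gives in the sentence preceding the proposition. The only implicit point worth being aware of (and which the paper also leaves tacit) is that the intermediate functors in the zigzags must themselves be relative functors so that they induce endofunctors of the power categories.
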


\section{Enriched Multicategories}\label{sec:enrmulticat}

In this section we review the definitions of enriched multicategories, multifunctors, and multinatural transformations.  Detailed discussion of enriched multicategories is in \cite[Chapters 5 and 6]{cerberusIII} and \cite{yau-operad}.

Our application of interest will be the case $\V = (\Cat, \times, \boldone, \xi)$ of small categories and functors with the Cartesian product $\times$.  
We begin with some notation.

\begin{definition}\label{def:profile}
Suppose $C$\label{notation:s-class} is a class.  
\begin{itemize}
\item Denote by\label{notation:profs}
\[\Prof(C) = \coprodover{n \geq 0}\ C^{\times n}\] 
the class of finite ordered sequences of elements in $C$.  An element in $\Prof(C)$ is called a \emph{$C$-profile}.  
\item A typical $C$-profile of length
  $n=\len\angc$ is denoted by $\angc = (c_1, \ldots, c_n) \in
  C^{n}$\label{notation:us} or by $\ang{c_i}_i$ to indicate
  the indexing variable.  The empty $C$-profile
  is denoted by $\ang{}$.
\item We let $\oplus$ denote the \label{not:concat}concatenation of profiles, and note
  that $\oplus$ is an associative binary operation with unit given by
  the empty tuple $\ang{}$.
\item An element in $\Prof(C)\times C$ is denoted
  as\label{notation:duc} $\IMMduc$ with $c'\in C$ and
  $\angc\in\Prof(C)$.
  \defmark
\end{itemize}
\end{definition}

\begin{convention}[Symmetric Monoidal $\V$]\label{explanation:vst-for-enrichment}
Throughout this section we assume that 
\[\V = (\V,\otimes,\tu,\alpha,\lambda,\rho,\xi)\]
is a symmetric monoidal category.  Throughout the rest of this work, unless otherwise specified, we assume that each iterated monoidal product is \emph{left normalized} with the left half of each pair of parentheses at the far left.  For example, we denote
\[a \otimes b \otimes c \otimes d = \big((a \otimes b) \otimes c\big) \otimes d.\]
With this convention, we omit most of the parentheses for iterated monoidal products and tacitly insert the necessary associativity and unit isomorphisms.  This is valid because there is a strong symmetric monoidal adjoint equivalence $\V \to \V_\st$ with $\V_\st$ a permutative category \cite[1.3.10]{cerberusI}.  Because strictification is an equivalence, the strict diagrams commute if and only if their preimages in $\V$ commute.
\end{convention}

\begin{definition}\label{def:enr-multicategory}
A \emph{$\V$-multicategory} $(\M, \gamma, \operadunit)$\label{notation:enr-multicategory} consists of the following data.
\begin{itemize}
\item $\M$ is equipped with a class $\ObM$
  of \emph{objects}.  We write $\Prof(\M)$
  for $\Prof(\Ob\M)$.
\item For $c'\in\ObM$ and $\angc=(c_1,\ldots,c_n)\in\ProfM$, $\M$ is
  equipped with an object of $\V$\label{notation:enr-cduc}
  \[\M\IMMduc = \M\mmap{c'; c_1,\ldots,c_n} \in \V\]
  called the \emph{$n$-ary operation object} or \emph{multimorphism object} with \emph{input profile} $\angc$ and
  \emph{output} $c'$.
\item
For $\IMMduc \in \ProfMM$ as above and a permutation $\sigma \in
\Sigma_n$, $\M$ is equipped with an isomorphism in $\V$
\[\begin{tikzcd}\M\IMMduc \rar{\sigma}[swap]{\cong} & \M\IMMducsigma,\end{tikzcd}\]
called the \emph{right action} or the \emph{symmetric group action}, in which\label{enr-notation:c-sigma}
\[\angc\sigma = (c_{\sigma(1)}, \ldots, c_{\sigma(n)})\]
is the right permutation of $\angc$ by $\sigma$.
\item For $c \in \ObM$, $\M$ is equipped with a morphism\label{notation:enr-unit-c}
\[\operadunit_c\cn \tensorunit \to \M\IMMcc,\] called the \emph{$c$-colored unit}.
\item For
  $c'' \in \ObM$, $\ang{c'} = (c'_1,\ldots,c'_n) \in \ProfM$, and
  $\ang{c_j} = (c_{j,1},\ldots,c_{j,k_j}) \in \ProfM$ for each $j\in\{1,\ldots,n\}$, let
  $\angc = \oplus_j\ang{c_j} \in \ProfM$
  be the concatenation of the $\ang{c_j}$.  Then
  $\M$ is equipped with a
  morphism in $\V$\label{notation:enr-multicategory-composition}
  \begin{equation}\label{eq:enr-defn-gamma}
    \begin{tikzcd}
      \M\mmap{c'';\ang{c'}} \otimes
      \bigotimes\limits_{j=1}^n \M\mmap{c_j';\ang{c_j}}
      \rar{\gamma}
      &
      \M\mmap{c'';\ang{c}}
    \end{tikzcd}
  \end{equation}
called the \emph{composition} or \emph{multicategorical composition}. 
\end{itemize}
These data are required to satisfy the following axioms.
\begin{description}
\item[Symmetric Group Action]
For $\mmap{c';\ang{c}}\in\ProfMM$ with $n=\len\ang{c}$ and
$\sigma,\tau\in\Sigma_n$, the following diagram in $\V$ commutes. 
\begin{equation}\label{enr-multicategory-symmetry}
\begin{tikzcd}
\M\IMMduc \arrow{rd}[swap]{\sigma\tau} \rar{\sigma} & \M\IMMducsigma \dar{\tau}\\
& \M\mmap{c';\angc\sigma\tau}
\end{tikzcd}
\end{equation}
Moreover, the identity permutation in $\Sigma_n$ acts as the identity morphism of $\M\IMMduc$.
\item[Associativity]
Suppose given
\begin{itemize}
\item $c''' \in \ObM$,
\item $\ang{c''} = (c''_1,\ldots,c''_{n}) \in \ProfM$,
\item $\ang{c_j'} = (c'_{j,1},\ldots,c'_{j,k_{j}}) \in \ProfM$ for each
  $j \in \{1,\ldots,n\}$, and
\item $\ang{c_{j,i}} = (c_{j,i,1},\ldots,c_{j,i,\ell_{j,i}}) \in \ProfM$ for
  each $j\in\{1,\ldots,n\}$ and each $i \in \{1,\ldots,k_j\}$,
\end{itemize}
such that $k_j = \len\ang{c_j'} > 0$ for at least one $j$.  For each $j$,
let $\ang{c_j} = \oplus_{i=1}^{k_j}{\ang{c_{j,i}}}$ denote the concatenation of
the $\ang{c_{j,i}}$.  Let $\ang{c} =
\oplus_{j=1}^n{\ang{c_{j}}}$ denote the concatenation of the $\ang{c_j}$.
Let $\ang{c'} = \oplus_{j=1}^n \ang{c'_j}$ denote the concatenation of the $\ang{c'_j}$.
Then the \emph{associativity diagram} below commutes.
\begin{equation}\label{enr-multicategory-associativity}
\begin{tikzpicture}[x=40mm,y=17mm,vcenter]
  \draw[0cell=.85] 
  (0,0) node (a) {\textstyle
    \M\mmap{c''';\ang{c''}}
    \otimes
    \biggl[\bigotimes\limits_{j=1}^n \M\mmap{c''_j;\ang{c'_{j}}}\biggr]
    \otimes
    \bigotimes\limits_{j=1}^n \biggl[\bigotimes\limits_{i=1}^{k_j} \M\mmap{c'_{j,i};\ang{c_{j,i}}}\biggr] 
  }
  (1,.8) node (b) {\textstyle
    \M\mmap{c''';\ang{c'}}
    \otimes
    \bigotimes\limits_{j=1}^{n} \biggl[\bigotimes\limits_{i=1}^{k_j} \M\mmap{c'_{j,i};\ang{c_{j,i}}}\biggr]
  }
  (0,-1.2) node (a') {\textstyle
    \M\mmap{c''';\ang{c''}} \otimes
    \bigotimes\limits_{j=1}^n \biggl[\M\mmap{c_j'';\ang{c_j'}} \otimes \bigotimes\limits_{i=1}^{k_j} \M\mmap{c'_{j,i};\ang{c_{j,i}}}\biggr]
  }
  (1,-2) node (b') {\textstyle
    \M\mmap{c''';\ang{c''}} \otimes \bigotimes\limits_{j=1}^n \M\mmap{c_j'';\ang{c_{j}}}
  }
  (1.2,-.6) node (c) {\textstyle
    \M\mmap{c''';\ang{c}}
  }
  ;
  \draw[1cell=.85]
  (a) edge node {\iso} node['] {\mathrm{permute}} (a')
  (a) edge[shorten >=-2ex] node[pos=.3] {(\ga,1)} (b)
  (b) edge node {\ga} (c)
  (a') edge['] node {(1,\textstyle\bigotimes_j \ga)} (b')
  (b') edge['] node {\ga} (c)
  ;
\end{tikzpicture}
\end{equation}

\item[Unity]
Suppose $c' \in \ObM$.
\begin{enumerate}
\item If $\angc = (c_1,\ldots,c_n) \in \ProfM$ has length $n \geq 1$,
  then the following \emph{right unity diagram}
  is commutative.  Here $\tensorunit^n$ is
  the $n$-fold monoidal product of $\tensorunit$ with itself.
  \begin{equation}\label{enr-multicategory-right-unity}
    \begin{tikzcd} \M\IMMduc \otimes \tensorunit^{n} \dar[swap]{1 \otimes (\otimes_j \operadunit_{c_j})} \rar{\rho} & \M\IMMduc \dar{1}\\
      \M\IMMduc \otimes \bigotimes\limits_{j=1}^n \M\IMMcjcj \rar{\gamma} & \M\IMMduc
    \end{tikzcd}
  \end{equation}

\item
For any $\ang{c} \in \ProfM$, the \emph{left unity diagram}
below is commutative.
\begin{equation}\label{enr-multicategory-left-unity}
\begin{tikzcd}
\tensorunit \otimes \M\IMMduc \dar[swap]{\operadunit_{c'} \otimes 1} \rar{\lambda} & 
\M\IMMduc \dar{1}\\
\M\mmap{c';c'} \otimes \M\IMMduc \rar{\gamma} & \M\IMMduc
\end{tikzcd}
\end{equation}
\end{enumerate}
\item[Equivariance]
Suppose that in the definition of $\gamma$ \cref{eq:enr-defn-gamma}, $\mathrm{len}\ang{c_j} = k_j \geq 0$.
\begin{enumerate}
\item For each $\sigma \in \Sigma_n$, the following \emph{top equivariance diagram} is commutative.
\begin{equation}\label{enr-operadic-eq-1}
\begin{tikzcd}[column sep=large,cells={nodes={scale=.9}},
every label/.append style={scale=.9}]
\M\mmap{c'';\ang{c'}} \otimes \bigotimes\limits_{j=1}^n \M\mmap{c'_j;\ang{c_j}} 
\dar[swap]{\gamma} \rar{(\sigma, \sigma^{-1})}
& \M\mmap{c'';\ang{c'}\sigma} \otimes \bigotimes\limits_{j=1}^n \M\mmap{c'_{\sigma(j)};\ang{c_{\sigma(j)}}} \dar{\gamma}\\
\M\mmap{c'';\ang{c_1},\ldots,\ang{c_n}} \rar{\sigma\langle k_{\sigma(1)}, \ldots , k_{\sigma(n)}\rangle}
& \M\mmap{c'';\ang{c_{\sigma(1)}},\ldots,\ang{c_{\sigma(n)}}}
\end{tikzcd}
\end{equation}
Here $\sigma\langle k_{\sigma(1)}, \ldots , k_{\sigma(n)} \rangle \in \Sigma_{k_1+\cdots+k_n}$\label{notation:enr-block-permutation} is right action of the block permutation  that permutes the $n$ consecutive blocks of lengths $k_{\sigma(1)}$, $\ldots$, $k_{\sigma(n)}$ as $\sigma$ permutes $\{1,\ldots,n\}$, leaving the relative order within each block unchanged.
\item
Given permutations $\tau_j \in \Sigma_{k_j}$ for $1 \leq j \leq n$,
the following \emph{bottom equivariance
  diagram} is commutative.
\begin{equation}\label{enr-operadic-eq-2}
\begin{tikzcd}[cells={nodes={scale=.9}},
every label/.append style={scale=.9}]
\M\mmap{c'';\ang{c'}} \otimes \bigotimes\limits_{j=1}^n \M\mmap{c'_j;\ang{c_j}}
\dar[swap]{\gamma} \rar{(1, \otimes_j \tau_j)} & 
\M\mmap{c'';\ang{c'}} \otimes \bigotimes\limits_{j=1}^n \M\mmap{c'_j;\ang{c_j}\tau_j}\dar{\gamma} \\
\M\mmap{c'';\ang{c_1},\ldots,\ang{c_n}} \rar{\tau_1 \times \cdots \times \tau_n}
& \M\mmap{c'';\ang{c_1}\tau_1,\ldots,\ang{c_n}\tau_n}
\end{tikzcd}
\end{equation}
Here the block sum $\tau_1 \times\cdots \times\tau_n \in \Sigma_{k_1+\cdots+k_n}$\label{notation:enr-block-sum} is the image of $(\tau_1, \ldots, \tau_n)$ under the canonical inclusion \[\Sigma_{k_1} \times \cdots \times \Sigma_{k_n} \to \Sigma_{k_1 + \cdots + k_n}.\]
\end{enumerate}
\end{description}
This finishes the definition of a $\V$-multicategory.  

Moreover, we define the following.
\begin{itemize}
\item A $\V$-multicategory is \emph{small} if its class of objects is a set.
\item A \emph{$\V$-operad} is a $\V$-multicategory with one object.  If $\M$ is a $\V$-operad, then its object of $n$-ary operations is denoted by $\M_n \in \V$.
\item A \emph{multicategory} is a $\Set$-multicategory, where $(\Set,\times,*)$ is the symmetric monoidal category of sets and functions with the Cartesian product.
\item An \emph{operad} is a $\Set$-operad, that is, a multicategory with one object.
\item A \emph{non-symmetric $\V$-multicategory} is defined as above, except it does not have a designated symmetric group action or satisfy the related action or equivariance axioms.
  \defmark
\end{itemize}
\end{definition}

\begin{definition}\label{definition:IT}
  The \emph{initial multicategory} has an empty set of objects.
  The \emph{initial operad} $\Mtu$ consists of a single object $*$ and a single operation, which is the unit $1_*$.  The \emph{terminal multicategory} $\Mterm$ consists of a single object $*$ and a single $n$-ary operation $\iota_n$ for each $n \ge 0$.
\end{definition}

\begin{example}[Endomorphism Operad]\label{example:enr-End}
  Suppose $\M$ is a $\V$-multicategory and $c$ is an object
  of $\M$.  Then $\End(c)$ is the $\V$-operad consisting of
  the single object $c$ and $n$-ary operation object
  \[\End(c)_n = \M\mmap{c;\ang{c}} \in \V,\]
  where $\ang{c}$ denotes the constant $n$-tuple at $c$.  The
  symmetric group action, unit, and composition of $\End(c)$ are given
  by those of $\M$.
\end{example}

\begin{example}[Underlying Enriched Category]\label{ex:unarycategory}
Each $\V$-multicategory $(\M,\ga,\operadunit)$ has an underlying $\V$-category with
\begin{itemize}
\item the same objects,
\item identities given by the colored units, and
\item composition given by 
\[\begin{tikzcd}[column sep=large]
\M\scmap{b;c} \otimes \M\scmap{a;b} \ar{r}{\gamma} & \M\scmap{a;c}
\end{tikzcd}\]
for objects $a, b, c \in \M$.
\end{itemize}   
The $\V$-category associativity and unity diagrams as in e.g., \cite[1.2.1]{cerberusIII}, are the unary special cases of, respectively, the associativity diagram \cref{enr-multicategory-associativity} and the unity diagrams \cref{enr-multicategory-right-unity,enr-multicategory-left-unity} of a $\V$-multicategory.
\end{example}

\begin{definition}\label{def:enr-multicategory-functor}
A \emph{$\V$-multifunctor} $P \cn \M \to \N$ between $\V$-multicategories $\M$ and $\N$ consists of
\begin{itemize}
\item an object assignment $P \cn \ObM \to \ObN$ and
\item for each $\mmap{c';\ang{c}} \in \ProfMM$ with $\angc=(c_1,\ldots,c_n)$, a component morphism in $\V$
\[P \cn \M\mmap{c';\ang{c}} \to \N\mmap{Pc';P\ang{c}},\] where $P\angc=(Pc_1,\ldots,Pc_n)$.
\end{itemize}
These data are required to preserve the symmetric group action, the colored units, and the composition in the following sense.
\begin{description}
\item[Symmetric Group Action] For each $\IMMduc$ as above and each
  permutation $\sigma \in \Sigma_n$, the following
  diagram in $\V$ is commutative.
\begin{equation}\label{enr-multifunctor-equivariance}
\begin{tikzcd}
\M\mmap{c';\ang{c}} \ar{d}{\cong}[swap]{\sigma} \ar{r}{P} & \N\mmap{Pc';P\ang{c}} \ar{d}{\cong}[swap]{\sigma}\\
\M\mmap{c';\ang{c}\sigma} \ar{r}{P} & \N\mmap{c';P\ang{c}\sigma}\end{tikzcd}
\end{equation}
\item[Units] For each $c\in\ObM$, the following diagram in $\V$ is commutative.
\begin{equation}\label{enr-multifunctor-unit}
\begin{tikzpicture}[x=25mm,y=15mm,vcenter]
  \draw[0cell] 
  (0,0) node (a) {\tu}
  (1,.5) node (b) {\M\mmap{c;c}}
  (1,-.5) node (b') {\N\mmap{Pc;Pc}}
  ;
  \draw[1cell] 
  (a) edge node {\operadunit_c} (b)
  (a) edge node[swap,pos=.6] {\operadunit_{Pc}} (b')
  (b) edge node {P} (b')
  ;
\end{tikzpicture}
\end{equation} 
\item[Composition] For $c''$, $\ang{c'}$, and $\ang{c} =
  \oplus_j\ang{c_j}$ as in the definition of $\gamma$
  \cref{eq:enr-defn-gamma}, the following diagram in $\V$ is commutative.
\begin{equation}\label{v-multifunctor-composition}
\begin{tikzcd}[column sep=large,cells={nodes={scale=.9}},
every label/.append style={scale=.9}]
\M\mmap{c'';\ang{c'}} \otimes \bigotimes\limits_{j=1}^n \M\mmap{c'_j;\ang{c_j}} \dar[swap]{\gamma} \ar{r}{(P,\otimes_j P)} & \N\mmap{Pc'';P\ang{c'}} \otimes \bigotimes\limits_{j=1}^n \N\mmap{Pc'_j;P\ang{c_j}} \dar{\gamma}\\  
\M\mmap{c'';\ang{c}} \ar{r}{P} & \N\mmap{Pc'';P\ang{c}}
\end{tikzcd}
\end{equation}
\end{description}
This finishes the definition of a $\V$-multifunctor.  

Moreover, we define the following.
\begin{enumerate}
\item A \emph{multifunctor} is a $\Set$-multifunctor.
\item A $\V$-multifunctor $\M \to \N$ is also called an \emph{$\M$-algebra in $\N$}.
\item For another $\V$-multifunctor $Q \cn \N\to\sfL$ between $\V$-multicategories, where $\sfL$ has object class $\Ob\sfL$, the \emph{composition} $QP \cn \M\to\sfL$ is the $\V$-multifunctor defined by composing the assignments on objects 
\[\begin{tikzcd} \ObM \ar{r}{P} & \ObN \ar{r}{Q} & \Ob\sfL
\end{tikzcd}\]
and the morphisms on $n$-ary operations
\[\begin{tikzcd}
\M\mmap{c';\ang{c}} \ar{r}{P} & \N\mmap{Pc';P\ang{c}} \ar{r}{Q} & \sfL\mmap{QPc';QP\ang{c}}.
\end{tikzcd}\]
\item The \emph{identity $\V$-multifunctor} $1_{\M} \cn \M\to\M$ is defined by the identity assignment on objects and the identity morphism on $n$-ary operations.
\item A \emph{$\V$-operad morphism} is a $\V$-multifunctor between two $\V$-multicategories with one object.
\item\label{it:non-symm-multifunctor} A \emph{non-symmetric} $\V$-multifunctor consists of the same data as above, but is not required to preserve the symmetric group action of its source and target.  Thus a non-symmetric $\V$-multifunctor is only required to preserve the colored units and composition. \defmark
\end{enumerate}
\end{definition}

\begin{definition}\label{def:enr-multicat-natural-transformation}
Suppose $P,Q \cn \M\to\N$ are $\V$-multifunctors as in \cref{def:enr-multicategory-functor}.  A \emph{$\V$-multinatural transformation} $\theta \cn P\to Q$ consists of component morphisms in $\V$
\[\theta_c \cn \tu \to \N\mmap{Qc;Pc} \forspace c\in\ObM\] 
such that the following \emph{$\V$-naturality diagram} in $\V$
commutes for each $\mmap{c';\ang{c}} \in \ProfMM$ with
$\angc=(c_1,\ldots,c_n)$.
\begin{equation}\label{enr-multinat}
\begin{tikzpicture}[x=25mm,y=12mm,vcenter]
  \draw[0cell=.85]
  (.5,0) node (a) {\M\mmap{c';\ang{c}}}
  (1,1) node (b) {\tu \otimes \M\mmap{c';\ang{c}}}
  (3,1) node (c) {\N\mmap{Qc';Pc'} \otimes \N\mmap{Pc';P\ang{c}}}
  (3.5,0) node (d) {\N\mmap{Qc';P\ang{c}}}
  (1,-1) node (b') {\M\mmap{c';\ang{c}}\otimes \bigotimes\limits_{j=1}^n \tu}
  (3,-1) node (c') {\N\mmap{Qc';Q\ang{c}} \otimes \bigotimes\limits_{j=1}^n \N\mmap{Qc_j;Pc_j}}
  ;
  \draw[1cell=.85] 
  (a) edge node[pos=.3] {\la^\inv} (b)
  (a) edge node[swap,pos=.2] {\rho^\inv} (b')
  (b) edge node {\theta_{c'} \otimes P} (c)
  (c) edge node[pos=.7] {\ga} (d)
  (b') edge node {Q \otimes \bigotimes\limits_{j=1}^n \theta_{c_j}} (c')
  (c') edge['] node[pos=.7] {\ga} (d)
  ;
\end{tikzpicture}
\end{equation}
This finishes the definition of a $\V$-multinatural transformation.  

Moreover, we define the following.
\begin{itemize}
\item The \emph{identity $\V$-multinatural transformation} $1_P \cn P\to P$ has components \[(1_P)_c = \operadunit_{Pc} \forspace c\in\ObM.\]
\item A \emph{multinatural transformation} is a $\Set$-multinatural transformation.
\item For non-symmetric multifunctors $P,Q\cn \M \to \N$, a $\V$-multinatural transformation $\theta\cn P \to Q$ has the same definition given above, and we use the same terminology.
\defmark
\end{itemize} 
\end{definition}

\begin{definition}\label{def:enr-multinatural-composition}
Suppose $\theta \cn P \to Q$ is a $\V$-multinatural transformation between $\V$-multifunctors as in \cref{def:enr-multicat-natural-transformation}.
\begin{enumerate}
\item Suppose $\beta \cn Q \to R$ is a $\V$-multinatural transformation for
  a $\V$-multifunctor $R \cn \M \to \N$.  The \emph{vertical
    composition}\label{notation:enr-operad-vcomp}
\begin{equation}\label{multinatvcomp}
  \beta\theta \cn P \to R
\end{equation} 
is the $\V$-multinatural transformation with components at $c \in \ObM$ given by the following composites in $\V$.
  \[
  \begin{tikzpicture}[x=45mm,y=15mm]
    \draw[0cell] 
    (0,0) node (a) {\tu}
    (0,-1) node (b) {\tu \otimes \tu}
    (1,-1) node (c) {\N\mmap{Rc;Qc} \otimes \N\mmap{Qc;Pc}}
    (1,0) node (d) {\N\mmap{Rc;Pc}}
    ;
    \draw[1cell] 
    (a) edge node['] {\la^\inv} (b)
    (b) edge node {\be_c \otimes \theta_c} (c)
    (c) edge['] node {\ga} (d)
    (a) edge node {(\be\theta)_c} (d)
    ;
  \end{tikzpicture}
  \]
\item Suppose $\theta' \cn P' \to Q'$ is a $\V$-multinatural transformation for $\V$-multifunctors $P', Q' \cn \N \to \sfL$.  The \emph{horizontal composition}\label{notation:enr-operad-hcomp}
\begin{equation}\label{multinathcomp}
\theta' \ast \theta \cn P'P \to Q'Q
\end{equation} 
is the $\V$-multinatural
transformation with components at $c \in \ObM$ given by the following composites in $\V$.
\[
\begin{tikzpicture}[x=50mm,y=12mm]
  \draw[0cell=.9] 
  (0,0) node (a) {\tu}
  (0,-2) node (b) {\tu \otimes \tu}
  (1,-2) node (c) {\sfL\mmap{Q'Qc;P'Qc} \otimes \N\mmap{Qc;Pc}}
  (1,-1) node (d) {\sfL\mmap{Q'Qc;P'Qc} \otimes \sfL\mmap{P'Qc;P'Pc}}
  (1,0) node (e) {\sfL\mmap{Q'Qc;P'Pc}}
  ;
  \draw[1cell=.9] 
  (a) edge node {(\theta' * \theta)_c} (e)
  (a) edge['] node {\la^\inv} (b)
  (b) edge node {\theta'_{Qc} \otimes \theta_c} (c)
  (c) edge['] node {1 \otimes P'} (d)
  (d) edge['] node {\ga} (e)
  ;
\end{tikzpicture}
\]
\end{enumerate}
This finishes the definition.
\end{definition}

\section{The \texorpdfstring{$\Cat$}{Cat}-Multicategory of Small Multicategories}\label{sec:Multicat}

The 2-category $\Multicat$ of small multicategories, multifunctors, and multinatural transformations has a closed symmetric monoidal structure given by the Boardman-Vogt tensor product.  This induces a $\Cat$-multicategory structure that we will describe below.  We give only those details necessary for the arguments in the sequel, and refer the reader to \cite[Chapters~5 and~6]{cerberusIII} for a full treatment of related background.

We begin with preliminary notation.
\begin{definition}\label{definition:xiotimes}
  Given profiles $\ang{c} \in \Prof(C)$ and $\ang{d} \in \Prof(D)$
  with $m = \len\ang{c}$ and $n = \len\ang{d}$, we define the following profiles.
  \begin{align*}
    \ang{c} \times d_j & = \ang{(c_i,d_j)}_{i=1}^m = \big((c_1,d_j),(c_2,d_j),\ldots, (c_m,d_j)\big)\\
    c_i \times \ang{d} & = \ang{(c_i,d_j)}_{j=1}^n = \big((c_i,d_1),(c_i,d_2),\ldots, (c_i,d_n)\big)\\
    \ang{c} \otimes \ang{d} & = \ang{\ang{(c_i,d_j)}_{i=1}^m}_{j=1}^n
    = \big(\angc \times d_1, \ldots , \angc \times d_n\big)\\
    \ang{c} \otimes^\transp \ang{d} & = \ang{\ang{(c_i,d_j)}_{j=1}^n}_{i=1}^m
    = \big(c_1 \times \angd, \ldots , c_m \times \angd\big)
  \end{align*}
Thus, $\otimes$ uses the reverse lexicographic order, and $\otimes^\transp$ uses the lexicographic order.  We denote by 
\[\xitimes = \xitimes_{m,n} \cn \ang{c} \otimes \ang{d} \fto{\iso} \ang{c} \otimes^\transp \ang{d}\]
the permutation induced by changing order of indexing.
\end{definition}

\begin{example}\label{ex:profile-tensor}
Suppose $\ang{c} = \{c_1,c_2\}$ and $\ang{d} = \{d_1,d_2,d_3\}$.  Then the profiles $\angc \otimes \angd$ and $\angc \otimes^\transp \angd$ are given as follows.
\[\begin{split}
\angc \otimes \angd &= \big\{(c_1,d_1), (c_2,d_1), (c_1,d_2), (c_2,d_2), (c_1,d_3), (c_2,d_3) \big\}\\
\angc \otimes^\transp \angd &= \big\{(c_1,d_1), (c_1,d_2), (c_1,d_3), (c_2,d_1), (c_2,d_2), (c_2,d_3)\big\}
\end{split}\]
Note that $\xitimes_{m,1}$ and $\xitimes_{1,n}$ are identity permutations.
\end{example}

\begin{remark}[Choice of Ordering]\label{rk:reverse-lex}
Throughout this paper we consistently use the reverse lexicographic ordering $\otimes$.  This is simply a matter of convenience.  In other words, we can also consistently use the lexicographic ordering $\otimes^\transp$ throughout, and our main results remain valid, as we explain further in \cref{remark:lex-order}.  The reason we prefer $\otimes$ over $\otimes^\transp$ is that, in the profile $\angc \otimes \angd$, the indices $i$ and $j$ appear in the same left-to-right order as they do in $(c_i,d_j)$.  Some of our main constructions involve iterating the tensor product; see \cref{eq:xonen,eq:phionen}.  If we use $\otimes^\transp$ instead of $\otimes$, then \cref{eq:xonen} would involve the indices 
\[1 \leq j_n \leq r_n, \quad 1 \leq j_{n-1} \leq r_{n-1}, \quad \ldots,\quad 1 \leq j_1 \leq r_1,\]
further complicating the formulas.
\end{remark}

\begin{definition}[Boardman-Vogt Tensor Product of Multicategories]\label{definition:BVtensor}
Suppose given small multicategories $\M$ and $\N$.  The \emph{tensor product} $\M \otimes \N$ is the multicategory defined as follows.  Its object set is $\Ob\M \times \Ob\N$.  For $c \in \Ob\M$ and $d \in \Ob\N$, the corresponding object in $\M \otimes \N$ is denoted $(c,d)$ or $c \otimes d$.

The operations in $\M \otimes \N$ are generated by the following operations for $c \in \Ob\M$, $d \in \Ob\N$, $\phi \in \M\scmap{\ang{c_i}_{i=1}^m;c'}$, and $\psi \in \N\scmap{\ang{d_j}_{j=1}^n;d'}$.
\[\begin{split}
\phi \otimes d & \in (\M \otimes \N)\mmap{(c',d);\ang{(c_i,d)}_{i=1}^m}\\
c \otimes \psi & \in (\M \otimes \N)\mmap{(c,d');\ang{(c,d_j)}_{j=1}^n}
\end{split}\]
These data are subject to relations \cref{it:BV-1,it:BV-2,it:BV-3,it:BV-4,it:BV-5,it:interchange-relation} below.
  \begin{enumerate}
  \item\label{it:BV-1} For $c \in \M$ and $d \in \N$, we have
    \[
      1_c \otimes d = 1_{(c,d)} = c \otimes 1_d.
    \]
  \item\label{it:BV-2} For operations $\phi, \phi_1, \ldots, \phi_n$  in $\M$ such that the
    composite below is defined, we have
    \[
      \ga\scmap{\phi \otimes d ; \phi_1 \otimes d, \ldots, \phi_n \otimes d} =
      \ga\scmap{\phi ; (\phi_1,\ldots, \phi_n)} \otimes d.
    \]
  \item\label{it:BV-3} For $\si \in \Si_m$, we have
    \[
      (\phi \otimes d)\cdot\si = (\phi\cdot\si) \otimes d.
    \]
  \item\label{it:BV-4} For operations $\psi, \psi_1, \ldots, \psi_m$ in $\N$ such that the
    composite below is defined, we have
    \[
      \ga\scmap{c \otimes \psi ; c \otimes \psi_1, \ldots, c \otimes \psi_m} =
      c \otimes \ga\scmap{\psi ; \psi_1, \ldots, \psi_m}.
    \]
  \item\label{it:BV-5} For $\si \in \Si_n$, we have
    \[
      (c \otimes \psi)\cdot\si = c \otimes (\psi\cdot\si).
    \]
  \item\label{it:interchange-relation} For operations $\phi \in \M\scmap{\ang{c_i}_{i=1}^m;c'}$ and $\psi \in \N\scmap{\ang{d_j}_{j=1}^n;d'}$, we have
\[\ga\scmap{c' \otimes \psi; \phi \otimes d_1, \ldots, \phi \otimes d_n} =
\ga\scmap{\phi \otimes d'; c_1 \otimes \psi, \ldots, c_m \otimes \psi} \cdot\xitimes\]  
in $(\M \otimes \N)\scmap{\angc \otimes \angd; (c',d')}$.  This is called the \emph{interchange relation}.  
\end{enumerate}  
This finishes the definition of $\M \otimes \N$.  Moreover, we define the operations
\begin{equation}\label{phi-tensor-psi}
\begin{split}
\phi \otimes \psi &= \ga\scmap{c' \otimes \psi; \phi \otimes d_1, \ldots, \phi \otimes d_n} \andspace\\
\phi \otimes^\transp \psi &= \ga\scmap{\phi \otimes d'; c_1 \otimes \psi, \ldots, c_m \otimes \psi},
\end{split}
\end{equation}
which are the two composite operations in relation \cref{it:interchange-relation} above.
\end{definition} 

\begin{explanation}\label{explanation:bvtensor}
  If we draw an operation as an arrow from its input profile to its output object, the interchange relation means that the two composites
  \[
  \begin{tikzpicture}[x=30mm,y=30mm]
    \draw[0cell] 
    (0,0) node (a) {\ang{c}\otimes\ang{d}}
    (1,0) node (b) {c'\times\ang{d}}
    (2,-.25) node (z) {(c',d')}
    (0,-.5) node (a') {\ang{c}\otimes^\transp\ang{d}}
    (1,-.5) node (b') {\ang{c}\times d'}
    ;
    \draw[1cell] 
    (a) edge node {\ang{\phi\otimes d_j}_{j=1}^n} (b)
    (a') edge['] node {\ang{c_i \otimes\psi}_{i=1}^m} (b')
    (b) edge node {c'\otimes\psi} (z)
    (b') edge['] node {\phi\otimes d'} (z)
    ; 
  \end{tikzpicture}
  \]
  correspond under the bijection
  \[
  \xitimes\cn
  (\M\otimes\N)\mmap{(c',d');\ang{c}\otimes^\transp\ang{d}} \fto{\iso} 
  (\M\otimes\N)\mmap{(c',d');\ang{c}\otimes\ang{d}}.
  \]
  A multifunctor
  \[
  H:\M \otimes \N \to \sfL
  \]
  consists of an assignment on objects $H(c,d) \in \Ob\sfL$ for
  $(c,d) \in \Ob\M \times \Ob\N$ such that each
  $H(c,-)$ and $H(-,d)$ is a multifunctor
  and such that we have
  \begin{equation}\label{eq:H-interchange-relation}
  H(\phi \otimes \psi) = H(\phi \otimes^\transp \psi)\cdot\xitimes
  \end{equation}
  for each $\phi \in \M\mmap{c';\ang{c}}$ and $\psi \in
  \N\mmap{d';\ang{d}}$.
\end{explanation}

\begin{theorem}[{\cite[5.7.14,~6.4.3]{cerberusIII}}]\label{theorem:Multicat-sm}
  The tensor product of \cref{definition:BVtensor} is a $\Cat$-enriched symmetric monoidal product for $\Multicat$.  Its monoidal unit is the initial operad $\I$ of \cref{definition:IT}.
\end{theorem}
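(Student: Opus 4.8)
The plan is to establish the result from the universal property that characterizes $\M \otimes \N$, rather than by direct manipulation of the generators-and-relations presentation in \cref{definition:BVtensor}. First I would prove that, for each triple of small multicategories $\M$, $\N$, $\sfL$, there is a bijection, natural in all three variables, between multifunctors $\M \otimes \N \to \sfL$ and \emph{bilinear multifunctors} $\M, \N \to \sfL$ --- by which I mean exactly the data described in \cref{explanation:bvtensor}: an object assignment $H(c,d)$ such that each $H(c,-)$ and each $H(-,d)$ is a multifunctor and the interchange relation \cref{eq:H-interchange-relation} holds. One direction restricts $H$ along the generators $\phi \otimes d$ and $c \otimes \psi$; the converse sends a bilinear pair to the multifunctor it determines on generators and checks that the relations of \cref{definition:BVtensor} are precisely those forced by the multicategory axioms together with \cref{eq:H-interchange-relation}, so that the assignment is well defined on all composites. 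The one genuinely technical point is the bookkeeping of symmetric group actions: one must verify that the block permutations appearing in the equivariance axioms and the indexing-swap permutations $\xitimes$ interact so that every iterated composite of generators can be reduced to a normal form indexed by a shuffle of $\M$- and $\N$-operations, which is what makes the relation set complete and shows that the quotient is small. I expect this normal-form analysis to be the main obstacle.

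Granting the universal property, the rest is formal. Bilinearity is visibly symmetric in $\M$ and $\N$ up to the indexing swap $\xitimes$, so $\M \otimes \N$ and $\N \otimes \M$ represent the same functor, giving a natural isomorphism whose square is the identity by the corresponding property of $\xitimes$, hence a symmetry. Trilinear multifunctors $\M, \N, \mathcal{P} \to \sfL$ are represented both by $(\M \otimes \N) \otimes \mathcal{P}$ and by $\M \otimes (\N \otimes \mathcal{P})$, yielding the associator; and a bilinear pair with first variable $\I$ --- which has a single object and only its colored unit as an operation, by \cref{definition:IT} --- is exactly a multifunctor in the second variable, so $\I \otimes \M \cong \M \cong \M \otimes \I$ with unit $\I$ as claimed. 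All coherence constraints (pentagon, the hexagons, triangle, and the unit axioms) then hold by uniqueness of representing objects: each side of each coherence diagram is induced by a map that is the identity on the underlying multilinear data, so the two sides coincide. The same argument produces the internal hom $\Multicat(\N, \sfL)$, whose objects are multifunctors $\N \to \sfL$ and whose multimorphisms are tuples of multinatural transformations composed via $\sfL$, establishing that the structure is closed.

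Finally, for the $\Cat$-enrichment I would upgrade the representability bijection to an isomorphism of \emph{categories}: on each side the morphisms are multinatural transformations, and a multinatural transformation between bilinear pairs $H \to H'$ corresponds to one $\M \otimes \N \to \sfL$ because the naturality diagram \cref{enr-multinat} for the tensor product is generated by the two slot-wise naturality diagrams. Functoriality of $\otimes$ on $2$-cells then follows by transporting the componentwise construction of multinatural transformations across this isomorphism, and the structure isomorphisms (symmetry, associator, units) are $\Cat$-natural because they arose from natural isomorphisms of $\Cat$-valued representable functors. Since every relevant diagram already commutes at the level of the unenriched structure, the enriched coherence axioms need no further verification, and we conclude that $(\Multicat, \otimes, \I)$ is a $\Cat$-enriched symmetric monoidal category.
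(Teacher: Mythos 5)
The paper does not prove \cref{theorem:Multicat-sm} at all: it is quoted from \cite[5.7.14, 6.4.3]{cerberusIII}, where the associator, symmetry, and unitors are constructed explicitly on generating operations and the coherence axioms are verified by direct computation. Your route through the universal property is a legitimate reorganization of the same mathematics and I believe it is sound. The bijection between multifunctors $\M \otimes \N \to \sfL$ and bilinear pairs is immediate from the generators-and-relations presentation (it is exactly what \cref{explanation:bvtensor} records), and once its two- and three-variable versions are in hand, the Yoneda-style derivation of the structure maps and of all coherence is clean; the advantage of your approach is that no coherence diagram is ever checked on elements, and the $\Cat$-enrichment comes along for free from the $2$-categorical form of the universal property. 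The cost is that the genuine content is concentrated in the step you only assert: that ``bilinear out of $(\M\otimes\N,\mathcal{P})$'' coincides with ``trilinear out of $(\M,\N,\mathcal{P})$''. That reduction requires showing that the interchange relation \cref{eq:H-interchange-relation} is closed under multicategorical composition and under the symmetric group actions in each slot, so that interchange against the generators of $\M\otimes\N$ forces interchange against all of its operations; this closure property, rather than a normal form per se, is where the real work lives. Two minor corrections: smallness of $\M\otimes\N$ is automatic, since it is a quotient of the free multicategory on a small set of generators, and does not require your normal-form analysis; and your description of the internal hom's multimorphisms as ``tuples of multinatural transformations'' is not accurate, although closedness is not part of the statement being proved.
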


\begin{explanation}\label{expl:multicat-cat-monoidal}
The $\Cat$-enrichment of the symmetric monoidal product for $\Multicat$ is defined as follows.  Given multifunctors
\[F \cn \M \to \M' \andspace G \cn \N \to \N',\]
we define a multifunctor
\[F \otimes G \cn \M \otimes \N \to \M' \otimes \N'\]
with assignment on generating operations given by
\[(F \otimes G)(\phi \otimes d) = F\phi \otimes Gd \andspace
(F \otimes G)(c \otimes \psi) = Fc \otimes G\psi.\]
Moreover, with $F$ and $G$ as above, for multinatural transformations
\[\theta \cn F \to F' \andspace \omega \cn G \to G',\]
we define the tensor product multinatural transformation
\[\theta \otimes \omega \cn F \otimes G \to F' \otimes G'\]
with components
\[(\theta \otimes \omega)_{(c,d)} = \theta_c \otimes \omega_d\]
for $(c,d) \in \Ob\M \times \Ob\N$.  The right-hand side of the above equality is defined as in \cref{phi-tensor-psi}.
\end{explanation}

The closed symmetric monoidal structure on $\Multicat$ induces a $\Cat$-enriched multicategory structure that we now describe.

\begin{definition}\label{definition:Multicat-enr}
  Let $\Multicat$ also denote the $\Cat$-enriched multicategory whose objects are small multicategories and whose category of $n$-ary operations, 
  \[
    \Multicat\scmap{\ang{\M};\N},
  \]
  consists of multifunctors
  \begin{equation}\label{eq:naryop}
    H,K \cn \bigotimes_{i=1}^n \M_i \to \N
  \end{equation} 
  and multinatural transformations $\theta\cn H \to K$.
  
  For a permutation $\si \in \Si_n$, the right action
  \begin{align*}
    \Multicat\scmap{\ang{\M};\N}
    & \fto[\iso]{\si} \Multicat\scmap{\ang{\M}\si;\N}\\
    H & \mapsto \xi_\si \circ H\\
    \theta & \mapsto \xi_\si * \theta
  \end{align*}
  is given by composition and whiskering with the permutation of tensor factors
  \[
    \bigotimes_{i=1}^n \M_{\si(i)} \fto[\iso]{\xi_\si} \bigotimes_{i=1}^n \M_i.
  \]
  
  Units are given by identity multifunctors.  The composition $\ga\scmap{H';\ang{H}}$ for multifunctors
  \begin{align*}
    H_i \cn & \bigotimes_{j = 1}^{n_i} \M_{i,j} \to \M'_i
              \forspace 1 \leq i \leq j, \andspace\\
    H' \cn & \bigotimes_{i = 1}^{n} \M'_{i} \to \M'',
  \end{align*} 
  is given by composition with the associativity isomorphism of $\otimes$
  \begin{equation}\label{eq:Multicat-comp}
    \bigotimes_{i,j}\M_{i,j} \fto[\iso]{\al}
    \bigotimes_{i=1}^n \bigotimes_{j = 1}^{n_i} \M_{i,j} 
    \fto{\otimes_i H_i}
    \bigotimes_{i = 1}^{n} \M'_{i} \fto{H'} \M''
  \end{equation} 
  where the first tensor product is the left normalized tensor product (\cref{explanation:vst-for-enrichment}) of the concatenation of the $\ang{\M_{i}} = \ang{\M_{i,j}}_{j=1}^{n_i}$.  The $\V$-multicategory axioms for $\V = \Cat$ follow from the enriched symmetric monoidal axioms for $\Multicat$.  See \cite[Section~6.3]{cerberusIII} for further details.
\end{definition}

\subsection*{Pointed Multicategories}

Recall from \cref{definition:IT} the terminal multicategory $\Mterm$.
\begin{definition}\label{definition:pointed-multicat}
  A \emph{pointed} multicategory is a multicategory $\M$ together with a distinguished multifunctor
  \[
  \iota \cn \Mterm \to \M
  \]
  called the \emph{basepoint} of $\M$.  Pointed multifunctors and multinatural transformations are those that commute with the basepoint morphisms.  The 2-category of small pointed multicategories, $\Multicat_*$, consists of small pointed multicategories, pointed multifunctors, and pointed multinatural transformations.
\end{definition}

The tensor product of small multicategories induces a smash product of small pointed multicategories.  For small pointed multicategories $\M$ and $\N$, the smash product $\M \sma \N$ is defined as the following pushout in $\Multicat$.
\begin{equation}\label{MsmashN}
\begin{tikzcd}
(\M \otimes \Mterm) \bincoprod (\Mterm \otimes \N) \ar{d} \ar{r} & \M \otimes \N \ar{d}{\varpi_{\M,\N}}\\
\Mterm \ar{r} & \M \sma \N
\end{tikzcd}
\end{equation}
In the diagram above, the left vertical arrow is the unique multifunctor to $\Mterm$.  The top horizontal arrow is induced by the basepoints of $\M$ and $\N$.  The basepoint of $\M \sma \N$ is the bottom horizontal arrow.  Thus $\M \sma \N$ has the same objects as $\M \otimes \N$, and the operations in $\M \sma \N$ are represented by those in $\M \otimes \N$, subject to basepoint conditions.

The smash product provides the $\Cat$-enriched multicategory structure for small permutative categories, which we will describe further in \cref{sec:permcat}.  Most details of the smash product for pointed multicategories will not be needed here, so we refer the reader to \cite[Chapters~4 and~5]{cerberusIII} for a complete treatment.  For our purposes we need only the following result.

\begin{theorem}[{\cite[6.4.4]{cerberusIII}}]\label{theorem:Multicat-ptd}
  The smash product of pointed multicategories is a $\Cat$-enriched symmetric monoidal product for $\Multicat_*$.  Its monoidal unit is the coproduct
  \[
  \Mtu_+ = \Mtu \bincoprod \Mterm.
  \]
\end{theorem}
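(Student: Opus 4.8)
The plan is to induce the symmetric monoidal structure on $\Multicat_*$ from the Boardman--Vogt tensor product $(\Multicat, \otimes, \Mtu)$ of \cref{theorem:Multicat-sm}, in the same way that the smash product of pointed spaces is induced from the cartesian product. The starting observation is that a pointed multicategory is precisely an object of the coslice 2-category $\Mterm/\Multicat$: a basepoint multifunctor $\iota_\M \cn \Mterm \to \M$ is an object under $\Mterm$, a pointed multifunctor is a morphism under $\Mterm$, and a pointed multinatural transformation is a 2-cell restricting to the identity of $\Mterm$. Since $\Multicat$ is cocomplete, so is $\Multicat_*$; the colimits we shall need are pushouts, which are computed in $\Multicat$ and carry the basepoint induced by the canonical arrow out of $\Mterm$.

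For pointed multicategories $\M$ and $\N$ I would define the smash product $\M \wedge \N$ to be the pushout, formed in $\Multicat$, of the span
\[
\Mterm \;\longleftarrow\; (\Mterm \otimes \N) \bincoprod (\M \otimes \Mterm) \;\longrightarrow\; \M \otimes \N,
\]
in which the right-hand leg is $(\iota_\M \otimes 1_\N) \bincoprod (1_\M \otimes \iota_\N)$ and the left-hand leg is the unique map to the terminal multicategory, pointed by the canonical arrow $\Mterm \to \M \wedge \N$. This is the formula ``$\M \wedge \N = (\M \otimes \N)$ modulo its basepoint axes''. It is visibly functorial in each variable, and the description of multifunctors out of a tensor product in \cref{explanation:bvtensor} shows that it has the expected universal property: a pointed multifunctor $\M \wedge \N \to \sfL$ is the same as a multifunctor $\M \otimes \N \to \sfL$ whose restrictions along $\iota_\M \otimes 1_\N$ and along $1_\M \otimes \iota_\N$ are both constant at the basepoint of $\sfL$.

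The associativity, unit, and symmetry constraints for $\wedge$ are then transported from $\otimes$. Because $\otimes$ is closed by \cref{theorem:Multicat-sm}, it preserves colimits in each variable, and hence preserves the pushouts defining $\wedge$; together with the associativity and symmetry of $\otimes$, and the fact that colimits commute with colimits, this identifies both $(\M \wedge \N) \wedge \sfL$ and $\M \wedge (\N \wedge \sfL)$ with the colimit of a single diagram built from $\M \otimes \N \otimes \sfL$ by collapsing to the basepoint every sub-multicategory in which some coordinate lies in the image of a basepoint, and identifies $\M \wedge \N$ with $\N \wedge \M$ via the symmetry $\xi$. For the unit, distributivity of $\otimes$ over coproducts reduces $\M \wedge \Mtu_+$ to a pushout that collapses a copy of $\M \otimes \Mterm$ to the basepoint and re-glues $\M$ to a fresh basepoint along its own basepoint inclusion, which returns $\M$. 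The associator, unitors, and braiding for $\wedge$ are the induced isomorphisms of these colimits; the pentagon, triangle, and hexagon axioms follow from the corresponding axioms for $\otimes$ because in each case the two sides are maps of colimits whose preimages in a tensor product already agree. Finally, the $\Cat$-enrichment is inherited from $\otimes$: the category of pointed multifunctors and pointed multinatural transformations between two pointed multicategories is a subcategory of the corresponding category in $\Multicat$, the pushouts defining $\wedge$ are 2-categorical colimits, and all the structure functors are strict.

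I expect the main obstacle to be the bookkeeping in the previous paragraph: one must check that ``collapsing a basepoint coordinate'' of an iterated tensor product can be carried out one coordinate at a time, in any order, compatibly with the associativity and symmetry isomorphisms of $\otimes$ and with the interchange relation of \cref{definition:BVtensor}. This is a diagram chase among iterated pushouts that is conceptually routine but notationally dense. The one substantive input it requires is that pushing out a pointed multicategory along its own basepoint inclusion $\iota_\M \cn \Mterm \to \M$ changes nothing --- the basepoint inclusion is idempotent for this pushout --- which is exactly what makes $\Mtu_+$, rather than the tensor unit $\Mtu$ (which is not even pointed), the strict unit for $\wedge$.
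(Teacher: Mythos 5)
The paper does not prove this statement: it is imported verbatim from \cite[6.4.4]{cerberusIII}, and the surrounding text explicitly defers all details of the smash product to Chapters~4 and~5 of that reference, so there is no in-paper argument to compare against. Your proposal is correct and reconstructs essentially the construction used there: $\M \wedge \N$ as the pushout of $\Mterm \leftarrow (\Mterm \otimes \N) \bincoprod (\M \otimes \Mterm) \to \M \otimes \N$, with associativity, symmetry, the unit $\Mtu_+$, and the $\Cat$-enrichment all transported from the closed structure of \cref{theorem:Multicat-sm}. In particular you correctly collapse the full sub-multicategories $\Mterm \otimes \N$ and $\M \otimes \Mterm$ rather than merely the images of the axes $\N$ and $\M$ (collapsing only the latter would not make $\Mtu_+$ a strict unit, since the operations $\iota_n \otimes d$ would survive), and you correctly isolate closedness of $\otimes$ as the input that makes the iterated-pushout computation for associativity succeed where, for instance, the smash product of arbitrary pointed spaces fails.
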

As with $\Multicat$, the smash product of pointed multicategories induces a $\Cat$-enriched multicategory of small pointed multicategories.
\begin{definition}\label{definition:Multicat-ptd}
  Let $\Multicat_*$ also denote the $\Cat$-enriched multicategory whose objects are small pointed multicategories and whose category of $n$-ary operations
  \[
    \Multicat_*\scmap{\ang{\M};\N} = \Multicat_*\big( \bigwedge_i \M_i, \N \big)
  \]
  consists of pointed multifunctors and pointed multinatural transformations out of an iterated smash product.  The symmetric group action, units, and composition are given by the $\Cat$-enriched symmetric monoidal structure of $(\Multicat_*,\sma, \Mtu_+)$.
\end{definition}

\section{The \texorpdfstring{$\Cat$}{Cat}-Multicategory of Permutative Categories}\label{sec:permcat}

In this section we define the $\Cat$-enriched multicategory of permutative categories, multilinear functors, and multinatural transformations.
For further discussion of plain, braided, and symmetric monoidal categories, we refer the reader to \cite{joyal-street,maclane,johnson-yau,cerberusI,cerberusII}.

\begin{definition}\label{definition:permutativecategory}
  A \emph{permutative category} $(\C,\oplus,e,\xi)$ consists of
  \begin{itemize}
  \item a category $\C$,
  \item a functor $\oplus \cn \C \times \C \to \C$, called the \emph{monoidal sum},
  \item an object $e \in \C$, called the \emph{monoidal unit}, and
  \item a natural isomorphism $\xi$ called the \emph{symmetry isomorphism} with components
    \[
    \xi_{X,Y} \cn X \oplus Y \to Y \oplus X
    \]
    for objects $X$ and $Y$ of $\C$.
  \end{itemize}
  The monoidal sum is required to be associative and unital, with $e$ as its unit.
  The symmetry isomorphism $\xi$ is required to make the following symmetry and hexagon diagrams commute for objects $X,Y,Z \in \C$. 
  \begin{equation}\label{symmoncatsymhexagon}
    \begin{tikzpicture}[xscale=3,yscale=1.2,vcenter]
      \tikzset{0cell/.append style={nodes={scale=.85}}}
      \tikzset{1cell/.append style={nodes={scale=.85}}}
      \def\h{.3}
      \draw[0cell] 
      (0,0) node (a) {X \oplus Y}
      (a)++(.8,0) node (c) {X \oplus Y}
      (a)++(.4,-1) node (b) {Y \oplus X}
      ;
      \draw[1cell] 
      (a) edge node {1_{X \oplus Y}} (c)
      (a) edge node [swap,pos=.3] {\xi_{X,Y}} (b)
      (b) edge node [swap,pos=.7] {\xi_{Y,X}} (c)
      ;
      \begin{scope}[shift={(2,.5)}]
        \draw[0cell] 
        (0,0) node (x11) {(Y \oplus X) \oplus Z}
        (x11)++(.7,0) node (x12) {Y \oplus (X \oplus Z)}
        (x11)++(-110:1.3) node (x21) {(X \oplus Y) \oplus Z}
        (x12)++(-70:1.3) node (x22) {Y \oplus (Z \oplus X)}
        (x21)++(-70:.7) node (x31) {X \oplus (Y \oplus Z)}
        (x22)++(-110:.7) node (x32) {(Y \oplus Z) \oplus X}
        ;
        \draw[1cell]
        (x21) edge node[pos=.25] {\xi_{X,Y} \oplus 1_Z} (x11)
        (x11) edge[equal] node {} (x12)
        (x12) edge node[pos=.75] {1_Y \oplus \xi_{X,Z}} (x22)
        (x21) edge[equal] node[swap,pos=.25] {} (x31)
        (x31) edge node {\xi_{X,Y \oplus Z}} (x32)
        (x32) edge[equal] node[swap,pos=.75] {} (x22)
        ;
      \end{scope}
    \end{tikzpicture}
  \end{equation}
  A permutative category is also called a \emph{strict symmetric monoidal category}.
  The strictness refers to the conditions that the monoidal sum be strictly associative and unital.
\end{definition}

\begin{definition}\label{definition:strictmonoidalfunctor}
  Suppose $\C$ and $\D$ are permutative categories.  A \emph{symmetric monoidal functor}
  \[
    (P,P^2,P^0) \cn \C \to \D
  \]
  consists of a functor $P \cn \C \to \D$ together with natural transformations
  \[
    PX \oplus PY \fto{P^2} P(X \oplus Y) \andspace
    e \fto{P^0} Pe
  \]
  for objects $X,Y \in \C$, called the \emph{monoidal constraint} and \emph{unit constraint}, respectively.  These data satisfy the following associativity, unity, and symmetry axioms.
  \begin{description}
  \item[Associativity] The following diagram is commutative for all objects $X,Y,Z \in \C$.
    \begin{equation}\label{eq:sm-monoidal}
      \begin{tikzpicture}[xscale=4.2,yscale=1.5,vcenter]
        \draw[0cell=.85] 
        (.2,0) node (a0) {
          \big(PX \oplus PY\big) \oplus PZ
        }
        (a0)++(3.8ex,0) node (b0) {
          PX \oplus \big(PY \oplus PZ\big)
        }
        (0,-1) node (a1) {
          P(X \oplus Y) \oplus PZ
        }
        (b0)+(.2,-1) node (b1) {
          PX \oplus P(Y \oplus Z)
        }
        (.2,-2) node (a2) {
          P\big((X \oplus Y) \oplus Z\big)
        }
        (b0)+(0,-2) node (b2) {
          P\big(X \oplus (Y \oplus Z)\big)
        }
        ;
        \draw[1cell=.85] 
        (a0) edge[equal] node {} (b0)
        (a2) edge[equal] node {} (b2)
        (a0) edge['] node {P^2 \oplus 1_{PZ}} (a1)
        (a1) edge['] node[pos=.3] {P^2} (a2)
        (b0) edge node {1_{PX} \oplus P^2} (b1)
        (b1) edge node[pos=.3] {P^2} (b2)
        ;
      \end{tikzpicture}
    \end{equation}

  \item[Unity] The following two diagrams are commutative for all objects $X \in \C$.
    \begin{equation}\label{eq:sm-unit}
      \begin{tikzpicture}[xscale=2.25,yscale=1.5,vcenter]
        \tikzset{0cell/.append style={nodes={scale=.85}}}
        \tikzset{1cell/.append style={nodes={scale=.85}}}
        \draw[0cell] 
        (.5,-.3) node (a) {e \oplus PX}
        (a)++(.5,0) node (b) {PX}
        (0,-1) node (c) {Pe \oplus PX}
        (1,-1) node (d) {P(e \oplus X)}
        ;
        \draw[1cell] 
        (a) edge[equal] node {} (b)
        (c) edge node {P^2} (d)
        (a) edge['] node {P^0 \oplus 1_{PX}} (c)
        (b) edge[equal] node {} (d)
        ;
      \end{tikzpicture}
      \andspace
      \begin{tikzpicture}[xscale=2.25,yscale=1.5,vcenter]
        \tikzset{0cell/.append style={nodes={scale=.85}}}
        \tikzset{1cell/.append style={nodes={scale=.85}}}
        \draw[0cell] 
        (.5,-.3) node (a) {PX \oplus e}
        (a)++(.5,0) node (b) {PX}
        (0,-1) node (c) {PX \oplus Pe}
        (1,-1) node (d) {P(X \oplus e)}
        ;
        \draw[1cell] 
        (a) edge[equal] node {} (b)
        (c) edge node {P^2} (d)
        (a) edge['] node {1_{PX} \oplus P^0} (c)
        (b) edge[equal] node {} (d)
        ;
      \end{tikzpicture}
    \end{equation}
  \item[Symmetry] The following diagram is commutative for all objects $X,Y \in \C$.
    \begin{equation}\label{eq:sm-symm}
      \begin{tikzpicture}[xscale=4,yscale=1.1,vcenter]
        \tikzset{0cell/.append style={nodes={scale=.85}}}
        \tikzset{1cell/.append style={nodes={scale=.85}}}
        \draw[0cell] 
        (0,0) node (a0) {PX \oplus PY}
        (1,0) node (b0) {PY \oplus PX}
        (0,-1) node (a1) {P(X \oplus Y)}
        (1,-1) node (b1) {P(Y \oplus X)}
        ;
        \draw[1cell] 
        (a0) edge node {\xi_{PX,PY}} (b0)
        (a1) edge node {P\xi_{X,Y}} (b1)
        (a0) edge['] node {P^2} (a1)
        (b0) edge node {P^2} (b1)
        ;
      \end{tikzpicture}
    \end{equation}
  \end{description}
  This finishes the definition of a symmetric monoidal functor.

  Composition of symmetric monoidal functors 
  \[
    \C \fto{P} \D \fto{Q} \E
  \]
  is given by composition of underlying functors with monoidal and unit constraints given, respectively, by
  \[
    (QP)^2 = \big( Q(P^2) \big) \circ Q^2
    \andspace
    (QP)^0 = \big( Q(P^0) \big) \circ Q^0.
  \]
  An identity functor is symmetric monoidal with identity monoidal and unit constraints.  A symmetric monoidal functor $P$ is called
  \begin{itemize}
  \item \emph{strong} if $P^0$ and $P^2$ are natural isomorphisms,
  \item \emph{strictly unital} if $P^0$ is the identity natural transformation, and
  \item \emph{strict} if both $P^0$ and $P^2$ are identities. \dqed
  \end{itemize}
\end{definition}

\begin{definition}\label{definition:monoidal-nt}
  Suppose $P,Q\cn \C \to \D$ are symmetric monoidal functors between permutative categories.
  A \emph{monoidal natural transformation}
  \[
  \theta\cn P \to Q
  \]
  is a natural transformation between the underlying functors such that the
  following unity and constraint compatibility diagrams commute for all $X,Y \in \C$.
  \begin{equation}\label{eq:monoidal-nt-constr}
    \begin{tikzpicture}[xscale=2,yscale=1.5,vcenter]
      \tikzset{0cell/.append style={nodes={scale=.85}}}
      \tikzset{1cell/.append style={nodes={scale=.85}}}
      \draw[0cell]
      (0,0) node (e) {e}
      (30:1) node (pe) {Pe}
      (-30:1) node (qe) {Qe}
      ;
      \draw[1cell]
      (e) edge node {P^0} (pe)
      (e) edge['] node {Q^0} (qe)
      (pe) edge node {\theta_e} (qe)
      ;
    \end{tikzpicture}
    \qquad
    \begin{tikzpicture}[xscale=4,yscale=1.5,vcenter]
      \tikzset{0cell/.append style={nodes={scale=.85}}}
      \tikzset{1cell/.append style={nodes={scale=.85}}}
      \draw[0cell] 
      (0,0) node (a) {PX \oplus PY}
      (1,0) node (b) {QX \oplus QY}
      (0,-1) node (a') {P(X \oplus Y)}
      (1,-1) node (b') {Q(X \oplus Y)}
      ;
      \draw[1cell] 
      (a) edge node {\theta_X \oplus \theta_Y} (b)
      (a') edge node {\theta_{X \oplus Y}} (b')
      (a) edge['] node {P^2} (a')
      (b) edge node {Q^2} (b')
      ;
    \end{tikzpicture}
  \end{equation}
  This finishes the definition of a monoidal natural transformation.  Identity and composites of monoidal natural transformations are given by those of the underlying natural transformations.
\end{definition}

\begin{definition}\label{definition:permcat}
  We let $\permcat$ denote the 2-category of small permutative categories, symmetric monoidal functors, and monoidal natural transformations.  We define the following sub-2-categories consisting of the same objects and restricted collections of functors.
  \begin{itemize}
  \item The 1-cells of $\permcatsu$ are strictly unital symmetric monoidal functors.
  \item The 1-cells of $\permcatsus$ are strictly unital strong symmetric monoidal functors.
  \item The 1-cells of $\permcatst$ are strict symmetric monoidal functors.
  \end{itemize}
  In each case the 2-cells consist of all monoidal natural transformations between the indicated 1-cells.
\end{definition}

\subsection*{Multilinear Functors and Transformations}

Now we recall the definitions of multilinear functors and transformations between them.
See \cite[Definition~3.2]{elmendorf-mandell} and \cite[Sections~6.5 and~6.6]{cerberusIII} for further details and discussion of these structures.
Throughout this section, suppose $\C_1, \ldots, \C_n$, and $\D$ are permutative categories.  

\begin{notation}\label{notation:compk}
Suppose 
\[\ang{x} = (x_1, \ldots, x_n)\]
is an $n$-tuple of symbols, and $x_k'$ is a symbol for $k \in \{1,\ldots,n\}$.  We denote by\label{not:compk}
\[\ang{x \compk x_k'} = \ang{x} \compk x_k' 
= \big(\underbracket[0.5pt]{x_1, \ldots, x_{k-1}}_{\text{empty if $k=1$}}, x_k', \underbracket[0.5pt]{x_{k+1}, \ldots, x_n}_{\text{empty if $k=n$}}\big)\]
the $n$-tuple obtained from $\ang{x}$ by replacing its $k$-th entry by $x_k'$.  Similarly, for $k \not= \ell \in \{1,\ldots,n\}$ and a symbol $x'_\ell$, we denote by
\[\ang{x \compk x_k' \comp_\ell x'_\ell} = \ang{x} \compk x_k' \comp_\ell x'_\ell\]
the $n$-tuple obtained from $\ang{x \compk x_k'}$ by replacing its $\ell$-th entry by $x'_\ell$.  We sometimes use the notation
\[\ang{x \compk x_k} = \angx\]
to emphasize the $k$-th term, $x_k$, in $\angx$.  See, for example, the first term in \cref{laxlinearityconstraints}.
\end{notation}

\begin{definition}\label{def:nlinearfunctor}
An \emph{$n$-linear functor}
\[\begin{tikzcd}[column sep=2.3cm]
\prod\limits_{j=1}^n \C_j \ar{r}{\big(P,\, \{P^2_j\}_{j=1}^n\big)} & \D
\end{tikzcd}\]
consists of
\begin{itemize}
\item a functor $P \cn \C_1 \times \cdots \times \C_n \to \D$ and
\item for each $j\in \{1,\ldots,n\}$, a natural transformation $P^2_j$, called the \emph{$j$-th linearity constraint}, with component morphisms
\begin{equation}\label{laxlinearityconstraints}
\begin{tikzcd}[column sep=large]
P\ang{X\compj X_j} \oplus P\ang{X\compj X_j'} \ar{r}{P^2_j}
& P\ang{X\compj (X_j \oplus X_j')} \in \D
\end{tikzcd}
\end{equation}
for objects $\ang{X} \in \txprod_j \C_j$ and $X_j' \in \C_j$.
\end{itemize}
These data are subject to the following five axioms.
\begin{description}
\item[Unity]
  For objects $\angX$ and morphisms $\angf$ in $\txprod_j \C_j$, the following object and morphism unity axioms hold for each $j \in \{1,\ldots,n\}$.
  \begin{equation}\label{nlinearunity}
    \left\{\begin{aligned}
        P \ang{X \compj e} &= e\\
        P \ang{f \compj 1_e} &= 1_e
      \end{aligned}\right.
  \end{equation}
\item[Constraint Unity]
  \begin{equation}\label{constraintunity}
    P^2_j = 1 \qquad \text{if any $X_i = e$ or if $X_j'=e$}.
  \end{equation}
\item[Constraint Associativity] The following diagram commutes for each $i\in \{1,\ldots,n\}$ and objects $\ang{X} \in \txprod_j
    \C_j$, with $X_i', X_i'' \in \C_i$.
    \begin{equation}\label{eq:ml-f2-assoc}
    \begin{tikzpicture}[x=55mm,y=15mm,vcenter]
    \tikzset{0cell/.append style={nodes={scale=.85}}}
    \tikzset{1cell/.append style={nodes={scale=.85}}}
      \draw[0cell] 
      (0,0) node (a) {
        P\ang{X\compi X_i}
        \oplus P\ang{X\compi X_i'}
        \oplus P\ang{X\compi X_i''}
      }
      (35ex,0) node (b) {
        P\ang{X\compi X_i}
        \oplus P\ang{X\compi (X_i' \oplus X_i'')}
      }
      (0,-1) node (c) {
        P\ang{X\compi (X_i \oplus X_i')}
        \oplus P\ang{X\compi X_i''}
      }
      (b)+(0,-1) node (d) {
        P\ang{X\compi (X_i \oplus X_i' \oplus X_i'')}
      }
      ;
      \draw[1cell] 
      (a) edge node {1 \oplus P^2_i} (b)
      (a) edge['] node {P^2_i \oplus 1} (c)
      (b) edge node {P^2_i} (d)
      (c) edge node {P^2_i} (d)
      ;
    \end{tikzpicture}
    \end{equation}
\item[Constraint Symmetry] The following diagram commutes
    for each $i\in \{1,\ldots,n\}$ and objects $\ang{X} \in \txprod_j \C_j$, with
    $X_i' \in \C_i$.
    \begin{equation}\label{eq:ml-f2-symm}
    \begin{tikzpicture}[x=40mm,y=15mm,vcenter]
\tikzset{0cell/.append style={nodes={scale=.85}}}
\tikzset{1cell/.append style={nodes={scale=.9}}}
      \draw[0cell] 
      (0,0) node (a) {
        P\ang{X\compi X_i}
        \oplus P\ang{X\compi X_i'}
      }
      (1,0) node (b) {
        P\ang{X\compi (X_i \oplus X_i')}
      }
      (0,-1) node (c) {
        P\ang{X\compi X_i'}
        \oplus P\ang{X\compi X_i}
      }
      (1,-1) node (d) {
        P\ang{X\compi (X_i' \oplus X_i)}
      }
      ;
      \draw[1cell] 
      (a) edge node {P^2_i} (b)
      (a) edge['] node {\xi} (c)
      (b) edge node {P\ang{1 \compi \xi}} (d)
      (c) edge node {P^2_i} (d)
      ;
    \end{tikzpicture}
    \end{equation}
  \item[Constraint 2-By-2] The following diagram commutes
    for each
    \[
    i \not= k\in \{1,\ldots,n\}, 
    \quad
    \ang{X} \in \txprod_j \C_j,
    \quad
    X_i' \in \C_i,
    \andspace
    X_k' \in \C_k.
    \]
\begin{equation}\label{eq:f2-2by2}
\begin{tikzpicture}[x=25ex,y=11ex,vcenter]
\tikzset{0cell-nomm/.style={commutative diagrams/every diagram,every cell,nodes={scale=.8}}}
\tikzset{1cell/.append style={nodes={scale=.8}}}
      \draw[0cell-nomm] 
      (0,0) node[align=left] (a) {$
        \phantom{\oplus}P\ang{X\compi X_i \compk X_k}
        \oplus P\ang{X\compi X_i' \compk X_k}$\\       
        $\oplus P\ang{X\compi X_i \compk X_k'}
        \oplus P\ang{X\compi X_i' \compk X_k'}
      $}
      (0,-1) node[align=left] (a') {$
        \phantom{\oplus}P\ang{X\compi X_i \compk X_k}
        \oplus P\ang{X\compi X_i \compk X_k'}$\\
        $\oplus P\ang{X\compi X_i' \compk X_k}
        \oplus P\ang{X\compi X_i' \compk X_k'}
      $}
      (.7,.75) node (b) {$
        P\ang{X\compi (X_i \oplus X_i') \compk X_k}
        \oplus P\ang{X\compi (X_i \oplus X_i') \compk X_k'} 
      $}
      (.7,-1.75) node (b') {$
        P\ang{X\compi X_i \compk (X_k \oplus X_k')}
        \oplus P\ang{X\compi X_i' \compk (X_k \oplus X_k')}
      $}
      (1.1,-.5) node (c) {$
        P\ang{X\compi (X_i \oplus X_i') \compk (X_k \oplus X_k')}
      $}
      ;
      \draw[1cell] 
      (a) edge node[pos=.1] {P^2_i \oplus P^2_i} (b)
      (b) edge[transform canvas={shift={(.1,0)}}] node {P^2_k} (c)
      (a) edge[transform canvas={shift={(.05,0)}}] node[swap] {1 \oplus \xi \oplus 1} (a')
      (a') edge['] node[pos=.1] {P^2_k \oplus P^2_k} (b')
      (b') edge[',transform canvas={shift={(.1,0)}}] node {P^2_i} (c)
      ;
\end{tikzpicture}
\end{equation}
\end{description}
This finishes the definition of an $n$-linear functor.  

Moreover, we define the following.
\begin{itemize}
\item If $n=0$, then a 0-linear functor is a choice of an object in $\D$.
\item An $n$-linear functor $(P, \{P^2_j\})$ is \emph{strong} if each linearity constraint $P^2_j$ is a natural isomorphism.  It is \emph{strict} if each $P^2_j$ is an identity natural transformation.
\item A \emph{multilinear functor} is an $n$-linear functor for some $n \geq 0$.
\end{itemize}
Below, we express the domain of a multilinear functor $\P$ either as a product $\binprod_j \C_j$ or as a tuple $\ang{\C}$. So, for example, we write
\[
  \binprod_j \C_j \fto{\P} \D  \orspace \ang{\C} \fto{\P} \D
\]
to denote that $\P$ is a multilinear functor from $\ang{\C}$ to $\D$.
\end{definition}

\begin{example}
  For permutative categories $\C$ and $\D$, the definition of a 1-linear functor from $\C$ to $\D$ consists of the same data and axioms as the definition of a strictly unital symmetric monoidal functor from $\C$ to $\D$.
\end{example}

\begin{definition}\label{def:nlineartransformation}
Suppose $P,Q$ are $n$-linear functors as displayed below.
\begin{equation}\label{nlineartransformation}
\begin{tikzpicture}[xscale=3,yscale=2.5,baseline={(a.base)}]
\tikzset{0cell/.append style={nodes={scale=1}}}
\tikzset{1cell/.append style={nodes={scale=1}}}
\draw[0cell]
(0,0) node (a) {\prod_{j=1}^n \C_j}
(a)++(1,0) node (b) {\D}
;
\draw[1cell]  
(a) edge[bend left] node[pos=.45] {\big( P, \{P^2_j\}\big)} (b)
(a) edge[bend right] node[swap,pos=.45] {\big( Q, \{Q^2_j\}\big)} (b)
;
\draw[2cell] 
node[between=a and b at .47, shift={(0,0)}, rotate=-90, 2label={above,\theta}] {\Rightarrow}
;
\end{tikzpicture}
\end{equation}
An \emph{$n$-linear transformation} $\theta \cn P \to Q$ is a natural transformation of underlying functors that satisfies the following two \emph{multilinearity conditions}.
\begin{description}
\item[Unity] 
\begin{equation}\label{niitransformationunity}
\theta_{\ang{X}} = 1_e \qquad \text{if any $X_i = e \in \C_i$}.
\end{equation}
\item[Constraint Compatibility] The diagram
\begin{equation}\label{eq:monoidal-in-each-variable}
\begin{tikzpicture}[x=45mm,y=15mm,vcenter]
\tikzset{0cell/.append style={nodes={scale=.9}}}
\tikzset{1cell/.append style={nodes={scale=.9}}}
      \draw[0cell] 
      (0,0) node (a) {
        P\ang{X\compi X_i}
        \oplus P\ang{X\compi X_i'}
      }
      (1,0) node (b) {
        P\ang{X\compi (X_i \oplus X_i')}
      }
      (0,-1) node (c) {
        Q\ang{X\compi X_i}
        \oplus Q\ang{X\compi X_i'}
      }
      (1,-1) node (d) {
        Q\ang{X\compi (X_i \oplus X_i')}
      }
      ;
      \draw[1cell] 
      (a) edge node {P^2_i} (b)
      (a) edge['] node {\theta \oplus \theta} (c)
      (b) edge node {\theta} (d)
      (c) edge node {Q^2_i} (d)
      ;
  \end{tikzpicture}
\end{equation}
commutes for each $i \in \{1,\ldots,n\}$, $\ang{X} \in \txprod_j \C_j$, and $X_i' \in \C_i$.
\end{description}
A \emph{multilinear transformation} is an $n$-linear transformation for some $n \geq 0$.  Identities and composition of multilinear transformations are given componentwise.
\end{definition}

\begin{example}
  The definition of 1-linear transformation between 1-linear functors coincides with that of monoidal natural transformation between corresponding strictly unital symmetric monoidal functors.
\end{example}

\begin{definition}\label{definition:permcatsus-homcat}
  We define the following categories of multilinear functors and transformations.
  \begin{itemize}
  \item Let $\permcatsu\scmap{\ang{\C};\D}$ denote the category of multilinear functors from $\ang{\C}$ to $\D$ and multilinear transformations between them.
  \item Let $\permcatsus\scmap{\ang{\C};\D}$ denote the full subcategory of strong multilinear functors.
  \item Let $\permcatst\scmap{\ang{\C};\D}$ denote the full subcategory of strict multilinear functors.
  \end{itemize}
  For each of these, the 1-linear case coincides with the notation of \cref{definition:permcat}.
\end{definition}

Now we define the rest of the multicategory data for $\permcat$.
\begin{definition}[Symmetric Group Action]\label{definition:permcat-action}
  Suppose given multilinear functors $P$ and $Q$ in $\permcat\mmap{\D; \ang{\C}}$ together with a multinatural transformation $\theta$ as displayed below.
\begin{equation}\label{permiicatcddata}
\begin{tikzpicture}[xscale=3,yscale=2.5,vcenter]
\draw[0cell=.9]
(0,0) node (a) {\prod_{j=1}^n \C_j}
(a)++(1,0) node (b) {\D}
;
\draw[1cell=.9]  
(a) edge[bend left] node[pos=.45] {(P, \{P^2_j\})} (b)
(a) edge[bend right] node[swap,pos=.45] {(Q, \{Q^2_j\})} (b)
;
\draw[2cell] 
node[between=a and b at .5, shift={(0,0)}, rotate=-90, 2label={below,\theta}] {\Rightarrow}
;
\end{tikzpicture}
\end{equation}
For a permutation $\sigma \in \Sigma_n$, the symmetric group action 
\begin{equation}\label{permiicatsymgroupaction}
\begin{tikzcd}[column sep=large]
\permcat\mmap{\D; \ang{\C}} \ar{r}{\sigma} & \permcat\mmap{\D; \ang{\C}\sigma}
\end{tikzcd}
\end{equation}
sends the data \cref{permiicatcddata} to the following composites and whiskerings, where $\xi_\si$ denotes the isomorphism given by permutation of terms in the product.
\begin{equation}\label{permiicatsigmaaction}
\begin{tikzpicture}[xscale=3,yscale=2.5,vcenter]
\draw[0cell=.9]
(0,0) node (a) {\prod_{j=1}^n \C_j}
(a)++(1,0) node (b) {\D}
(a)++(-.6,0) node (c) {\prod_{j=1}^n \C_{\sigma(j)}}
;
\draw[1cell=.9]  
(c) edge node {\xi_\sigma} (a)
(a) edge[bend left] node[pos=.45] {(P, \{P^2_j\})} (b)
(a) edge[bend right] node[swap,pos=.45] {(Q, \{Q^2_j\})} (b)
;
\draw[2cell] 
node[between=a and b at .5, shift={(0,0)}, rotate=-90, 2label={below,\theta}] {\Rightarrow}
;
\end{tikzpicture}
\end{equation}
The $j$-th linearity constraint of $P^\sigma = P \circ \xi_\sigma$ is the composite in $\D$
\begin{equation}\label{fsigmatwoj}
\begin{tikzcd}[column sep=large, row sep=small]
P^{\sigma} \ang{A} \oplus P^{\sigma} \ang{A \compj A_j'} \ar{r}{(P^{\sigma})^2_j} \ar[equal]{d} & P^{\sigma} \ang{A \compj (A_j \oplus A_j')} \ar[equal]{d}\\
P\big( \sigma\ang{A} \big) \oplus P\big( \sigma\ang{A} \comp_{\sigma(j)} A_j' \big) \ar{r}{P^2_{\sigma(j)}} & P\big( \sigma\ang{A} \comp_{\sigma(j)} (A_j \oplus A_j') \big)
\end{tikzcd}
\end{equation}
for objects 
\[\ang{A} = (A_1,\ldots,A_n) \in \prod_{j=1}^n \C_{\sigma(j)} \andspace A_j' \in \C_{\sigma(j)}.\]  
Note that if $P$ is strong, respectively strict, with each $P^2_j$ a natural isomorphism, respectively identity, then $P^\sigma$ is also strong, respectively strict.
\end{definition}

\begin{definition}[Composition]\label{definition:permcat-comp}
  Suppose given, for each $j \in \{1,\ldots,n\}$,
  \begin{itemize}
  \item permutative categories $\ang{\B_j} = \big(\B_{j,1}, \ldots, \B_{j,k_j}\big)$,
  \item objects $P'_j$ and $Q'_j$ and a 1-cell $\theta_j$ in $\permcat\mmap{\C_j;\ang{\B_j}}$ 
  \end{itemize} 
  as follows.
  \begin{equation}\label{permiicatbcdata}
    \begin{tikzpicture}[xscale=3.5,yscale=3,vcenter]
      \draw[0cell=.9]
      (0,0) node (a) {\prod_{i=1}^{k_j} \B_{j,i}}
      (a)++(1,0) node (b) {\C_j}
      ;
      \draw[1cell=.9]  
      (a) edge[bend left] node[pos=.45] {P'_j} (b)
      (a) edge[bend right] node[swap,pos=.45] {Q'_j} (b)
      ;
      \draw[2cell] 
      node[between=a and b at .5, shift={(0,0)}, rotate=-90, 2label={below,\theta_j}] {\Rightarrow}
      ;
    \end{tikzpicture}
  \end{equation}
  With $\ang{\B} = \big(\ang{\B_1}, \ldots, \ang{\B_n}\big)$,
  the multicategorical composition functor
  \begin{equation}\label{permiicatgamma}
    \begin{tikzcd}[cells={nodes={scale=.9}}]
      \permcat\mmap{\D;\ang{\C}} \times \prod\limits_{j=1}^n \permcat\mmap{\C_j;\ang{\B_j}} \ar{r}{\gamma}
      & \permcat\mmap{\D;\ang{\B}}
    \end{tikzcd}
  \end{equation}
  sends the data \cref{permiicatcddata,permiicatbcdata} to the composites
  \begin{equation}\label{compositemodification}
    \begin{tikzpicture}[xscale=6.5,yscale=4,vcenter]
      \draw[0cell=.9]
      (0,0) node (a) {\prod_{j=1}^n \prod_{i=1}^{k_j} \B_{j,i}}
      (a)++(1,0) node (b) {\D}
      ;
      \draw[1cell=.9]  
      (a) edge[bend left] node[pos=.45] {P \circ \txprod_j P'_j} (b)
      (a) edge[bend right] node[swap,pos=.45] {Q \circ \txprod_j Q'_j} (b)
      ;
      \draw[2cell=.9] 
      node[between=a and b at .5, shift={(0,0)}, rotate=-90, 2label={below,\theta \otimes (\textstyle\prod_j \theta_j)}] {\Longrightarrow}
      ;
    \end{tikzpicture}
  \end{equation}
  defined as follows.

  \begin{description}
  \item[Composite Multilinear Functor]
  Suppose given tuples of objects 
  \begin{equation}\label{angwj}
    \begin{split}
      \ang{W_j} &= (W_{j,1},\ldots,W_{j,k_j}) \in \prod_{i=1}^{k_j} \B_{j,i} \qquad \text{for $j \in \{1,\ldots,n\}$ and}\\
      \ang{W} &= \big(\ang{W_1}, \ldots, \ang{W_n}\big) \in \prod_{j=1}^n \prod_{i=1}^{k_j} \B_{j,i}.
    \end{split}
  \end{equation} 
  Then we have the object
  \begin{equation}\label{compositefangw}
      \textstyle \big(P \comp \prod_j P_j'\big) \ang{W} = P\big(P_1'\ang{W_1}, \ldots, P_n'\ang{W_n}\big) \inspace \D.
  \end{equation}

  To describe the linearity constraints of the composite $P \circ \prod_j P'_j$ in \cref{compositemodification}, in addition to the objects in \cref{angwj}, consider
  \begin{itemize}
  \item an object $W_{j,i}' \in \B_{j,i}$ for some choice of $(j,i)$ with $\ell = k_1 + \cdots + k_{j-1} + i$ and
  \item $\ang{P'W} = \big(P'_1\ang{W_1}, \ldots, P'_n\ang{W_n}\big) \in \prod_{j=1}^n \C_j$.
  \end{itemize} 
  Note that 
  \[\begin{split}
  \ang{W_j \compi W_{j,i}'} &= \big(\overbracket[.5pt]{W_{j,1},\ldots, W_{j,i-1}}^{\text{empty if $i=1$}},\, W_{j,i}', \overbracket[.5pt]{W_{j,i+1},\ldots, W_{j,k_j}}^{\text{empty if $i=k_j$}}\big)\\
  \ang{W_j \compi (W_{j,i} \oplus W_{j,i}')} &= \big(\underbracket[.5pt]{W_{j,1},\ldots, W_{j,i-1}}_{\text{empty if $i=1$}},\, W_{j,i} \oplus W_{j,i}', \underbracket[.5pt]{W_{j,i+1},\ldots, W_{j,k_j}}_{\text{empty if $i=k_j$}}\big).
  \end{split}\]
  The $\ell$-th linearity constraint $\big(P \circ \txprod_j P'_j\big)^2_\ell$ is defined as the following composite in $\D$.
  \begin{equation}\label{ffjlinearity}
    \begin{tikzpicture}[xscale=3,yscale=1,vcenter]
      \def\h{1} \def\v{-1}
      \draw[0cell=.75] 
      (0,0) node (x0) {P\ang{P'W} \oplus P\ang{P'W \compj P'_j\ang{W_j \compi W_{j,i}'}}}
      (x0)++(-\h,\v) node (x1) {(P \circ \txprod_j P'_j)\ang{W} \oplus (P \circ \txprod_j P'_j)\ang{W \comp_\ell W_{j,i}'}}
      (x1)++(1.3*\h,\v) node (x2) {P\ang{P'W \compj (P'_j\ang{W_j} \oplus P'_j\ang{W_j \compi W_{j,i}'})}} 
      (x1)++(0,2*\v) node (x3) {(P \circ \txprod_j P'_j)\ang{W \comp_\ell (W_{j,i} \oplus W_{j,i}')}}
      (x3)++(\h,\v) node (x4) {P\ang{P'W \compj P'_j\ang{W_j \compi (W_{j,i} \oplus W_{j,i}')}}}
      ;
      \draw[1cell=.75] 
      (x1) edge[-,double equal sign distance] (x0)
      (x0) edge node[pos=.7] {P^2_j} (x2)
      (x2) edge node[pos=.3] {P\ang{1 \compj (P'_j)^2_i}} (x4)
      (x1) edge node[swap] {\big(P \circ \txprod_j P'_j\big)^2_\ell} (x3)
      (x3) edge[-,double equal sign distance] (x4)
      ; 
    \end{tikzpicture}
  \end{equation}
  Note that if $P$ and each $P'_j$ are strong, respectively, strict, then each linearity constraint $\big(P \circ \txprod_j P'_j\big)^2_\ell$ is componentwise invertible, respectively, an identity, and the composite $P \circ \txprod_j P'_j$ is also strong, respectively, strict.

  \item[Composite Multinatural Transformation]
  The multinatural transformation $\theta \otimes \big(\prod_j \theta_j\big)$ in \cref{compositemodification} is the horizontal composite of the natural transformations $\prod_j \theta_j$ and $\theta$.
  The component morphism $\big(\theta \otimes (\prod_j \theta_j)\big)_{\angW}$ is the composite
  \begin{equation}\label{thetaprodthetaw}
    \begin{tikzpicture}[xscale=3.5,yscale=3.5,vcenter]
      \draw[0cell=.9]
      (0,0) node (a) {P \ang{P_j'\angWj}_j}
      (a)++(1.1,0) node (b) {P \ang{Q_j'\angWj}_j}
      (b)++(1,0) node (c) {Q \ang{Q_j'\angWj}_j}
      ;
      \draw[1cell=.9]  
      (a) edge node {P\ang{(\theta_j)_{\angWj}}_j} (b)
      (b) edge node {\theta_{\ang{Q_j'\angWj}_j}} (c)
      ;
    \end{tikzpicture}
  \end{equation}
  in $\D$.\dqed
  \end{description}
\end{definition}

The following construction of pointed multicategories from permutative categories leads to an alternative description of multilinearity via the smash product of pointed multicategories.
\begin{definition}\label{definition:EndC-basepoint}
  Suppose $\C$ is a small permutative category.
  The \emph{endomorphism multicategory} $\End(\C)$ is the small multicategory with object set $\Ob\C$ and with
  \[\End(\C)\mmap{Y;\ang{X}} = \C(X_1 \oplus \cdots \oplus X_n , Y)\]
  for $Y \in \Ob\C$ and $\ang{X}=(X_1,\cdots,X_n) \in (\Ob\C)^{\times n}$.  An empty $\oplus$ means the unit object $e$.

  The \emph{canonical basepoint} of $\End(\C)$ is determined by the multifunctor
  \[
  \Mterm \to \End(\C)
  \]
  sending the single object of $\Mterm$ to the monoidal unit $e$ in $\C$ and the $n$-ary operation $\iota_n$ to the identity morphism of
  \[
  \bigoplus_{i=1}^n e = e.\dqed
  \]

  Strictly unital monoidal functors, and monoidal natural transformations between them, induce pointed multifunctors and pointed multinatural transformations, respectively, on endomorphism multicategories.  This defines a 2-functor
  \[
    \End\cn \permcatsu \to \Multicat
  \]
  by \cite[5.3.6]{cerberusIII}.  Equipped with canonical basepoints, $\End$ takes values in $\Multicat_*$.
\end{definition}

The following result identifies multilinear functors and transformations between permutative categories with multifunctors and multinatural transformations of endomorphism categories.
\begin{proposition}[{\cite[6.5.10~and~6.5.13]{cerberusIII}}]\label{proposition:n-lin-equiv}
  For permutative categories $\C_1,\ldots,\C_n$, and $\D$, the 2-functor $\End$ induces an isomorphism of categories
  \begin{align*}\label{eq:multilin-via-sma}
    \End \cn \permcatsu\scmap{\ang{\C};\D} \fto{\iso}
    & \Multicat_*\scmap{\ang{\End(\C)};\End(\D)}\\
    & = \Multicat_*\big(\sma_{i} \End(\C_i),\End(\D)\big),
  \end{align*}
  between the category of $n$-linear functors and transformations $\ang{\C} \to \D$ and the category of pointed multifunctors and multinatural transformations $\sma_i \End(\C_i) \to \End(\D)$.
\end{proposition}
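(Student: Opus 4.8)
The plan is to establish the isomorphism at the level of objects (multilinear functors versus pointed multifunctors) and at the level of morphisms (multilinear transformations versus pointed multinatural transformations) by unwinding the universal property of the iterated smash product and matching the resulting data term-by-term against \cref{def:nlinearfunctor,def:nlineartransformation}. By \cref{definition:Multicat-ptd} the target category is by definition $\Multicat_*\big(\sma_i \End(\C_i), \End(\D)\big)$, so the first step is to describe concretely a pointed multifunctor out of an iterated smash product. Since $(\Multicat_*, \sma, \Mtu_+)$ is symmetric monoidal by \cref{theorem:Multicat-ptd}, the universal property of $\sma$---obtained from the binary case by induction on $n$ using associativity of $\sma$, as in \cite[Chapter~6]{cerberusIII}, and analogous to the unpointed statement of \cref{explanation:bvtensor}---shows that a pointed multifunctor $H \cn \sma_i \End(\C_i) \to \End(\D)$ is the same as an object assignment $\ang{X} \mapsto H\ang{X} \in \Ob\D$ together with, for each $j$ and each fixed tuple $\ang{X}$, a pointed multifunctor $\End(\C_j) \to \End(\D)$ with object function $X_j \mapsto H\ang{X \compj X_j}$, subject to pointedness (the basepoint operations of $\sma_i \End(\C_i)$ are sent to those of $\End(\D)$) and to the interchange relations \cref{eq:H-interchange-relation} for each pair $i \ne k$ of variables. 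A pointed multinatural transformation $H \to K$ unwinds similarly to a family of morphisms $\theta_{\ang{X}} \cn H\ang{X} \to K\ang{X}$ in $\D$ that is pointed and multinatural in each variable separately.

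Next I would construct the bijection on objects of the hom-categories. Given an $n$-linear functor $(P, \{P^2_j\})$, define $H = \End(P)$ by $H\ang{X} = P\ang{X}$ on objects and, in the $j$-th variable with the remaining inputs fixed at $\ang{X}$, by sending an operation $f \in \End(\C_j)\mmap{Y;\ang{Z}} = \C_j\big(\bigoplus_k Z_k, Y\big)$ to the composite $\bigoplus_k P\ang{X \compj Z_k} \to P\ang{X \compj Y}$ given by the iterate of $P^2_j$ followed by $P\ang{1 \compj f}$; this is the evident $n$-variable generalization of the $2$-functor $\End$ of \cref{definition:EndC-basepoint}. Conversely, from a pointed $H$ one extracts the underlying functor $P \cn \prod_j \C_j \to \D$ from the action of $H$ on unary operations (the partial functorialities in the separate variables assemble into a single functor because the interchange relations, applied to pairs of unary operations---for which $\xitimes$ is trivial---say precisely that the two orders of composition agree); one extracts the $j$-th linearity constraint $P^2_j$ as the value of $H$ on the binary operation $1_{X_j \oplus X_j'} \in \End(\C_j)\mmap{X_j \oplus X_j'; X_j, X_j'}$ in the $j$-th variable; and pointedness forces the unital data to be strict.

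The bulk of the verification is then axiom-matching. Pointedness of $H$ corresponds to the Unity \cref{nlinearunity} and Constraint Unity \cref{constraintunity} axioms. The multifunctor axioms for $H$ restricted to a single variable $j$---preservation of the symmetric group action and of multicategorical composition, applied to the instances where the operations are identities on iterated sums---correspond, respectively, to Constraint Symmetry \cref{eq:ml-f2-symm} and Constraint Associativity \cref{eq:ml-f2-assoc}, together with plain functoriality of $P$ in the $j$-th slot. The interchange relation relating variables $i$ and $k$ corresponds to the Constraint 2-By-2 axiom \cref{eq:f2-2by2}. For morphisms, pointedness of $\theta$ gives the unity condition \cref{niitransformationunity} and multinaturality in each variable gives the constraint compatibility \cref{eq:monoidal-in-each-variable}. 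Since identities and composition of multilinear transformations, and of pointed multinatural transformations, are both defined componentwise, the object-level and morphism-level bijections automatically respect them, so they assemble into the asserted isomorphism of categories; the case $n = 1$ recovers \cref{definition:EndC-basepoint} and the identification of a $1$-linear functor with a strictly unital symmetric monoidal functor.

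I expect the main obstacle to be the precise bookkeeping in the Constraint 2-By-2 versus interchange correspondence. One must track how the permutation $\xitimes$ of \cref{definition:xiotimes}, together with the left-normalized orders of the $\oplus$-summands on both sides of \cref{eq:f2-2by2}, is produced from $\phi \otimes \psi$ versus $\phi \otimes^\transp \psi$, and then confirm that the middle reshuffle $1 \oplus \xi \oplus 1$ in \cref{eq:f2-2by2} is exactly the image under $H$ of the indexing swap $\xitimes$. A secondary point requiring care is the passage from the binary universal property of $\sma$ to its $n$-ary form and, correspondingly, the inductive reorganization of the $n$-linear-functor axioms by variables; both are routine but notationally heavy, and this is where appealing to the general machinery of \cite[Chapter~6]{cerberusIII} for the $\Cat$-multicategory underlying a symmetric monoidal $\Cat$-category saves the most effort.
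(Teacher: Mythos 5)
The paper gives no proof of this proposition; it is quoted directly from \cite[6.5.10 and 6.5.13]{cerberusIII}, and your argument is a correct reconstruction of the standard proof given there: identify pointed multifunctors out of $\sma_i\End(\C_i)$ with separately-multifunctorial, basepoint-preserving assignments satisfying interchange, and then match the data and axioms against \cref{def:nlinearfunctor,def:nlineartransformation} exactly as you describe (object assignment, $P^2_j$ as the image of the binary identity operation $1_{X_j\oplus X_j'}$, unity from pointedness, associativity/symmetry from preservation of composition and the $\Sigma_2$-action in each variable, 2-by-2 from interchange). The only point worth making explicit in a full write-up is that the general interchange relation for arbitrary operations $\phi\otimes\psi$ reduces to the binary-identity case via the factorization of every operation in $\End(\C_j)$ as a canonical sum operation followed by a unary one, together with naturality of the $P^2_j$ --- you flag this bookkeeping as the main obstacle, and that is the right place to concentrate the care.
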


\begin{definition}\label{definition:permcat-multicat}
  Let $\permcatsu$ denote the $\Cat$-enriched multicategory whose category of $n$-ary operations
  \[
  \permcatsu\mmap{\D;\ang{\C}}
  \]
  is the category of $n$-linear functors and $n$-linear transformations
  \[
  \ang{\C} \to \D.
  \]
  The symmetric group action and composition in $\permcatsu$ are those of \cref{definition:permcat-action,definition:permcat-comp}, respectively.
  The multicategory axioms for $\permcatsu$ follow from \cref{proposition:n-lin-equiv} and the symmetric monoidal axioms of the smash product.
  Independently, a direct verification is given in \cite[Section~6.6]{cerberusIII}.

  We let $\permcatsus$ denote the $\Cat$-enriched sub-multicategory
  whose $n$-ary operations consist of strong $n$-linear functors and
  $n$-linear transformations.
\end{definition}

\begin{remark}
  The multicategory structure on $\permcatsu$ is not induced by a symmetric monoidal structure.
  For example, the unit for the smash product of multicategories is not a permutative category.
  See \cite[Propositions~5.7.23 and~10.2.17]{cerberusIII} for further discussion of this point.
\end{remark}

\begin{remark}[Strict Unity]\label{remark:permcatsu}
In the rest of this paper, we mainly work with $\permcatsu$ instead of other variants in \cref{definition:permcat,definition:permcatsus-homcat}.  We now briefly discuss the technical advantages of $\permcatsu$ over other variants.
\begin{enumerate}
\item\label{rk:permcatsu-i} For a symmetric monoidal functor $P \cn \C \to \D$ between small permutative categories, the endomorphism multifunctor
\[\End(P) \cn \End(\C) \to \End(\D)\]
is \emph{not} a pointed multifunctor in general, where $\End(\C)$ and $\End(\D)$ are equipped with the canonical basepoints given by their respective monoidal units.  The multifunctor $\End(P)$ is pointed precisely when $P$ is strictly unital.  The $\Cat$-multicategory structure on $\permcatsu$ is canonically induced by the one on $\Multicat_*$ via $\End$, as in \cref{proposition:n-lin-equiv}.  In this sense, $\permcatsu$ is more convenient than $\permcat$.
\item\label{rk:permcatsu-ii} As we recall in \cref{proposition:free-perm-functor} below, the free permutative category 2-functor $\bigf$ has codomain the 2-category $\permcatst$, with strict symmetric monoidal functors as 1-cells.  However, in order to extend $\bigf$ to a non-symmetric $\Cat$-multifunctor (\cref{theorem:F-multi}), we will need to precompose the assignment of $\bigf$ on 1-cells and 2-cells with a \emph{strong} multilinear functor $S$; see \cref{eq:FbarS}.  While $S$ (\cref{definition:S-multi}) is a strong multilinear functor, it is not strict because its linearity constraints, defined in \cref{eq:S2b}, are generally not identity morphisms.  Thus, the non-symmetric $\Cat$-multicategorical extension of $\bigf$ should not use the codomain $\permcatst$, since we must allow non-identity linearity constraints.
\item\label{rk:permcatsu-iii} Segal's $K$-theory functor \cite{segal}
\[\Kse \cn \permcatsu \to \SymSp\]
is most naturally defined on $\permcatsu$; see \cite[Chapter 8]{cerberusIII} for a detailed discussion of $\Kse$.  In \cref{sec:hty-thy-equiv} we will use $\Kse$ to define stable equivalences in $\permcatsu$ and $\Multicat$.   The functor $\Kse$ is a composite of three functors, the first of which, called Segal $J$-theory and denoted $\Jse$, lands in the category $\Gacat$ of $\Ga$-categories.  For a small permutative category $\C$, each level of the $\Ga$-category $\Jse\C$ is a category in which an object is a system of objects $\{C_s\}$ in $\C$ along with some gluing morphisms, satisfying several axioms.  Among these axioms is an object unity axiom---see \cite[(8.3.2)]{cerberusIII}---that says 
\[C_\emptyset = e,\]
the monoidal unit in $\C$.  For $P$ as in \cref{rk:permcatsu-i} above, in order for $P$ to induce a morphism of $\Ga$-categories
\[\Jse P \cn \Jse\C \to \Jse\D,\]
we need the condition
\[P(C_\emptyset) = Pe = e,\]
the monoidal unit in $\D$.  In other words, we need $P$ to be strictly unital.\defmark
\end{enumerate}
\end{remark}

\section{Free Permutative Category on a Multicategory}\label{sec:free-perm}

In this section we recall from \cite[Section~5]{johnson-yau-permmult} the free construction
\[
\bigf \cn \Multicat \to \permcatsu.
\]
In \cref{sec:cat-multi-f} we show that $\bigf$ is a non-symmetric $\Cat$-enriched multifunctor.
The definition of $\bigf$ makes use of sequences $\ang{x}$, indexing functions $f$, and permutations $\si^k_{g,f}$.
We make the following preliminary definitions.
\begin{definition}\label{definition:ufs}
  For a natural number $r \ge 0$ we let
  \[
    \ufs{r} = \{1, \ldots, r\}
  \]
  denote the finite set with $r$ elements.
\end{definition}
\begin{definition}\label{definition:free-perm-helper}
  Suppose $\M$ is a multicategory, and suppose $\ang{x}$ is a sequence of length $r$, with each $x_i \in \M$.
  Suppose
  \[
  f\cn \ufs{r} \to \ufs{s} \andspace g\cn \ufs{s} \to \ufs{t}
  \]
  are functions of finite sets, for $r,s,t \ge 0$.
  Then we define the following.
  \begin{itemize}
  \item For $j \in \ufs{s}$, let
    \begin{equation}\label{eq:x-finv}
      \ang{x}_{f^\inv(j)} = \ang{x_i}_{i \in f^\inv(j)}
    \end{equation}
    denote the sequence formed by those $x_i$ with $i \in f^\inv(j)$, ordered as in $\ang{x}$.
    Similarly, for a length-$s$ sequence of operations $\ang{\phi}$ in $\M$ and $k \in \ufs{t}$, let
    \[
    \ang{\phi}_{g^\inv(k)} = \ang{\phi_j}_{j \in g^\inv(k)}.
    \]
  \item For $k \in \ufs{t}$, let $\si^k_{g,f} \in \Si_t$ be the unique permutation such that
    \begin{equation}\label{eq:sigma-kgf}
      \bigg[
      \bigoplus_{j \in g^\inv(k)} \ang{x}_{f^\inv(j)}
      \bigg]
      \cdot \sigma^k_{g,f} 
      =
      \ang{x}_{(gf)^\inv(k)},
    \end{equation}
    where the sequence on the left hand side is the concatenation of sequences in the order specified by $g^\inv(k)$.
    We will use the action of these permutations on both objects and operations.
    \dqed
  \end{itemize}
\end{definition}

\begin{definition}\label{definition:free-perm}
  Suppose $\M$ is a multicategory.  Define a permutative category $\bigf\M$, called the \emph{free permutative category on $\M$}, as follows.
  \begin{description}
  \item[Objects] The objects of $\bigf\M$ are given by the $(\Ob\M)$-profiles: finite ordered sequences $\ang{x} = (x_1,\ldots,x_r)$ of objects of $\M$, with $r \ge 0$.
  \item[Morphisms] Given sequences $\ang{x}$ and $\ang{y}$ with lengths $r$ and $s$, respectively, the morphisms from $\ang{x}$ to $\ang{y}$ in $\bigf\M$ are given by pairs $(f,\ang{\phi})$ consisting of
    \begin{itemize}
    \item a function
      \[
      f \cn \ufs{r} \to \ufs{s}
      \]
      called the \emph{index map} and
    \item an ordered sequence of operations
      \[
      \ang{\phi} \withspace \phi_j \in \M\mmap{y_j;\ang{x}_{f^\inv(j)}}
      \]
      for $j \in \ufs{s}$, with $\ang{x}_{f^\inv(j)}$ defined in \cref{eq:x-finv}.
    \end{itemize}
    The identity morphism on $\ang{x}$ is given by $1_{\ufs{r}}$ and the tuple of unit operations $1_{x_i}$.

  \item[Composition] The composition of a pair of morphisms
    \[
    \ang{x} \fto{(f,\ang{\phi})} \ang{y} \fto{(g,\ang{\psi})} \ang{z}
    \]
    is the pair
    \begin{equation}\label{eq:FM-comp}
    \big(gf , \ang{\theta_k \cdot \si^k_{g,f}}_{k \in \ufs{t}} \big),
    \end{equation}
    where, for each $k \in \ufs{t}$,
    \begin{equation}\label{eq:thetak}
      \theta_k = \ga\lrscmap{\psi_k;\ang{\phi}_{g^\inv(k)}} \in \M\lrscmap{\;\bigoplus_{j \in g^\inv(k)} \ang{x}_{f^\inv(j)} ; z_k}.
    \end{equation}
    Note that the input profile for $\theta_k$ is the concatenation of $\ang{x}_{f^\inv(j)}$ for $j \in g^\inv(k)$.
    By definition \cref{eq:sigma-kgf}, the right action of $\si^k_{g,f}$ permutes this input profile to $\ang{x}_{(gf)^\inv(k)}$.
    Composition of morphisms is verified to be unital and associative in \cite[Proposition~5.7]{johnson-yau-permmult}.

  \item[Monoidal Sum] The monoidal sum 
    \[
      \oplus \cn \bigf\M \times \bigf\M \to \bigf\M
    \]
    is given on objects by concatenation of sequences.
    The monoidal sum of morphisms
    \[
      (f,\ang{\phi}) \cn \ang{x} \to \ang{y}
      \andspace
      (f',\ang{\phi'}) \cn \ang{x'} \to \ang{y'},
    \]
    is the pair
    \[
      \big(f \oplus f', \ang{\phi} \oplus \ang{\phi'}\big)
    \]
    where $f \oplus f'$ denotes the composite
    \[
      \ufs{r + r'} \iso \ufs{r} \bincoprod \ufs{r'} \fto{f \bincoprod f'} \ufs{s} \bincoprod \ufs{s'} \iso \ufs{s + s'}
    \]
    given by the disjoint union of $f$ with $f'$ and the canonical order-preserving isomorphisms.
    Functoriality of the monoidal sum follows because disjoint union of indexing functions preserves preimages and the operations in a composite \cref{eq:FM-comp} are determined elementwise for the indexing set of the codomain.

  \item[Monoidal Unit] The monoidal unit is the empty sequence $\ang{}$. 
    The unit and associativity isomorphisms for $\oplus$ are identities.  
    
  \item[Symmetry] The symmetry isomorphism for sequences $\ang{x}$ of length $r$ and $\ang{x'}$ of length $r'$ is 
    \begin{equation}\label{eq:FM-xi}
      \xi_{\ang{x},\ang{x'}} = \big(\tau_{r,r'} , \ang{1}\big)
    \end{equation} 
    where
    \[
      \tau_{r,r'} \cn
      \ufs{r + r'} \iso \ufs{r} \bincoprod \ufs{r'} \to \ufs{r'} \bincoprod \ufs{r}
      \iso \ufs{r' + r}
    \]
    is induced by the block-transposition of $\ufs{r}$ with $\ufs{r'}$, keeping the relative order within each block fixed.

    Concatenation of sequences is strictly associative and unital.
    The symmetry and hexagon axioms \cref{symmoncatsymhexagon} follow from the corresponding equalities of block permutations.
\end{description}
This completes the definition of $\bigf\M$.
\end{definition}

\begin{definition}\label{definition:free-smfun}
  Suppose $H\cn \M \to \N$ is a multifunctor.
  Define a strict symmetric monoidal functor
  \[
  \bigf H \cn \bigf\M \to \bigf\N
  \]
  via the following assignment on objects and morphisms.  For a sequence $\ang{x}$ of length $r$, define
  \[
  (\bigf H)\ang{x} = \ang{Hx_i}_{i \in \ufs{r}}.
  \]
  For a morphism $(f,\ang{\phi})$, define
  \begin{equation}\label{eq:FH-fphi}
  (\bigf H)(f,\ang{\phi}) = (f,\ang{H\phi_j}_{j}).
  \end{equation}

  The multifunctoriality of $H$ shows that this assignment is functorial on morphisms.
  Since the monoidal sum is defined by concatenation in $\bigf\M$ and $\bigf\N$, the functor $\bigf H$ is strict monoidal.
  Compatibility with the symmetry of $\bigf\M$ and $\bigf\N$ follows because $\bigf H$ preserves the index map of each morphism and $H$ preserves unit operations.
\end{definition}

\begin{definition}\label{definition:free-perm-multinat}
  Suppose $\kappa\cn H \to K \cn \M \to \N$ is a multinatural transformation.
  Define a monoidal natural transformation
  \[
  \bigf\kappa \cn \bigf H \to \bigf K
  \]
  via components
  \begin{equation}\label{eq:Fka-x}
  (\bigf\kappa)_{\ang{x}} = (1, \ang{\ka_{x_i}}_i) \cn \ang{Hx} \to \ang{Kx} 
  \end{equation}
  for each sequence $\ang{x}$ in $\bigf\M$.
  Naturality of $\bigf\kappa$ follows from multinaturality of $\kappa$ \cref{enr-multinat} because each $\si^j_{f,1}$ and $\si^j_{1,f}$ is an identity permutation and we have
  \begin{align*}
  (1,\ang{\ka_{y_j}}_j)(f,\ang{H\phi_j}_j)
    & = \bigg(f, \big\langle
      \ga\scmap{\ka_{y_j} ; H\phi_j}
    \big\rangle_{j} \bigg)\\
    & = \bigg(f, \big\langle
      \ga\scmap{K\phi_j ; \ang{\ka_{x_i}}_{i \in f^\inv(j)}}
    \big\rangle_j\bigg)\\
    & = (f,\ang{K\phi_j}_j)(1,\ang{\ka_{x_i}}_i)
  \end{align*}
  for each morphism $(f,\ang{\phi})\cn \ang{x} \to \ang{y}$ in $\bigf\M$.

  The monoidal naturality axioms \cref{eq:monoidal-nt-constr} for $\bigf\ka$ follow because the monoidal sum in $\bigf\N$ is given by concatenation of object and operation sequences.  The component $(\bigf\ka)_{\ang{}}$ is the identity morphism $(1_{\varnothing},\ang{})\cn \ang{} \to \ang{}$.
\end{definition}

\begin{proposition}[{\cite[Proposition~5.13]{johnson-yau-permmult}}]\label{proposition:free-perm-functor}
  The free permutative category construction given in \cref{definition:free-perm,definition:free-smfun} provides a 2-functor
  \[
  \bigf \cn \Multicat \to \permcatst.
  \]
\end{proposition}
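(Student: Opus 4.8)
The plan is to check that the three assignments collected in \cref{definition:free-perm,definition:free-smfun,definition:free-perm-multinat} --- $\M \mapsto \bigf\M$ on objects, $H \mapsto \bigf H$ on 1-cells, and $\kappa \mapsto \bigf\kappa$ on 2-cells --- are well defined and assemble into a 2-functor. Well-definedness is almost entirely recorded already: $\bigf\M$ is a permutative category by \cref{definition:free-perm}, with associativity and unitality of its composition being \cite[Proposition~5.7]{johnson-yau-permmult} and the symmetry and hexagon axioms reducing to identities of block permutations; $\bigf\kappa$ is a monoidal natural transformation by \cref{definition:free-perm-multinat}; and $\bigf H$ is a strict symmetric monoidal functor once one knows it is functorial on morphisms. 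So the substantive points are (i) functoriality of $\bigf H$ on the morphisms of $\bigf\M$, and (ii) the 2-functoriality equations: preservation of identity 1- and 2-cells, of composition of 1-cells, and of vertical and horizontal composition of 2-cells.

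For (i) I would unwind the composition formula \cref{eq:FM-comp}. Given composable morphisms $(f,\ang{\phi})$ and $(g,\ang{\psi})$ in $\bigf\M$, the composite has index map $gf$ and, in slot $k$, the operation $\ga\lrscmap{\psi_k;\ang{\phi}_{g^\inv(k)}} \cdot \si^k_{g,f}$ from \cref{eq:thetak}. Because $\bigf H$ leaves the index map untouched and the reindexing permutation $\si^k_{g,f}$ of \cref{eq:sigma-kgf} depends only on $f$ and $g$, applying $H$ in slot $k$ yields $H\big(\ga\lrscmap{\psi_k;\ang{\phi}_{g^\inv(k)}} \cdot \si^k_{g,f}\big)$, which by the symmetric group action axiom \cref{enr-multifunctor-equivariance} equals $H\big(\ga\lrscmap{\psi_k;\ang{\phi}_{g^\inv(k)}}\big)\cdot\si^k_{g,f}$ and then by the composition axiom \cref{v-multifunctor-composition} equals $\ga\lrscmap{H\psi_k;\ang{H\phi}_{g^\inv(k)}} \cdot \si^k_{g,f}$; this is exactly slot $k$ of the composite of $(\bigf H)(f,\ang{\phi})$ with $(\bigf H)(g,\ang{\psi})$ as computed from \cref{eq:FH-fphi}. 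Preservation of identity morphisms is immediate since $H$ preserves colored units. Preservation of identity 1-cells and of composition of 1-cells is then read off \cref{eq:FH-fphi}: $\bigf$ does nothing to index maps, and $(KH)\phi_j = K(H\phi_j)$, $(KH)x_i = K(Hx_i)$.

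For (ii) the key observation is that every 2-cell component produced by \cref{eq:Fka-x} carries the identity index map, so composing such components in $\bigf\N$ is cheap. Given $\kappa\cn H\to K$ and $\lambda\cn K\to L$ with vertical composite $\lambda\kappa$ having components $\ga\scmap{\lambda_c;\kappa_c}$ (see \cref{def:enr-multinatural-composition}), one composes $(\bigf\kappa)_{\ang{x}} = (1,\ang{\kappa_{x_i}}_i)$ with $(\bigf\lambda)_{\ang{x}} = (1,\ang{\lambda_{x_i}}_i)$ via \cref{eq:FM-comp}; here $f = g = 1$, so each $\si^k_{1,1}$ is the identity permutation by \cref{eq:sigma-kgf} (both sides of that equation being the length-one sequence $(x_k)$), and slot $k$ becomes $\ga\scmap{\lambda_{x_k};\kappa_{x_k}}$, which matches $\bigf(\lambda\kappa)_{\ang{x}}$. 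Since vertical composition of monoidal natural transformations is computed componentwise (\cref{definition:monoidal-nt}), this gives $\bigf(\lambda\kappa) = (\bigf\lambda)(\bigf\kappa)$, and $\bigf(1_H) = 1_{\bigf H}$ because the colored-unit components of $1_H$ yield the identity morphisms $(1,\ang{1_{Hx_i}}_i)$ of $(\bigf H)\ang{x}$. For horizontal composition I would reduce to whiskering via the interchange law: for a multifunctor $G$, left whiskering $\bigf(G\kappa) = (\bigf G)\ast(\bigf\kappa)$ holds because $(G\kappa)_c = G(\kappa_c)$ and \cref{eq:FH-fphi} commutes with applying $G$ entrywise, while right whiskering $\bigf(\kappa G) = (\bigf\kappa)\ast(\bigf G)$ holds because $(\kappa G)_d = \kappa_{Gd}$ and $\bigf\kappa$ is indexed over objects, which $\bigf G$ reindexes by $G$.

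The main obstacle is nothing deep but purely organizational: one must keep track of the index maps and the reindexing permutations $\si^k_{g,f}$ carefully enough to see that each 2-functoriality equation holds strictly rather than merely up to coherent isomorphism. The two facts that make this possible are the multifunctor axioms \cref{enr-multifunctor-equivariance,v-multifunctor-composition} for $H$ --- which say that $H$ commutes with the symmetric group action and with $\ga$, hence with every $\si^k_{g,f}$ and every composite $\theta_k$ of \cref{eq:thetak} --- together with the fact that all 2-cell components produced by $\bigf$ carry identity index maps, so the permutations $\si^k_{g,f}$ that might otherwise appear all degenerate to identities.
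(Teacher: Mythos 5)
Your proposal is correct and takes essentially the same route as the paper, which defers the formal statement to the cited reference but records exactly your key points---functoriality of $\bigf H$ via the equivariance and composition axioms of multifunctors, strict monoidality from concatenation, and the degeneration of the permutations $\si^k_{g,f}$ to identities when the index maps are identities---in the remarks embedded in \cref{definition:free-perm,definition:free-smfun,definition:free-perm-multinat}. Your fleshing out of the 2-cell compatibilities (vertical composition, whiskering, interchange) is a faithful completion of that sketch.
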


\begin{example}\label{example:free-terminal}
  For the terminal multicategory $\Mterm$, the free permutative category $\bigf\Mterm$ is isomorphic to the natural number category $\N$ whose objects are given by natural numbers and morphisms are given by morphisms of finite sets
  \[
  \N(r,s) = \Set(\ufs{r},\ufs{s}).
  \]
  The natural number $r \in \N$ corresponds to the length-$r$ sequence whose terms are the unique object of $\Mterm$.  Each morphism $f\cn \ufs{r} \to \ufs{s}$ corresponds to the morphism
  \[
  (f,\ang{\phi}) \in \bigf\Mterm
  \]
  where $\phi_j$ is the unique operation in $\Mterm$ of arity $|f^\inv(j)|$.
\end{example}

\begin{example}\label{example:free-Mtu}
  For the initial operad $\Mtu$, the free permutative category $\bigf\Mtu$ is isomorphic to the permutation category $\Si$ with objects given by natural numbers and morphisms given by permutations
  \[
  \Si(r,s) = \begin{cases}
    \Si_r, & \ifspace r = s,\\
    \varnothing, & \ifspace r \not= s
  \end{cases}
  \]
  for each pair of natural numbers $r$ and $s$.
\end{example}

\section{Assignment on Multimorphism Categories}\label{sec:multimorphism-assignment}

Throughout this section we suppose $n \ge 0$ and consider small multicategories $\M_1$, \ldots, $\M_n$, and $\N$.  The purpose of this section is to define the assignment on multimorphism categories
\[
\bigf\cn \Multicat\scmap{\ang{\M};\N} \to \permcatsu\scmap{\ang{\bigf\M};\bigf\N}.
\]
This assignment is provided by a strong $n$-linear functor of permutative categories
\[
S\cn \ang{\bigf\M} \to \bigf\big(\bigotimes_{i=1}^n \M_i\big)
\]
and the 2-functoriality of $\bigf$.  We will use the following notation in the definition of $S$ below, in the case $n > 0$.
\begin{definition}\label{definition:xonen}
  Suppose given objects
  \[
  \ang{x^i} \in \bigf\M_i \forspace 1 \leq i \leq n,
  \]
  where each $\ang{x^i} = (x^i_1,\ldots,x^i_{r_i})$.  So $\ang{x^i}$ has length $r_i$ and each $x^i_j$ is an object of $\M_i$.  For each $n$-tuple of indices $(\jonejn)$ with $1 \leq j_i \leq r_i$, define
  \[
  x^{\onen}_{\jonejn} = \big(x^1_{j_1}, x^2_{j_2}, \ldots, x^n_{j_n}\big) \in \bigotimes_{i=1}^n \M_i.
  \]
  Then define an object
  \[
  \ang{x^{\onen}} \in \bigf\big(\bigotimes_{i=1}^n \M_i \big)
  \]
  of length $r_{\onen}$, where
  \begin{align}
    r_{\onen} & = \prod_{i=1}^n r_i \andspace\\
    \ang{x^{\onen}} & = \Big\langle\cdots\Big\langle x^{\onen}_{\jonejn} \Big\rangle_{j_1=1}^{r_1} \cdots \Big\rangle_{j_n = 1}^{r_n}.\label{eq:xonen}
  \end{align}
  Using the tensor product of profiles from \cref{definition:xiotimes}, the tuple $\ang{x^\onen}$ from \cref{eq:xonen} is the iterated tensor product of the tuples $\ang{x^i}$, with the reverse lexicographic ordering in the subscripts.
\end{definition}

\begin{example}\label{ex:angxonen}
To illustrate \cref{definition:xonen}, consider the case with $n=3$ and the following objects.
\begin{equation}\label{xonextwoxthree}
\begin{split}
\ang{x^1} &= (x^1_1, x^1_2) \in \bigf \M_1\\
\ang{x^2} &= (x^2_1, x^2_2, x^2_3) \in \bigf \M_2\\
\ang{x^3} &= (x^3_1, x^3_2) \in \bigf \M_3
\end{split}
\end{equation}
For each triple of indices $(j_1,j_2,j_3)$ with $j_1 \in \{1,2\}$, $j_2 \in \{1,2,3\}$, and $j_3 \in \{1,2\}$, we have the object
\[x^{123}_{j_1,j_2,j_3} = \big(x^1_{j_1}, x^2_{j_2}, x^3_{j_3}\big) \in \M_1 \otimes \M_2 \otimes \M_3.\]
This object is obtained from the array \cref{xonextwoxthree} by forming a vertical product, using the $j_1$-th object in the first row, the $j_2$-th object in the second row, and the $j_3$-th object in the third row.  For example, for the indices $(j_1,j_2,j_3) = (2,3,1)$, we have the object
\[x^{123}_{231} = (x^1_2, x^2_3, x^3_1) \in \M_1 \otimes \M_2 \otimes \M_3.\]
The object in \cref{eq:xonen} 
\[\ang{x^{123}} \in \bigf(\M_1 \otimes \M_2 \otimes \M_3)\]
is the following sequence of $2 \cdot 3 \cdot 2 = 12$ objects, with each object in $\M_1 \otimes \M_2 \otimes \M_3$, read left to right in the first row and then the second row.
\[\begin{split}
&\overbracket[.5pt]{x^{123}_{111} \quad x^{123}_{211}}^{j_2 \,=\, 1} \quad 
\overbracket[.5pt]{x^{123}_{121} \quad x^{123}_{221}}^{j_2 \,=\, 2} \quad 
\overbracket[.5pt]{x^{123}_{131} \quad x^{123}_{231}}^{j_2 \,=\, 3} \qquad j_3=1\\
&\underbracket[.5pt]{x^{123}_{112} \quad x^{123}_{212}}_{j_2 \,=\, 1} \quad 
\underbracket[.5pt]{x^{123}_{122} \quad x^{123}_{222}}_{j_2 \,=\, 2} \quad 
\underbracket[.5pt]{x^{123}_{132} \quad x^{123}_{232}}_{j_2 \,=\,3} \qquad j_3=2
\end{split}\]
In the previous display, the two rows correspond to $j_3 = 1$ and 2, as indicated by the last subscripts.  In each row, the three pairs from left to right correspond to $j_2 = 1,2$, and 3, as indicated by the middle subscripts.  Within each pair, the two objects correspond to $j_1 =1$ and 2, as indicated by the first subscript. 
\end{example}

\begin{definition}\label{definition:phionen}
  Suppose given objects and morphisms
  \[
  (f^i,\ang{\phi^i}) \cn \ang{x^i} \to \ang{y^i} \inspace \bigf\M_i \forspace 1 \leq i \leq n,
  \]
  where
  \begin{itemize}
  \item each $\ang{x^i}$ has length $r_i$,
  \item each $\ang{y^i}$ has length $s_i$,
  \item each $f^i \cn \ufs{r_i} \to \ufs{s_i}$, and
  \item each $\phi^i_j \in \M_i\scmap{\ang{x^i}_{(f^i)^\inv(j)}; y^i_j}$.
  \end{itemize}
  Define $f^{\onen}$ as the composite below, where the unlabeled isomorphisms are given by the reverse lexicographic ordering of the products:
  \begin{equation}\label{eq:fonen}
  \begin{tikzpicture}[x=20mm,y=20mm,vcenter]
    \draw[0cell] 
    (0,0) node (a) {\ufs{r_\onen}}
    (1,0) node (b) {\binprod_i \ufs{r_i}}
    (2,0) node (c) {\binprod_i \ufs{s_i}}
    (3,0) node (d) {\ufs{s_\onen}.}
    ;
    \draw[1cell] 
    (a) edge node {\iso} (b)
    (c) edge node {\iso} (d)
    (b) edge node {\binprod_i f^i} (c)
    ;
    \draw[1cell]
    (a.-90) -- +(0,-4ex) -| node[xarrow,pos=.25] {f^\onen} (d.-90)
    ;
  \end{tikzpicture}
  \end{equation} 
  For each $n$-tuple of indices $(\konekn)$ with $1 \leq k_i \leq s_i$, let
  \[
  \ang{x^{\onen}}_{f;\,\konekn} = \Big\langle\cdots\Big\langle x^\onen_{\jonejn} \Big\rangle_{j_1 \in (f^1)^\inv(k_1)} \cdots \Big\rangle_{j_n \in (f^n)^\inv(k_n)}
  \]
  and define
  \begin{equation}\label{eq:phionenkonekn}
    \phi^{\onen}_{\konekn} \cn \ang{x^{\onen}}_{f;\,\konekn} \to y^{\onen}_{\konekn}
  \end{equation} 
  as the tensor product $\otimes_{i=1}^n \phi^i_{k_i}$.  Then define
  \begin{equation}\label{eq:phionen}
    \ang{\phi^{\onen}} = \Big\langle\cdots\Big\langle \phi^\onen_{\konekn} \Big\rangle_{k_1 = 1}^{s_1} \cdots \Big\rangle_{k_n = 1}^{s_n}.
  \end{equation}
  This defines a morphism
  \[
    (f^\onen,\ang{\phi^\onen}) \cn \ang{x^\onen} \to \ang{y^\onen}
  \]
  in $\bigf\big(\bigotimes_{i=1}^n\M_i\big)$.
\end{definition}

Recall the initial operad $\Mtu$ (\cref{definition:IT}), which is the monoidal unit for the tensor product of small multicategories (\cref{theorem:Multicat-sm}).  Also recall from \cref{example:free-Mtu} that $\bigf(\Mtu)$ is the permutation category $\Si$.
\begin{definition}[Multilinear $S$]\label{definition:S-multi}
  Suppose $n \ge 0$ and suppose given small multicategories $\M_1$, \ldots, $\M_n$.  Define an $n$-linear functor
  \[
    (S,S^2_b)\cn \prod_{i=1}^n \bigf\M_i \to \bigf\big( \bigotimes_{i=1}^n \M_i \big)
  \]
  as follows.

  For $n = 0$, we define $S$ by choice of object $1 \in \Sigma$.  For $n > 0$, we make the following definitions.  Suppose given objects and morphisms
  \[
    (f^i,\ang{\phi^i}) \cn \ang{x^i} \to \ang{y^i} \inspace \bigf\M_i \forspace 1 \leq i \leq n,
  \]
  as in \cref{definition:phionen}.
  \begin{description}
  \item[Underlying Functor]
    The underlying functor $S$ is given by the following assignments, using \cref{eq:xonen,eq:fonen,eq:phionen}:
    \begin{align}
      S\big(\ang{x^1},\ldots,\ang{x^n}\big)
      & = \ang{x^\onen} \andspace\label{eq:Sxi}\\
      S\big((f^1,\ang{\phi^1}),\ldots,(f^n,\ang{\phi^n})\big)
      & = \big(f^\onen,\ang{\phi^\onen}\big).\label{eq:Sfi}
    \end{align}

  \item[Linearity Constraints]
    Suppose $1 \leq b \leq n$ and suppose $\ang{\hat{x}^b}$ is an object in $\bigf\M_b$ with length $\hat{r}_b$.  Let
    \[
      \tilde{r}_b = r_b + \hat{r}_b \andspace \ang{\tilde{x}^b} = \ang{x^b} \oplus \ang{\hat{x}^b}.
    \]
    Then define
    \[
      \ang{\hat{x}^\onen} \andspace \ang{\tilde{x}^\onen}
    \]
    as in \cref{eq:xonen}, using $\ang{\hat{x}^b}$ and $\ang{\tilde{x}^b}$, respectively, in place of $\ang{x^b}$.

    The $b$th linearity constraint, $S^2_b$, is defined by components
    \begin{equation}\label{eq:S2b}
      S^2_b = (\rho_{r_b,\hat{r}_b}, \ang{1}) \cn \ang{x^\onen} \oplus \ang{\hat{x}^\onen} \to \ang{\tilde{x}^\onen}
    \end{equation} 
    where $\ang{1}$ is the tuple of identity operations and $\rho_{r_b,\hat{r}_b}$ is the unique permutation of entries determined by the source and target of $S^2_b$.  Naturality of $S^2_b$ with respect to morphisms in $\prod_i \bigf\M_i$ follows from the uniqueness of $\rho_{r_b,\hat{r}_b}$.
  \end{description}
  This finishes the definition of $S$ as an assignment on objects and morphisms, and the definition of natural transformations $S^2_b$.
  We verify that $S$ is functorial in \cref{proposition:functoriality-S}.  We verify the multilinearity axioms of \cref{def:nlinearfunctor} in \cref{proposition:multilinearity-S}.
\end{definition}

\begin{proposition}\label{proposition:functoriality-S}
  In the context of \cref{definition:S-multi}, $S$ is a functor.
\end{proposition}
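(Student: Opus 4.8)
The plan is to verify the two functor axioms for the underlying functor $S$ of \cref{definition:S-multi} directly from the formulas there and in \cref{definition:phionen}. The cases $n \le 1$ are immediate, since for $n = 0$ the source $\prod_{i=1}^0 \bigf\M_i$ is the terminal category and for $n = 1$ the functor $S$ is essentially an identity, so assume $n \ge 2$. For identities, the identity on $\ang{x^i}$ in $\bigf\M_i$ is $(1_{\ufs{r_i}}, \ang{1_{x^i_j}}_j)$; feeding each $f^i = 1_{\ufs{r_i}}$ into the definition of $f^\onen$ returns $1_{\ufs{r_\onen}}$, because $\prod_i 1_{\ufs{r_i}}$ is an identity and the two reverse-lexicographic bijections in \cref{definition:phionen} are mutually inverse. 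For each index tuple $(\konekn)$ the operation $\phi^\onen_{\konekn}$ is the left-normalized iterated tensor product of the unit operations $1_{x^i_{k_i}}$, which by relation~\cref{it:BV-1} of the Boardman--Vogt tensor product equals the unit operation $1_{x^\onen_{\konekn}}$; hence $\ang{\phi^\onen} = \ang{1}$ and $S$ sends the tuple of identities to $(1_{\ufs{r_\onen}}, \ang{1}) = 1_{\ang{x^\onen}}$.

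For composites, fix morphisms $(f^i,\ang{\phi^i}) \cn \ang{x^i} \to \ang{y^i}$ and $(g^i,\ang{\psi^i}) \cn \ang{y^i} \to \ang{z^i}$ in the $\bigf\M_i$, with $\ang{x^i}, \ang{y^i}, \ang{z^i}$ of lengths $r_i, s_i, t_i$, and let $(g^i f^i, \ang{\theta^i_k \cdot \si^k_{g^i,f^i}}_k)$, with $\theta^i_k = \ga\scmap{\psi^i_k; \ang{\phi^i}_{(g^i)^\inv(k)}}$, be their composite as in \cref{eq:FM-comp,eq:thetak}. I must show that $S$ carries this to the composite $(g^\onen, \ang{\psi^\onen}) \circ (f^\onen, \ang{\phi^\onen})$ in $\bigf\big(\bigotimes_i \M_i\big)$. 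The index maps coincide: the index map of $S$ of the composite is formed from $(g^i f^i)_i$ in the same way that $f^\onen$ is formed from $(f^i)_i$, and this equals $g^\onen f^\onen$ because $\prod_i(-)$ is functorial and the intervening bijections $\ufs{s_\onen} \cong \prod_i \ufs{s_i} \cong \ufs{s_\onen}$ cancel. For the operation tuples, fix $K = (\konekn) \in \prod_i \ufs{t_i}$: the $K$-component of $S$ of the composite is the left-normalized iterated tensor product $\bigotimes_i\big(\theta^i_{k_i} \cdot \si^{k_i}_{g^i,f^i}\big)$, while the $K$-component of the composite in $\bigf\big(\bigotimes_i\M_i\big)$ is $\Theta_K \cdot \si^K_{g^\onen,f^\onen}$ with $\Theta_K = \ga\scmap{\psi^\onen_K ; \ang{\phi^\onen}_{(g^\onen)^\inv(K)}}$ and $\psi^\onen_K = \bigotimes_i \psi^i_{k_i}$.

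To match these operations I would use two manipulations inside $\bigotimes_i\M_i$. First, the symmetric-group relations~\cref{it:BV-3,it:BV-5} together with the equivariance axioms~\cref{enr-operadic-eq-1,enr-operadic-eq-2} for $\ga$ allow one to pull the permutations $\si^{k_i}_{g^i,f^i}$ out of the iterated tensor product, rewriting $\bigotimes_i\big(\theta^i_{k_i} \cdot \si^{k_i}_{g^i,f^i}\big)$ as $\big(\bigotimes_i \theta^i_{k_i}\big)$ post-composed with an explicit block permutation. Second, iterating the interchange relation~\cref{it:interchange-relation} with relations~\cref{it:BV-2,it:BV-4} and the associativity axiom~\cref{enr-multicategory-associativity} for $\ga$ rewrites $\bigotimes_i \ga\scmap{\psi^i_{k_i}; \ang{\phi^i}_{(g^i)^\inv(k_i)}}$ as $\ga$ applied to $\bigotimes_i\psi^i_{k_i} = \psi^\onen_K$ and to a reordering of $\ang{\phi^\onen}_{(g^\onen)^\inv(K)}$, again up to an explicit block permutation. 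Combining the two, the $K$-component of $S$ of the composite equals $\Theta_K$ post-composed with a single permutation; since that permutation and $\si^K_{g^\onen,f^\onen}$ both reorder the concatenated $\bigotimes_i\M_i$-profile $\bigoplus_{J \in (g^\onen)^\inv(K)} \ang{x^\onen}_{(f^\onen)^\inv(J)}$ into $\ang{x^\onen}_{(g^\onen f^\onen)^\inv(K)}$, the uniqueness clause in~\cref{eq:sigma-kgf} forces them to agree, which completes the verification.

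The genuinely delicate step is this last identification of permutations: one must check that the block permutation accumulated from the interchange relation, from the equivariance of $\ga$, and from the reverse-lexicographic reindexing bijections really does have the stated effect on profiles. I expect the cleanest route is to settle the binary case $n = 2$ by an explicit index chase and then induct on $n$, the inductive step amounting to coherence for the iterated tensor product $\bigotimes_i\M_i$ together with standard bookkeeping for block permutations.
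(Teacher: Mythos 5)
Your setup is the same as the paper's: you reduce functoriality on composites to the single identity
$\otimes_i(\theta^i_{\ell_i}\cdot\si^{\ell_i}_{g^i,f^i})
= \ga\scmap{\psi^\onen_{\elloneelln};\ang{\phi^\onen}_{(g^\onen)^\inv(\elloneelln)}}\cdot\si^{\elloneelln}_{g^\onen,f^\onen}$
componentwise, and you correctly identify the tools (equivariance of $\ga$, relations \cref{it:BV-2}--\cref{it:BV-5}, and the interchange relation \cref{it:interchange-relation}). But the decisive step --- showing that the block permutation you accumulate from these manipulations actually equals $\si^{\elloneelln}_{g^\onen,f^\onen}$ --- is not carried out; you defer it to an ``explicit index chase for $n=2$ plus induction,'' which is precisely the combinatorial content of the proposition. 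There is also a small flaw in the way you propose to close that step: two permutations having the same effect on the object profile $\bigoplus_{J}\ang{x^\onen}_{(f^\onen)^\inv(J)}$ need not be equal when entries repeat, so the ``uniqueness clause'' of \cref{eq:sigma-kgf} can only be invoked after you have matched the permutations as permutations of \emph{index positions}, which again requires the index chase you are postponing. As written, the proof is therefore incomplete.

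The paper sidesteps all of this bookkeeping with one observation you are missing: since the domain of $S$ is the Cartesian product $\prod_i\bigf\M_i$, every morphism factors as a composite of morphisms that are non-identity in a single coordinate, so it suffices to verify the displayed identity when $(f^i,\ang{\phi^i})$ is an identity for $i\neq a$ and $(g^i,\ang{\psi^i})$ is an identity for $i\neq b$. In that restricted situation the permutations essentially disappear: if $a=b$ the identity is exactly relations \cref{it:BV-2}--\cref{it:BV-5} of the Boardman--Vogt tensor product applied in the $a$-th factor; if $a<b$ all permutations in sight are identities; and if $a>b$ the only nontrivial permutation $\si^{\elloneelln}_{g^\onen,f^\onen}$ is a single instance of $\xi^\otimes$, so the identity is precisely the interchange relation \cref{it:interchange-relation}. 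I would recommend adopting this reduction rather than attempting the general index chase.
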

\begin{proof}
  If each $(f^i,\ang{\phi^i})$ is an identity, then so are $f^\onen$ and each $\ang{\phi^\onen}$.  Therefore $S$ preserves identities.
  
  Suppose given composable morphisms
  \[
    \ang{x^i} \fto{(f^i,\ang{\phi^i})} \ang{y^i}
    \fto{(g^i, \ang{\psi^i})} \ang{z^i}
    \inspace \bigf\M_i
  \]
  for $1 \leq i \leq n$.  Then by \cref{eq:FM-comp} and \cref{eq:thetak} we have
  \[
    (g^i, \ang{\psi^i}) \circ (f^i, \ang{\phi^i}) =
    (g^i f^i, \ang{\theta^i_{\ell_i} \cdot \sigma^{\ell_i}_{g^i,f^i}}_{\ell_i}),
    \withspace
    \theta^i_{\ell_i} = \ga\scmap{\psi^i_{\ell_i}; \ang{\phi^i}_{(g^i)^\inv(\ell_i)}}.
  \]
  Applying $S$ to the tuple of these composites, we have
  \[
    S\big( \binprod_i (g^if^i, \ang{\theta^i_{\ell_i} \cdot \sigma^{\ell_i}_{g^i,f^i}}_{\ell_i} ) \big) =
    \big( h^{\onen}, \omega^{\onen} \big),
  \]
  where $h^i = g^i f^i$ and
  \[
    \om^{\onen}_{\elloneelln} = \otimes_i \big(\theta^i_{\ell_i} \cdot \si^{\ell_i}_{g^i,f^i}\big).
  \]

  Alternatively, applying $S$ and then composing results in the following:
  \begin{align*}
    S\big( \binprod_i (g^i, \ang{\psi^i}) \big) \circ
    S\big( \binprod_i (f^i, \ang{\phi^i}) \big)
    & = \big( g^\onen, \ang{\psi^\onen} \big) \circ \big( f^\onen, \ang{\phi^\onen} \big)\\
    & = \Big( g^\onen f^\onen, \ang{
      \pi^\onen_{\elloneelln} \cdot \si^{\elloneelln}_{g^\onen,\, f^\onen}
      }_{(\elloneelln)}
      \Big)
  \end{align*}
  where
  \[
    \pi^\onen_{\elloneelln}
    = \ga\scmap{\psi^\onen_{\elloneelln}; \ang{\phi^\onen}_{(g^\onen)^\inv\,(\elloneelln)}}
  \]
  and $\si^\elloneelln_{g^\onen,\,f^\onen}$ is the permutation as in \cref{eq:sigma-kgf} corresponding to the index $(\elloneelln)$.

  Functoriality of the Cartesian product for maps of sets implies that $h^{\onen} = g^\onen f^\onen$.  To show that $S$ is functorial, it remains to show
  \begin{equation}\label{eq:S-func-elloneelln}
    \otimes_i \big(\theta^i_{\ell_i} \cdot \si^{\ell_i}_{g^i, f^i}\big)
    =
    \ga\scmap{\psi^\onen_{\elloneelln}; \ang{\phi^\onen}_{(g^\onen)^\inv\,(\elloneelln)}}
    \cdot
    \si^{\elloneelln}_{g^\onen,\, f^\onen}
  \end{equation} 
  for each index $(\elloneelln)$.

  Since the domain of $S$ is a Cartesian product, it suffices to verify \cref{eq:S-func-elloneelln} in the cases where $(f^i,\ang{\phi^i})$ is an identity for $i \not= a$ and $(g^{i},\ang{\phi^{i}})$ is an identity for $i \not= b$, for some $1 \leq a \leq n$ and $1 \leq b \leq n$.

  If $a = b$, then \cref{eq:S-func-elloneelln} follows from relations among operations in the tensor product, \cref{definition:BVtensor}~\cref{it:BV-2} through \cref{it:BV-5}.
  If $a \not= b$ then the permutations $\si^{\ell_i}_{g^i,f^i}$ on the left hand side of \cref{eq:S-func-elloneelln} are identities.  For $a < b$, the permutation $\si^{\elloneelln}_{g^\onen,\,f^\onen}$ on the right hand side of \cref{eq:S-func-elloneelln} is also an identity and there is nothing to check.  
  For $a > b$, the permutation $\si^{\elloneelln}_{g^\onen,\,f^\onen}$ is an instance of $\xi^\otimes$ and the equality holds by \cref{definition:BVtensor}~\cref{it:interchange-relation}.
\end{proof}

\begin{proposition}\label{proposition:multilinearity-S}
  In the context of \cref{definition:S-multi}, $(S,S^2_b)$ is a strong multilinear functor.
\end{proposition}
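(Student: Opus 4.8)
The plan is to establish functoriality of the underlying functor $S$ — which is \cref{proposition:functoriality-S} — and then to check the five multilinearity axioms of \cref{def:nlinearfunctor}: the object and morphism unity axioms \cref{nlinearunity}, the constraint unity axiom \cref{constraintunity}, the constraint associativity axiom \cref{eq:ml-f2-assoc}, the constraint symmetry axiom \cref{eq:ml-f2-symm}, and the constraint 2-by-2 axiom \cref{eq:f2-2by2}. Strongness will then follow from the explicit form of the linearity constraints.

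First I would isolate the structural fact that makes every verification routine. Call a morphism of $\bigf(\bigotimes_i \M_i)$ \emph{unit-type} if it has the form $(g, \ang{1})$ with $g$ a bijection of finite sets and $\ang{1}$ the tuple of $\M$-unit operations. Each constraint $S^2_b = (\rho_{r_b,\hat{r}_b}, \ang{1})$ is unit-type; so are the monoidal sums $1 \oplus S^2_b$ and $S^2_b \oplus 1$, the symmetry isomorphisms of $\bigf(\bigotimes_i \M_i)$ appearing in \cref{eq:ml-f2-symm,eq:f2-2by2} — which by \cref{definition:free-perm} have the form $(\tau, \ang{1})$ — and the image under $S$ of a tuple consisting of identities and a single symmetry of one factor $\bigf\M_i$, using that an iterated tensor product of unit operations is a unit operation by \cref{definition:BVtensor}~\cref{it:BV-1}. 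Because a bijection has singleton preimages, each permutation $\si^k_{g,f}$ of \cref{definition:free-perm-helper} is an identity when $f$ is a bijection, and then the composition formula \cref{eq:FM-comp} together with the multicategory unit axioms shows that the composite of two unit-type morphisms is again unit-type, with underlying bijection the composite of the two bijections; likewise the monoidal sum of unit-type morphisms is computed on underlying bijections. Therefore any diagram in $\bigf(\bigotimes_i\M_i)$ built from unit-type morphisms commutes if and only if the corresponding diagram of bijections commutes.

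Granting this reduction, the unity axioms \cref{nlinearunity} hold because $\ang{x^i} = \ang{}$ for some $i$ forces $r_\onen = \prod_i r_i = 0$, hence $\ang{x^\onen} = \ang{}$ and $S$ carries the relevant identity to the identity of $\ang{}$; and the constraint unity axiom \cref{constraintunity} holds because if some $\ang{x^i}$ or $\ang{\hat{x}^b}$ is empty then the source and target of $S^2_b$ agree and $\rho_{r_b,\hat{r}_b}$ is an identity. For the constraint associativity \cref{eq:ml-f2-assoc}, constraint symmetry \cref{eq:ml-f2-symm}, and constraint 2-by-2 \cref{eq:f2-2by2} axioms, the reduction replaces each diagram by an identity among the block-shuffle and block-transposition permutations induced by the tensor-of-profiles indexing of \cref{definition:xonen,definition:phionen}; these are precisely the combinatorial identities expressing strict associativity of concatenation and the symmetry and hexagon axioms \cref{symmoncatsymhexagon} for the monoidal structure of $\bigf$ built in \cref{definition:free-perm}. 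Finally $S$ is strong because each $S^2_b = (\rho_{r_b,\hat{r}_b}, \ang{1})$ is invertible with inverse $(\rho_{r_b,\hat{r}_b}^\inv, \ang{1})$, so every linearity constraint is a natural isomorphism.

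I expect the constraint 2-by-2 axiom \cref{eq:f2-2by2} to be the main obstacle: it is the only axiom that simultaneously involves two distinct variable slots $b \neq k$, so it requires tracking how the two block shuffles $\rho_{r_b,\hat{r}_b}$ and $\rho_{r_k,\hat{r}_k}$ interact, much as the interchange relation \cref{it:interchange-relation} does in the proof of \cref{proposition:functoriality-S}. By the reduction above, however, this is a finite permutation identity with no operation-level content, and it follows from the corresponding identity of block permutations.
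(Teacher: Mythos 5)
Your proof is correct and follows essentially the same route as the paper's: unity from the vanishing of empty products, constraint unity from $\rho_{0,\hat{r}_b}$ and $\rho_{r_b,0}$ being identities, the remaining three constraint axioms from the fact that every morphism involved is a permutation paired with unit operations (what the paper compresses into ``uniqueness of the permutations $\rho_{r_b,\hat{r}_b}$''), and strongness from invertibility of permutations. Your explicit unit-type-morphism reduction is a helpful elaboration of that one-line appeal; the only quibble is that $\sigma^k_{g,f}$ is forced to be trivial because $g$ (rather than $f$) has singleton preimages, though in your application both index maps are bijections, so the conclusion stands.
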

\begin{proof}
  Functoriality of $S$ is verified in \cref{proposition:functoriality-S}.  Now we verify the multilinearity axioms of \cref{def:nlinearfunctor}.  For $n = 0$ there is nothing to check.  For $n > 0$, first note that, if any $\ang{x^i}$ is the empty tuple, then so is $\ang{x^\onen}$.  Similarly, if any $(f^i,\ang{\phi^i})$ is equal to
  \[
    \big( \varnothing\cn \ufs{0} \to \ufs{0}, \ang{} \big)\cn \ang{} \to \ang{},
  \]
  the identity morphism of the empty tuple, then $f^\onen$ is the empty morphism and $\ang{\phi^\onen}$ is also empty.  Thus $S$ satisfies the unity axiom \cref{nlinearunity} of \cref{def:nlinearfunctor}.

  The constraint unity axiom \cref{constraintunity} for $S^2_b$ holds because the permutations $\rho_{0,\hat{r}_b}$ and $\rho_{r_b,0}$ are identities.  The other three constraint axioms of \cref{def:nlinearfunctor} for $S^2_b$ follow from uniqueness of the permutations $\rho_{r_b,\hat{r}_b}$.  Since the components $S^2_b$ are determined by permutations, $S$ is a strong multilinear functor.
\end{proof}

\begin{remark}\label{remark:rho}
  In the context of \cref{definition:S-multi}, for the case $b = n$, the permutations $\rho_{r_n,\hat{r}_n}$ are identities for any $r_n$ and $\hat{r}_n$.  In particular, if $n=1$ then $S$ is the identity monoidal functor.  For $n > 1$ and $b < n$, the permutations $\rho_{r_b,\hat{r}_b}$ are generally nontrivial.
\end{remark}

\begin{lemma}\label{lemma:S-nat}
  The multilinear functors $S$ are 2-natural with respect to multifunctors and multinatural transformations
  \[
    \begin{tikzpicture}[x=40mm,y=20mm]
      \draw[0cell] 
      (0,0) node (mi) {\M_i}
      (1,0) node (ni) {\N_i.}
      ;
      \draw[1cell] 
      (mi) edge[bend left] node {H_i} (ni)
      (mi) edge[',bend right] node {K_i} (ni)
      ;
      \draw[2cell] 
      node[between=mi and ni at .5, rotate=-90, 2label={above,\theta_i}] {\Rightarrow}
      ;
    \end{tikzpicture}
  \]
\end{lemma}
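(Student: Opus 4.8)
The plan is to unwind the explicit formulas for $S$, $\bigf H$, and $\bigf\theta$ on objects, morphisms, and linearity constraints, and to check that the two sides of each required equality coincide entrywise.  Spelled out, the claimed 2-naturality of the multilinear functors $S_{\ang{\M}}$ consists of two statements.  For multifunctors $H_i \cn \M_i \to \N_i$, the two multilinear functors
\[
  \bigf\big(\textstyle\bigotimes_{i=1}^n H_i\big) \circ S_{\ang{\M}}
  \andspace
  \ga\big( S_{\ang{\N}} ; \bigf H_1, \ldots, \bigf H_n \big)
\]
from $\prod_i \bigf\M_i$ to $\bigf\big(\bigotimes_i \N_i\big)$ are equal, where $\bigotimes_i H_i$ denotes the image of $\ang{H}$ under the functorial tensor product of \cref{definition:BVtensor}.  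For multinatural transformations $\theta_i \cn H_i \to K_i$, the multilinear transformation obtained by whiskering $\bigf\big(\bigotimes_i \theta_i\big)$ with $S_{\ang{\M}}$ equals the one obtained by whiskering $S_{\ang{\N}}$ with $\bigf\theta_1, \ldots, \bigf\theta_n$.  The only substantive input is the elementary fact, recorded in \cref{explanation:bvtensor} and \cite[Chapters~5 and~6]{cerberusIII}, that $\bigotimes_i H_i$ carries a left-normalized iterated tensor product of operations $\phi^1, \ldots, \phi^n$ to the iterated tensor product of $H_1\phi^1, \ldots, H_n\phi^n$, and similarly that $\bigotimes_i \theta_i$ has component operations given by iterated tensor products of the components of the $\theta_i$; the remainder is bookkeeping.

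For the equality of 1-cells I would compare objects, morphisms, and constraints separately.  On objects, $S_{\ang{\M}}$ sends $(\ang{x^1}, \ldots, \ang{x^n})$ to the left-normalized iterated tensor product $\ang{x^\onen}$ of \cref{eq:xonen}, whose entries are the tuples $(x^1_{j_1}, \ldots, x^n_{j_n})$; applying $\bigf(\bigotimes_i H_i)$ replaces each such entry by $(H_1 x^1_{j_1}, \ldots, H_n x^n_{j_n})$, which is exactly the object produced by first applying each $\bigf H_i$ and then $S_{\ang{\N}}$.  On morphisms, the index map $f^\onen$ of \cref{definition:phionen} is assembled from the index maps $f^i$ by the reverse-lexicographic reindexings, which $\bigf H$ leaves unchanged, and each entry $\phi^\onen_{\konekn}$ is a left-normalized iterated tensor product of the $\phi^i_{k_i}$, which $\bigotimes_i H_i$ carries to the iterated tensor product of the $H_i\phi^i_{k_i}$; so the underlying functors of the two composites agree.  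For the linearity constraints, $S^2_b = (\rho_{r_b, \hat{r}_b}, \ang{1})$ is a permutation depending only on the profile lengths $r_b$, $\hat{r}_b$ together with the tuple of identity operations: the $b$th constraint of the first composite is $\bigf(\bigotimes_i H_i)$ applied to $S^2_b$, which is again $(\rho_{r_b, \hat{r}_b}, \ang{1})$ since $\bigotimes_i H_i$ preserves colored units, and the $b$th constraint of the second composite is, by \cref{ffjlinearity} with the strict 1-linear functors $\bigf H_i$, the $b$th constraint of $S_{\ang{\N}}$ evaluated at the images, which is $(\rho_{r_b, \hat{r}_b}, \ang{1})$ since $\bigf H_b$ preserves profile lengths.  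Hence the naturality square commutes.

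For the equality of 2-cells I would evaluate both whiskered multilinear transformations at an arbitrary tuple $(\ang{x^1}, \ldots, \ang{x^n})$ of objects.  By \cref{eq:Fka-x}, the component of $\bigf(\bigotimes_i \theta_i)$ at $\ang{x^\onen}$ is the pair whose index map is the identity and whose operation indexed by $(\jonejn)$ is $(\bigotimes_i \theta_i)_{(x^1_{j_1}, \ldots, x^n_{j_n})} = \theta_{1, x^1_{j_1}} \otimes \cdots \otimes \theta_{n, x^n_{j_n}}$.  On the other side, by \cref{thetaprodthetaw} the component of $\ga(S_{\ang{\N}}; \bigf\theta_1, \ldots, \bigf\theta_n)$ at the same tuple is $S_{\ang{\N}}$ applied to the tuple of morphisms $(\bigf\theta_i)_{\ang{x^i}} = (1_{\ufs{r_i}}, \ang{\theta_{i, x^i_k}}_k)$; since each index map here is an identity, \cref{eq:Sfi} returns the pair with identity index map and operation $\theta_{1, x^1_{j_1}} \otimes \cdots \otimes \theta_{n, x^n_{j_n}}$ indexed by $(\jonejn)$, the same morphism.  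The sources and targets of the two 2-cells coincide by the equality of 1-cells applied to $\ang{H}$ and to $\ang{K}$, which completes the verification.

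The step I expect to require the most care is the identification of $\bigotimes_i H_i$ and $\bigotimes_i \theta_i$ on iterated tensor products of operations and components; this is precisely the reindexing bookkeeping, governed by the generating relations of \cref{definition:BVtensor}, that was already carried out in the proof of \cref{proposition:functoriality-S}, and it is no harder in the present setting.  Everything else is a direct substitution of the definitions of $S$ (\cref{definition:S-multi}), $\bigf H$ (\cref{definition:free-smfun}), and $\bigf\theta$ (\cref{definition:free-perm-multinat}).
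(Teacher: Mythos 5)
Your proposal is correct and follows essentially the same route as the paper's proof: both verify the naturality square for 1-cells by comparing the two composites entrywise on objects, morphisms, and linearity constraints (using that $\bigf(\otimes_i H_i)$ acts componentwise and preserves units, so both constraints reduce to $(\rho_{r_b,\hat r_b},\ang{1})$), and then check the 2-cell equality by evaluating components at a tuple of objects. Your write-up merely makes explicit the "similar analysis" that the paper leaves to the reader for the multinatural transformations.
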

\begin{proof}
  First we verify that the following diagram of permutative categories and multilinear functors commutes.
  \begin{equation}\label{eq:S-nat}
    \begin{tikzpicture}[x=40mm,y=20mm,vcenter]
      \draw[0cell] 
      (0,0) node (a) {\prod_i \bigf\M_i}
      (0,-1) node (b) {\bigf\big(\bigotimes_i \M_i \big)}
      (1,0) node (a') {\prod_i \bigf\N_i}
      (1,-1) node (b') {\bigf\big(\bigotimes_i \N_i \big)}
      ;
      \draw[1cell] 
      (a) edge node {\binprod_i \bigf H_i} (a')
      (b) edge node {\bigf(\otimes_i H_i)} (b')
      (a) edge['] node {S} (b)
      (a') edge node {S} (b')
      ;
    \end{tikzpicture}
  \end{equation}
  Commutativity on objects and morphisms follows because, by \cref{definition:free-smfun}, the strict monoidal functor $\bigf(\otimes_i H_i)$ is given by applying $\otimes_i H_i$ componentwise to tuples of objects and operations.  Therefore, both composites around the diagram above are given on objects and morphisms by the assignments
  \begin{align*}
    \big(\ang{x^1}, \ldots, \ang{x^n}\big) 
    & \mapsto \ang{(Hx)^{\onen}}\\
    \big((f^1,\ang{\phi^1}), \ldots, (f^n, \ang{\phi^n})\big)
    & \mapsto (f^{\onen}, \ang{(H\phi)^{\onen}}),
  \end{align*}
  where
  \begin{align*}
    (Hx)^\onen_\jonejn & = \otimes_i (H_ix^i_{j_i}) \andspace\\
    (H\phi)^\onen_{\konekn} & = \otimes_i (H_i \phi^i_{k_i}).
  \end{align*}
  Because $\bigf(\otimes_i H_i)$ is strict monoidal, the $b$th linearity constraint of both composites around the diagram is given by
  \[
    (\rho_{r_b,\hat{r}_b}, \ang{1}) =
    \Big( \bigf(\otimes_i H_i) \Big)(S^2_b).
  \]
  Thus the two composites in \cref{eq:S-nat} are equal as multilinear functors.

  For multinatural transformations $\theta_i \cn H_i \to K_i$, a similar analysis using \cref{definition:free-perm-multinat} shows that
  \[
    1_S * \big( \binprod_i \bigf \theta_i \big) = \bigf \big( \otimes_i \theta_i \big) * 1_S
  \]
  This completes the proof of 2-naturality for $S$.
\end{proof}

Now we define $\bigf$ on multimorphism categories.
\begin{convention}\label{convention:Fun}
  To avoid ambiguity, we let
 \[
 \bigfbar\cn \Multicat(\M,\N) \to \permcatsu(\bigf\M,\bigf\N)
 \] 
 denote the assignment $\bigf$ on 1- and 2-cells as in \cref{definition:free-smfun,definition:free-perm-multinat}.
\end{convention}

\begin{definition}\label{definition:F-multi}
  Define an assignment on multimorphism categories
  \[
    \bigf\cn \Multicat\scmap{\ang{\M};\N} \to \permcatsu\scmap{\ang{\bigf\M};\bigf\N}
  \]
  by sending data such as the following
  \[
    \begin{tikzpicture}[xscale=3,yscale=2.5,baseline={(a.base)}]
      \tikzset{0cell/.append style={nodes={scale=1}}}
      \tikzset{1cell/.append style={nodes={scale=1}}}
      \draw[0cell]
      (0,0) node (a) {\ang{\M}}
      (a)++(1,0) node (b) {\N}
      ;
      \draw[1cell]  
      (a) edge[bend left] node {H} (b)
      (a) edge[bend right] node[swap] {K} (b)
      ;
      \draw[2cell] 
      node[between=a and b at .47, shift={(0,0)}, rotate=-90, 2label={above,\theta}] {\Rightarrow}
      ;
    \end{tikzpicture}
  \]
  to the composites and whiskerings
  \begin{equation}\label{eq:FbarS}
    \begin{tikzpicture}[xscale=3,yscale=2.5,baseline={(a.base)}]
      \tikzset{0cell/.append style={nodes={scale=1}}}
      \tikzset{1cell/.append style={nodes={scale=1}}}
      \draw[0cell]
      (-1,0) node (z) {\ang{\bigf\M}}
      (0,0) node (a) {\bigf\big(\bigotimes_{i=1}^n \M_i\big)}
      (a)++(1,0) node (b) {\bigf\N.}
      ;
      \draw[1cell]  
      (a) edge[bend left] node[pos=.4] {\bigfbar H} (b)
      (a) edge[bend right] node[swap,pos=.4] {\bigfbar K} (b)
      (z) edge node {S} (a)
      ;
      \draw[2cell] 
      node[between=a and b at .47, shift={(0,0)}, rotate=-90, 2label={above,\bigfbar \theta}] {\Rightarrow}
      ;
    \end{tikzpicture}
  \end{equation}
  Multilinearity of $\bigf H = (\bigfbar H) \circ S$ follows from multilinearity of $S$ and $\bigfbar H$ being strict symmetric monoidal.  Likewise, multinaturality of $\bigf \theta = (\bigfbar \theta) * 1_S$ follows from multilinearity of $S$ and $\bigfbar\theta$ being monoidal natural.
\end{definition}

\begin{remark}\label{remark:lex-order}
  In our definition of the functor $S$, we consistently use $\tensor$ of profiles from \cref{definition:xiotimes}, with reverse lexicographic ordering, to define
  \begin{enumerate}
  \item\label{it:rmk-xonen} $\ang{ x^{\onen} }$ in \cref{eq:xonen},
  \item\label{it:rmk-fonen} $f^{\onen}$ in \cref{eq:fonen}, and
  \item\label{it:rmk-phionen} $\ang{ \phi^{\onen}}$ in \cref{eq:phionen}.
  \end{enumerate}
  This is convenient because the indices $\jonejn$ and $\konekn$ in those definitions iterate in left-to-right order.

  However, we can also use the other choice, namely, $\tensor^\transp$ in \cref{definition:xiotimes}, which corresponds to lexicographic ordering.
  In other words, in \cref{it:rmk-xonen} above we can redefine $\ang{ x^{\onen} }$ as the iterated $\tensor^\transp$-product of the tuples $\ang{x^i}$, and likewise for \cref{it:rmk-fonen,it:rmk-phionen}.
  With these consistent changes, the results about $S$ in \cref{sec:multimorphism-assignment}, the definition of $\bigf$ in \cref{eq:FbarS}, and the results in later sections are also valid.
\end{remark}

\section{Non-Symmetric \texorpdfstring{$\Cat$}{Cat}-Multifunctoriality of \texorpdfstring{$\bigf$}{F}}\label{sec:cat-multi-f}

In this section we verify the non-symmetric $\Cat$-multifunctoriality axioms for $\bigf$, from  \cref{def:enr-multicategory-functor}~\cref{it:non-symm-multifunctor}.
\begin{theorem}\label{theorem:F-multi}
  In the context of \cref{definition:F-multi}, $\bigf$ is a non-symmetric $\Cat$-multifunctor.
\end{theorem}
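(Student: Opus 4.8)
The plan is to verify the three compatibility axioms of \cref{def:enr-multicategory-functor}: preservation of colored units, of the symmetric group action \cref{enr-multifunctor-equivariance}, and of the multicategorical composition \cref{v-multifunctor-composition}. By \cref{definition:F-multi} the component functor on multimorphism categories is the composite of the $2$-functor $\bigfbar$ of \cref{proposition:free-perm-functor} with right-whiskering by the multilinear functor $S$ of \cref{definition:S-multi}; since $\bigfbar$ preserves composition and identities of $1$- and $2$-cells, each axiom for $\bigf$ reduces to a coherence property of the family $\{S_{\ang{\M}}\}$. These properties are of the same character as the functoriality and multilinearity of $S$ proved in \cref{proposition:functoriality-S,proposition:multilinearity-S}, and are checked by the same two-step method: an explicit description of the assignment on objects and morphisms through left-normalized iterated tensor products of profiles (\cref{definition:xonen,definition:phionen}), using only the relations of \cref{definition:BVtensor}; together with the remark that the morphisms providing the linearity constraints are determined by permutations, hence agree once their sources, targets, and underlying index maps agree.

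Units are immediate: the colored unit of $\Multicat$ at $\M$ is the identity multifunctor $1_\M$, and by \cref{remark:rho} the unary $S$ is the identity monoidal functor of $\bigf\M$, so $\bigf(1_\M) = \bigfbar(1_\M) = 1_{\bigf\M}$. For the symmetric group action, fix $\si\in\Si_n$. The right action on $\Multicat\scmap{\ang{\M};\N}$ sends $H\cn\bigotimes_i\M_i\to\N$ to its precomposite with the tensor-factor permutation $\xi_\si\cn\bigotimes_i\M_{\si(i)}\to\bigotimes_i\M_i$, and the right action on $\permcatsu\scmap{\ang{\bigf\M};\bigf\N}$ precomposes with the product permutation $\xi_\si\cn\prod_i\bigf\M_{\si(i)}\to\prod_i\bigf\M_i$ of \cref{definition:permcat-action}. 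Writing $\bigf(H\circ\xi_\si) = \bigfbar(H)\circ\bigfbar(\xi_\si)\circ S_{\ang{\M}\si}$ by $2$-functoriality of $\bigfbar$, the equivariance axiom \cref{enr-multifunctor-equivariance} for $\bigf$ reduces to the single identity of multilinear functors
\[
  \bigfbar(\xi_\si)\circ S_{\ang{\M}\si} \;=\; S_{\ang{\M}}\circ\xi_\si \cn \prod_{i=1}^n\bigf\M_{\si(i)} \longrightarrow \bigf\big(\textstyle\bigotimes_{i=1}^n\M_i\big),
\]
i.e.\ to the equivariance of the family $\{S\}$ under permutation of tensor factors; taking $H$ to be an identity multifunctor shows this identity is also necessary. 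On objects and morphisms it asserts that permuting the factors and then forming the left-normalized iterated tensor product of the profiles agrees with forming that product first and then reindexing coordinates, and is verified as in \cref{proposition:functoriality-S}, the interchange relation \cref{definition:BVtensor}~\cref{it:interchange-relation} accounting for the reordering. On each linearity constraint $S^2_b$ both sides are morphisms of $\bigf(\bigotimes_i\M_i)$ determined by a permutation with identity operation components, with common source, target, and index map, hence equal as in \cref{proposition:multilinearity-S}; equivariance on $2$-cells is then automatic.

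For composition, given $H_i\cn\bigotimes_j\M_{i,j}\to\M'_i$ and $H'\cn\bigotimes_i\M'_i\to\M''$, the composite in $\Multicat$ is $\ga\scmap{H';\ang{H}} = H'\circ(\otimes_iH_i)\circ\al$, with $\al$ the associativity isomorphism of \cref{eq:Multicat-comp}, while that in $\permcatsu$ is $\ga\scmap{\bigf H';\ang{\bigf H}} = \bigf H'\circ\prod_i\bigf H_i$ by \cref{definition:permcat-comp}. Expanding both sides with $\bigf(-) = \bigfbar(-)\circ S$ and the $2$-functoriality of $\bigfbar$, and using the $2$-naturality of $S$ from \cref{lemma:S-nat} to replace $S_{\ang{\M'}}\circ\prod_i\bigfbar H_i$ by $\bigfbar(\otimes_iH_i)\circ S$, the composition axiom \cref{v-multifunctor-composition} for $\bigf$ reduces to the associativity coherence of $\{S\}$: iterating $S$ (first along the index $j$, then along $i$) agrees, up to $\bigfbar(\al)$, with $S$ formed in one step along the concatenated index set. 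On objects and morphisms both composites describe the same left-normalized iterated tensor product over the lexicographically ordered indices, differing only in the bracketing corrected by $\al$; on linearity constraints equality again follows from uniqueness of the permutation-type morphisms, exactly as before; and the $2$-cell case follows from \cref{lemma:S-nat}. The main obstacle throughout is the bookkeeping in these two coherences for $\{S\}$ --- keeping track of the permutations $\rho$ of \cref{definition:S-multi}, the interchange permutations $\xitimes$, and the associator $\al$, and matching the linearity constraints of the two composites --- but, exactly as in \cref{sec:multimorphism-assignment}, each comparison is pinned down by uniqueness of permutation-type morphisms in a free permutative category, so no essentially new ideas are needed.
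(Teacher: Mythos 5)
Your proposal is correct and follows essentially the same route as the paper: decompose $\bigf H = \bigfbar H \circ S$, use 2-functoriality of $\bigfbar$ together with \cref{remark:rho} for units, reduce equivariance to the square $\bigfbar(\xi_\si)\circ S_{\ang{\M}\si} = S_{\ang{\M}}\circ\xi_\si$ (checked on the explicit object/morphism assignments), and reduce composition to the 2-naturality of $S$ from \cref{lemma:S-nat} plus the associativity of the iterated tensor-product construction. Your additional remarks on matching linearity constraints via uniqueness of permutation-type morphisms are consistent with how the paper handles the analogous points in \cref{proposition:multilinearity-S,lemma:S-nat}.
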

\begin{proof}
  To verify the axiom for units, recall from \cref{remark:rho} that $S$ is the identity monoidal functor if $n = 1$.  Since $\bigfbar$ is functorial, we have
  \[
    \bigf(1_\M) = 1_{\bigf \M}
  \]
  for each small multicategory $\M$.
  
  To verify the composition axiom, suppose given
  \begin{align*}
    H_a & \in \Multicat\scmap{\ang{\M_a};\M'_a} \forspace 1 \leq a \leq n, \andspace\\
    H' & \in \Multicat\scmap{\ang{\M'};\M''}.
  \end{align*} 
  The two multilinear functors
  \[
    \bigf\big( \ga\scmap{H';\ang{H}} \big)
    \andspace
    \ga\scmap{\bigf H'; \ang{\bigf H}}
  \]
  are given by the two composites around the boundary in the following diagram, where the unlabeled isomorphisms are given by reordering terms.
  \[
    \begin{tikzpicture}[x=30mm,y=20mm]
      \draw[0cell] 
      (0,0) node (t1) {\prod_{a,b} \bigf\M_{a,b}}
      (t1)++(0,-2) node (t2) {\bigf\big(\bigotimes_{a,b} \M_{a,b}\big)}
      (t2)++(1,0) node (t3) {\bigf\big(\bigotimes_{a} \bigotimes_{b} \M_{a,b}\big)}
      (t3)++(1.3,0) node (t4) {\bigf\big(\bigotimes_{a} \M'_a \big)}
      (t4)++(1,0) node (t5) {\bigf \M''}
      (t1)++(1,0) node (b1) {\prod_a \prod_b \bigf\M_{a,b}}
      (b1)++(0,-1) node (b2) {\prod_a \bigf\big(\bigotimes_b \M_{a,b}\big)}
      (b2)++(1.3,0) node (b3) {\prod_a \bigf \M'_{a}}
      ;
      \draw[1cell] 
      (t1) edge node {S} (t2)
      (t2) edge node {\iso} (t3)
      (t3) edge node {\bigfbar(\otimes_a H_a)} (t4)
      (t4) edge node {\bigfbar H'} (t5)
      (t1) edge node {\iso} (b1)
      (b1) edge node {\binprod_a S} (b2)
      (b2) edge node {\binprod_a \bigfbar(H_a)} (b3)
      (b3) edge node {S} (t4)
      (b2) edge node {S} (t3)
      ;
    \end{tikzpicture}
  \]
  
  In the above diagram, the two composites around the middle rectangle are equal as multilinear functors by naturality of $S$ (\cref{lemma:S-nat}) with respect to the multifunctors $H_a$.
  The rectangle at left commutes, as a diagram of underlying functors, by associativity of the products \cref{eq:xonen} and \cref{eq:phionenkonekn}.
  The linearity constraints for the composites around the rectangle at left are $(\rho, \ang{1})$ for the same permutation $\rho$ by the definition of $S^2_b$, \cref{eq:S2b}, and, in the case of the top right composite, the definition of $S$ on morphisms, \cref{eq:Sfi}, and the definition of composition, \cref{eq:FM-comp}.
  A similar diagram for multinatural transformations $H_a \to K_a$ and $H'_a \to K'_a$ commutes by the 2-naturality of $S$.
\end{proof}

\begin{example}[Non-Symmetry of $\bigf$]\label{example:F-nonsymm}
  Suppose given a permutation $\si \in \Si_n$.  The following diagram for 
  compatibility of $\bigf$ with the action of $\si$ generally does \emph{not} commute for $n \ge 2$.
  \begin{equation}\label{eq:Fnonsymm}
    \begin{tikzpicture}[x=40mm,y=-20mm,vcenter]
      \draw[0cell] 
      (0,0) node (a) {\prod_{i=1}^n \bigf\M_i}
      (0,-1) node (a') {\prod_{i=1}^n \bigf\M_{\si(i)}}
      (a)++(1,0) node (b) {\bigf\big( \bigotimes_{i=1}^n \M_i \big)}
      (a')++(1,0) node (b') {\bigf\big( \bigotimes_{i=1}^n \M_{\si(i)} \big)}
      ;
      \draw[1cell] 
      (a') edge['] node {\si} (a)
      (b') edge['] node {\bigf(\si)} (b)
      (a) edge[transform canvas={shift={(0,0pt)}}] node {S} (b)
      (a') edge[transform canvas={shift={(0,0pt)}}] node {S} (b')
      ;
    \end{tikzpicture}
  \end{equation}
  Indeed, suppose $n = 2$ with $\si$ the nontrivial transposition and consider
  \begin{align*}
    \ang{x^1} & = (x^1_1, x^1_2, x^1_3) \in \bigf\M_1\\
    \ang{x^2} & = (x^2_1, x^2_2) \in \bigf\M_2.
  \end{align*}
  Then the assignments along the top and right of \cref{eq:Fnonsymm} are the following, respectively, 
  \begin{align*}
    \big( \ang{x^2} , \ang{x^1}  \big)
    & \mapsto
      \big( x^{21}_{11}\,,\, x^{21}_{21}\,,\, x^{21}_{12}\,,\, x^{21}_{22}\,,\, x^{21}_{13}\,,\, x^{21}_{23} \big)\nonumber\\
    & \mapsto
      \big( x^{12}_{11}\,,\, x^{12}_{12}\,,\, x^{12}_{21}\,,\, x^{12}_{22}\,,\, x^{12}_{31}\,,\, x^{12}_{32} \big).
  \end{align*} 
  On the other hand, the assignments along the left and bottom of \cref{eq:Fnonsymm} are the following, respectively,
  \begin{align*}
    \big( \ang{x^2} , \ang{x^1}  \big)
    & \mapsto
      \big( \ang{x^1} , \ang{x^2}  \big)\nonumber\\
    & \mapsto
      \big( x^{12}_{11}\,,\, x^{12}_{21}\,,\, x^{12}_{31}\,,\, x^{12}_{12}\,,\, x^{12}_{22}\,,\, x^{12}_{32} \big).
  \end{align*}
  Thus the composites around \cref{eq:Fnonsymm} differ by a generally nontrivial permutation.
\end{example}

\section{Two Transformations}\label{sec:two-transf}
Recall from \cref{definition:EndC-basepoint} that the endomorphism multicategory $\End(\C)$ associated to a permutative category $\C$ defines a 2-functor
\[
\End\cn \permcatsu \to \Multicat.
\]
Throughout this section we let $\bige = \End$.  We recall from \cite{johnson-yau-permmult} two constructions that will be used to develop componentwise multinatural weak equivalences between the composites $\bige\bigf$, $\bigf\bige$, and the respective identity multifunctors.

\subsection*{Comparing $\bige\bigf$ and the Identity}
\begin{definition}\label{definition:eta}
  Suppose $\M$ is a small multicategory.
  Define a component
  \[
  \eta = \eta_{\M}\cn \M \to \bige\bigf\M
  \]
  as follows.
  For an object $w \in \M$ and an operation $\phi \in \M\mmap{y;\ang{x}}$, let $(w)$ and $(\phi)$ denote the corresponding length-1 sequences.
  For each $r \ge 0$, let $\iota_r \cn \ufs{r} \to \ufs{1}$ be the unique map of finite sets.
  Then $\eta = \eta_\M$ is the following assignment:
  \begin{align*}
    \eta w & = (w) \forspace w \in \M, \andspace\\
    \eta \phi & = (\iota_r, (\phi)) \cn \ang{x} \to (y)
  \end{align*}
  where $\phi \in \M\mmap{y;\ang{x}}$ and $|\ang{x}| = r$.
  Note that $\ang{x}$ is an $r$-fold concatenation of length-1 sequences $(x_i)$ and the morphism $(\iota_r, (\phi))$ in $\bigf\M$ is an $r$-ary operation in $\bige\bigf\M$.
  \Cref{lemma:eta-mnat} shows that each $\eta_\M$ is multifunctorial and that the components are $\Cat$-multinatural.
\end{definition}

\begin{lemma}\label{lemma:eta-mnat}
  The components $\eta_\M$ of \cref{definition:eta} define a $\Cat$-multinatural transformation
  \[
  \eta\cn 1_{\Multicat} \to \bige\bigf.
  \]
\end{lemma}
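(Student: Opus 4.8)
The plan is to check the two pieces that make up $\Cat$-multinaturality: that each $\eta_\M$ is a genuine multifunctor $\eta_\M\cn \M \to \bige\bigf\M$, hence a $1$-ary operation of $\Multicat$ from $\M$ to $\bige\bigf\M$; and that the naturality diagram \cref{enr-multinat} commutes for every arity $n\ge 0$. Since $\V = \Cat$ here, with $\otimes = \times$ and $\tu = \boldone$, the naturality condition amounts to the assertion that, for all small multicategories $\M_1,\ldots,\M_n,\N$, the two functors
\[
\Multicat\scmap{\ang{\M};\N} \longrightarrow \Multicat\scmap{\ang{\M};\bige\bigf\N}, \qquad (-)\mapsto \ga\scmap{\eta_\N;(-)} \quad\text{and}\quad (-)\mapsto \ga\scmap{\bige\bigf(-);\eta_{\M_1},\ldots,\eta_{\M_n}}
\]
coincide; being functors, they coincide as soon as they agree on objects, i.e.\ on multifunctors $H\cn \bigotimes_i\M_i \to \N$, and on morphisms, i.e.\ on multinatural transformations $\kappa\cn H\to K$. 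For $n = 1$ the functor $S$ is the identity (\cref{remark:rho}), so the diagram is the ordinary naturality square together with its $2$-cell refinement, and for $n=0$ the verification is direct; the substance is the case $n\ge 2$.

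For the multifunctoriality of $\eta_\M$ I would check the three axioms of \cref{def:enr-multicategory-functor} directly. Preservation of colored units is immediate, since $\iota_1$ is the identity of $\ufs 1$, so $\eta_\M(1_w) = (\iota_1,(1_w))$ is the identity of $(w)$ in $\bigf\M = \bige\bigf\M$. Preservation of the symmetric group action and of composition follow from the composition and symmetry formulas of \cref{definition:free-perm} together with the equivariance and associativity axioms of $\M$: composing $\eta_\M\psi,\eta_\M\phi_1,\ldots,\eta_\M\phi_m$ in $\bige\bigf\M = \End(\bigf\M)$ unwinds, via \cref{definition:free-perm}, to a single application of $\ga$ in $\M$ with all index maps landing in $\ufs 1$ and all comparison permutations $\si$ trivial, which is exactly $\eta_\M\big(\ga\scmap{\psi;\ang{\phi}}\big)$; the symmetric-action axiom is the analogous, shorter computation.

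For the object comparison when $n\ge 2$: the first functor sends $H$ to $\eta_\N\circ H$, the second to $(\bige\bigf H)\circ(\otimes_i\eta_{\M_i})$, where $\bige\bigf H = \End\big((\bigfbar H)\circ S\big)$ by \cref{definition:F-multi} and \cref{definition:EndC-basepoint}, with $S$ the strong multilinear functor of \cref{definition:S-multi}. Both of these are multifunctors out of $\bigotimes_i\M_i$, and that multicategory is generated by its objects together with the operations placing a single $\phi\in\M_b\mmap{c_b';\ang{x}}$, say $|\ang{x}| = p$, in one variable and objects $c_i$ in the others; since multifunctors preserve $\ga$ and the symmetric action, it suffices to compare the two on objects and on such generators (cf.\ \cref{explanation:bvtensor}). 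On an object $(c_1,\ldots,c_n)$ both return the length-$1$ sequence $\big(H(c_1,\ldots,c_n)\big)$ — for the second because $S\big((c_1),\ldots,(c_n)\big) = \big((c_1,\ldots,c_n)\big)$ and $\bigfbar H$ then acts coordinatewise. On the generator, which by \cref{definition:BVtensor}~\cref{it:BV-1} equals $1_{c_1}\otimes\cdots\otimes\phi\otimes\cdots\otimes 1_{c_n}$, the first functor returns $(\iota_p,(H\phi))$, where we abbreviate by $H\phi$ the value of $H$ at this generating operation. For the second, $\otimes_i\eta_{\M_i}$ carries the generator to the operation of $\bigotimes_i\bige\bigf\M_i$ with $(\iota_p,(\phi))$ in the $b$th slot and objects $(c_i)$ elsewhere, and $\End\big((\bigfbar H)\circ S\big)$ applied to it — as described by the recipe underlying \cref{proposition:n-lin-equiv} — is the composite in $\bigf\N$ of (i) the $b$th linearity constraint of $(\bigfbar H)\circ S$, iterated over the $p$ summands, with (ii) $(\bigfbar H)\circ S$ applied to the corresponding morphism of $\prod_i\bigf\M_i$. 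Unwinding $S$ via \cref{definition:S-multi}, factor (ii) is $\bigfbar H$ of $\big(\iota_p,(1_{c_1}\otimes\cdots\otimes\phi\otimes\cdots)\big) = (\iota_p,(H\phi))$; factor (i) is $\bigfbar H$ of the iterated $S^2_b$, which by \cref{definition:S-multi} is a block permutation of a length-$p$ sequence with itself, and this permutation is the identity here precisely because every variable other than the $b$th contributes a length-$1$ sequence — the degenerate case of \cref{remark:rho}. Hence the two composites agree on generators, so they agree as multilinear functors.

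The morphism comparison is similar but easier: for $\kappa\cn H\to K$ both functors yield a multinatural transformation between the two (now equal) multifunctors, and such a transformation is pinned down by its components at objects $(c_1,\ldots,c_n)$; the first has component $\eta_\N(\kappa_{(c_1,\ldots,c_n)}) = \big(\iota_1,(\kappa_{(c_1,\ldots,c_n)})\big)$, and the second, by \cref{definition:F-multi}, \cref{definition:free-perm-multinat}, and $\End$ of a multilinear transformation, has component the value of $\bigfbar\kappa$ at the length-$1$ sequence $\big((c_1,\ldots,c_n)\big) = S\big((c_1),\ldots,(c_n)\big)$, which by \cref{definition:free-perm-multinat} is again $\big(\iota_1,(\kappa_{(c_1,\ldots,c_n)})\big)$. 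I expect the one real obstacle to be the bookkeeping in the object comparison — unwinding $\bige\bigf H = \End\big((\bigfbar H)\circ S\big)$ on generating operations and confirming that the iterated linearity constraint $S^2_b$ degenerates to an identity when the non-$b$ variables carry length-$1$ sequences; everything else is a routine chase through \cref{definition:free-perm,definition:free-perm-multinat,definition:S-multi}.
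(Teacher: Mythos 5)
Your proposal is correct and follows essentially the same route as the paper: multifunctoriality of each $\eta_\M$ is checked by observing that the unit map $\iota_1$ and the comparison permutations $\si^1_{g,f}$ (for $g=\iota_s$, $f=\oplus_j\iota_{r_j}$) are identities, and multinaturality is reduced, via the identification $(\bige\bigf)H = \bige\big((\bigfbar H)\circ S\big)$, to checking that both composites have the same underlying assignment on objects and operations of $\bigotimes_i\M_i$. Your explicit reduction to single-slot generators and the observation that the iterated $S^2_b$ degenerates to an identity when the other variables carry length-$1$ sequences is just a more detailed spelling-out of the paper's one-line comparison of underlying assignments.
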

\begin{proof}
  To check multifunctoriality of each component $\eta = \eta_{\M}$, first note that $\eta$ preserves unit operations because $\iota_{1}$ is the identity on $\ufs{1}$. 
  For compatibility with the symmetric group actions, first recall 
  that the symmetry $\xi$ in $\bigf\M$, \cref{eq:FM-xi}, has the form $(\tau,\ang{1})$ and the symmetric group action in an endomorphism multicategory is given by permuting input objects (\cref{definition:EndC-basepoint}).
  Thus, for an operation $\phi \in \M\scmap{\ang{x};y}$ and a permutation $\si \in \Si_r$, where $r = |\ang{x}|$,
  the operation
  \[
    (\eta \phi) \cdot \sigma \cn \ang{x}\si \to (y) \inspace \bige\bigf\M
  \]
  is given by the composite
  $( \iota_r , (\phi) ) \circ ( \sigma , \ang{1} )$.
  We have
  \[
    ( \iota_r , (\phi) ) \circ ( \sigma , \ang{1} )
    = ( \iota_r , (\phi \cdot \sigma) )
    = \eta(\phi \cdot \sigma),
  \]
  where the first equality follows from composition \cref{eq:FM-comp} in $\bigf\M$ and right unity \cref{enr-multicategory-right-unity} in $\M$.

  For compatibility with composition, suppose given
  \[
  \psi \in \M\mmap{x'';\ang{x'}} \wherespace |\ang{x'}| = s, \andspace
  \]
  \[
  \phi_j \in \M\mmap{x'_j;\ang{x_j}} \wherespace |\ang{x_j}| = r_j \forspace
  j \in \ufs{s}.
  \]
  Let $r = \sum_j r_j$.
  Then the composite in $\bige\bigf\M$ of
  \[
  \eta \psi = (\iota_s, (\psi)) \andspace \ang{\eta \phi_j}_{j=1}^s = \ang{(\iota_{r_j}, (\phi_j))}_j
  \]
  is given by composing the morphisms
  \begin{equation}\label{eq:eta-multinat-comp}
  (\iota_s, (\psi)) \andspace \bigoplus_{j \in \ufs{s}} (\iota_{r_j}, \phi_j) = \big(\oplus_{j \in \ufs{s}} \iota_{r_j}, \ang{\phi}\big)
  \end{equation}
  in $\bigf\M$.
  For $g = \iota_s$ and $f = \oplus_j \iota_{r_j}$, the permutation $\sigma^1_{g,f}$ of \cref{eq:sigma-kgf} is the identity on $\ufs{r}$.
  Therefore the composite of the morphisms \cref{eq:eta-multinat-comp} is
  \[
  \big(\iota_s \circ (\oplus_j \iota_{r_j}), \ga\scmap{\psi;\ang{\phi}}\big)
  = \big(\iota_r, \ga\scmap{\psi;\ang{\phi}}\big) 
  = \eta\ga\scmap{\psi; \ang{\phi}}.
  \]

  Multinaturality of $\eta$ with respect to multifunctors 
  \[
    H \cn \bigotimes_{a = 1}^n \M_a \to \N
  \]
  is given by commutativity of the following outer diagram.
  \begin{equation}\label{eq:eta-mnat}
    \begin{tikzpicture}[x=50mm,y=30mm,scale=.8,vcenter]
      \draw[0cell=.9] 
      (0,0) node (a) {\bigotimes_a \M_a}
      (a)++(1,1) node (b) {\bigotimes_a \bige\bigf\M_a}
      (b)++(1,-1) node (c) {\bige\bigf\N}
      (a)++(1,-1) node (b') {\N}
      (a)++(1,.5) node (a') {\bigwedge_a \bige\bigf\M_a}
      (a')++(0,-.5) node (a'') {\bige\bigf\big(\bigotimes_a \M_a)}
      ;
      \draw[1cell=.9] 
      (a) edge node {\otimes_a \eta_{\M_a}} (b)
      (b) edge node {(\bige\bigf)H} (c)
      (a) edge['] node {H} (b')
      (b') edge['] node {\eta_{\N}} (c)
      (b) edge node {\varpi} (a')
      (a') edge node {\bige S} (a'')
      (a'') edge node {\bige(\bigfbar H)} (c)
      ;
    \end{tikzpicture}
  \end{equation} 
  In the above diagram, the multifunctor $\varpi$ is the $n$-variable version of the universal morphism 
\[\varpi_{\M,\N} \cn \M \otimes \N \to \M \sma \N\]
in \cref{MsmashN}.  The multifunctor $\bige S$ is the image of
  \[
    S\cn \prod_a \bigf\M_a \to \bigf\big(\bigotimes_a \M_a\big) \inspace \permcatsu\scmap{\ang{\bigf\M};\bigf\big(\bigotimes_a \M_a\big)}
  \]
  under the isomorphism
  \[
    \bige\cn \permcatsu\scmap{\ang{\bigf\M};\bigf\big(\bigotimes_a \M_a\big)}
    \fto{\iso}
    \Multicat_*\scmap{\ang{\bige\bigf\M};\bige\bigf\big(\bigotimes_a \M_a\big)}
  \]
  from \cref{proposition:n-lin-equiv}.  Thus the inner triangular region commutes by functoriality of $\bige$ and the equality $(\bige\bigf)H = \bige(\bigf H)$.  The remaining region commutes because both composites around its boundary have underlying assignments
  \begin{align*}
    \otimes_a x_a & \mapsto \big(H(\otimes_a x_a)\big) \forspace x_a \in \M_a \andspace\\
    \otimes_a \phi_a & \mapsto \big(\iota_r, H(\otimes_a \phi_a)\big) \forspace \phi_a \in \M_a\scmap{\ang{x_a};y_a}, 
  \end{align*}
  where each $\phi_a$ has arity $r_a$ and $r = \binprod_a r_a$ is the arity of $\otimes_a \phi_a$.
  Multinaturality of $\eta$ with respect to multinatural transformations $\ka\cn H \to K$ follows similarly.
  This completes the proof that
  \[
  \eta\cn 1_{\Multicat} \to \bige\bigf
  \]
  is a $\Cat$-enriched multinatural transformation.
\end{proof}
\begin{remark}\label{remark:eta-monoidal-natural}
  For readers familiar with $\Cat$-enriched symmetric monoidal structure as in \cite[Sections 1.4 and~1.5]{cerberusIII}, the diagram \cref{eq:eta-mnat} reduces, in the case $n = 2$ and $H = 1$, to the axiom for $\Cat$-monoidal naturality of $\eta$.
\end{remark}

\subsection*{Comparing $\bigf\bige$ and the Identity}

\begin{definition}\label{definition:rho}
  Suppose $\C$ is a small permutative category.  Define a component symmetric monoidal functor
  \[
    \rho = \rho_\C \cn \C \to \bigf\bige\C
  \]
  by the inclusion of length-1 tuples, as follows:
  \begin{align*}
    \rho x & = (x) \andspace\\
    \rho \phi & = (1_{\ufs{1}},(\phi))
  \end{align*}
  for objects $x$ and morphisms $\phi$ in $\C$.  
  The monoidal and unit constraints are given by the following morphisms
  for objects $x$ and $x'$ in $\C$:
  \begin{align*}
    (\rho^2_{\C})_{x,x'} & = (\iota_{2}, 1_{x \oplus x'})\cn (x,x') \to (x \oplus x')
                           \andspace\\
    \rho^0_{\C} & = (\iota_0,1_e)\cn \ang{} \to (e).
  \end{align*}
  Functoriality of $\rho$ follows from the formula \cref{eq:FM-comp} for composition in $\bigf\bige\C$.  The associativity and unity axioms for $(\rho_\C,\rho^2_\C,\rho^0_\C)$ follow because the product in $\bigf\bige\C$ is given by concatenation of tuples and $e$ is a strict unit for $\C$.  The symmetry axiom follows because the symmetric group action on $\bige\C$ is given by composition with the symmetry isomorphism of $\C$ (and iterates thereof).  
\end{definition}

\begin{remark}
  One can show that the components $\rho_\C$ are multinatural with respect to multilinear functors and transformations.  However, these components are not strictly unital because the unit constraints $\rho^0$ are not identities.  Thus the components $\rho_\C$ are not 1-cells in $\permcatsu$.
  To address this, we recall the following construction, replacing a general symmetric monoidal functor with a zigzag of strictly unital symmetric monoidal functors.
\end{remark}

\begin{definition}\label{definition:Cmark}
  Define a functor
  \[
    (-)^\whisk \cn \permcat \to \permcatsu
  \]
  that adjoins a new monoidal unit, as follows.
  \begin{description}
  \item[Objects]
    Suppose $(\C,\oplus,e)$ is a permutative category.  Let $\C^\whisk$ denote the permutative category whose objects, morphisms, and symmetric monoidal structure are given by those of $\C$, together with an additional object $0$ that is a strict monoidal unit and an additional morphism $t\cn 0 \to e$.

    Let $I = \{0 \to 1\}$ denote the permutative category formed by two idempotent objects and a single morphism from the monoidal unit to the other object.  Then $\C^\whisk$ is the permutative category obtained by adjoining $I$ to $\C$ by identifying the objects $1$ and $e$.

  \item[Morphisms]
    Suppose $P \cn \C \to \D$ is a symmetric monoidal functor.  Let
    \begin{equation}\label{eq:Pwhisk}
      P^\whisk \cn \C^\whisk \to \D^\whisk
    \end{equation} 
    be the strictly unital symmetric monoidal functor that is given by $P$ on objects and morphisms of $\C$, and that sends the additional morphism $t$ in $\C^\whisk$ to the composite
    \[
      0 \fto{t} e^\D \fto{P^0} P(e^\C)
    \]
    in $\D^\whisk$.

    The monoidal constraint of $P^\whisk$ is given by that of $P$ for objects $x,y \neq 0$ in $\C$.  If either $x$ or $y$ is $0$, then $P^\whisk(0) = 0 \in \D$ and the monoidal constraint is an identity morphism.
  \end{description}

  There is a strict symmetric monoidal functor
  \begin{equation}\label{eq:retr}
    \C^\whisk \fto{r_\C} \C
  \end{equation} 
  that is the identity on objects and morphisms of $\C$ and sends the additional morphism $t$ of $\C^\whisk$ to the identity morphism of $e$ in $\C$.  We call this the \emph{retraction} for $\C^\whisk$.  We let
  \begin{equation}\label{eq:jC}
    \C \fto{j_\C} \C^\whisk
  \end{equation} 
  denote the inclusion, so that $r_\C j_\C$ is the identity on $\C$.  Note that $r_\C \dashv j_\C$ is an adjunction of underlying categories.
\end{definition}

\begin{explanation}[Composition and Monoidal Sum in $\C^\whisk$]
  The morphisms $0 \to x$ in $\C^\whisk$, for $x \neq 0$, are given by composition with $t$.  Thus, the morphism sets in $\C^\whisk$ are given as follows, for $x,y \neq 0$
  \begin{align*}
    \C^\whisk(0,0)
    & = \{1_0\}
    & \C^\whisk(0,e)
    & = \{t\}\\
    \C^\whisk(0,x)
    & = \C(e,x) \times \{t\}
    & \C^\whisk(x,0)
    & = \varnothing\\
    \C^\whisk(x,y)
    & = \C(x,y).
  \end{align*}
  The composition
  \[
    \C^\whisk(x,y) \times \C^\whisk(0,x) \to \C^\whisk(0,y)
  \]
  sends a pair of morphisms $f\cn x \to y$ and $gt\cn 0 \to e \to x$ to $(fg)t$.

  The monoidal sum $\oplus$ on morphisms in $\C^\whisk$ is defined as follows.
  \begin{enumerate}
  \item For morphisms $f\cn e \to x$ and $g\cn e \to y$ in $\C$,
    \[
      (0 \fto{t} e \fto{f} x) \oplus (0 \fto{t} e \fto{g} y)
      =
      (0 \fto{t} e \fto{1_e} e \oplus e \fto{f \oplus g} x \oplus y).
    \]
    In particular, $t \oplus t = t$.
  \item For morphisms $f \cn e \to x$ and $h\cn a \to b$ in $\C$,
    \[
      (0 \fto{t} e \fto{f} x) \oplus (a \fto{h} b)
      =
      (0 \oplus a \fto {1_a} e \oplus a \fto{f \oplus h}  x \oplus b).
    \]
    Similarly, $h \oplus (ft) = (h \oplus f)$.\dqed
  \end{enumerate}
\end{explanation}

It will be helpful to introduce additional notation for the following slight variant of the above construction $(-)^\whisk$.
The variant is used in the statement of \cref{lemma:rho-multinat} and other discussion below.
\begin{definition}\label{definition:Pmark}
  Given a symmetric monoidal functor
  \[
    (P,P^2,P^0)\cn \C \to \D,
  \]
  let $\C^\tail = \C^\whisk$ and let $P^\tail$ denote the composite of $P^\whisk$ with $r_{\D}$ shown in the following diagram.
  \begin{equation}\label{eq:Pmark}
    \begin{tikzpicture}[x=30mm,y=10mm,vcenter]
      \draw[0cell] 
      (1,0) node (ct) {\C^\tail}
      (1,-1) node (cw) {\C^\whisk}
      (2,0) node (d) {\D}
      (2,-1) node (dw) {\D^\whisk}
      ;
      \draw[1cell] 
      (ct) edge node {P^\tail} (d)
      (ct) edge[equal] node {} (cw)
      (cw) edge node {P^\whisk} (dw)
      (dw) edge['] node {r_\D} (d)
      ;
    \end{tikzpicture}
  \end{equation} 
  Thus, $P^\tail$ extends $P$ by sending the additional morphism $t$ of $\C^\tail$ to the unit constraint $P^0$.

  The construction $(-)^\tail$ provides a multifunctor
  \[
    \permcatsu \to \permcatsu
  \]
  determined by composition and whiskering with the canonical pointed multifunctors
  \[
    \bigwedge_i \bige (\C_i^\tail) \to \big(\bigwedge_i \bige\C_i\big)^\tail 
  \]
  that are strictly unital and determined by the identity on each $\bige\C_i$.
\end{definition}

\begin{lemma}\label{lemma:rho-multinat}
  Using the constructions of \cref{eq:retr} and \cref{definition:Pmark}, the components $\rho_\C$ define a zigzag of $\Cat$-multinatural transformations 
  \[
    1_{\permcatsu} \xleftarrow{r} (-)^\tail \fto{\rho^\tail} \bigf\bige.
  \]
\end{lemma}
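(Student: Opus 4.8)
The plan is to check two things: that every component of each candidate transformation is a $1$-cell of $\permcatsu$, and that each family satisfies the $\Cat$-multinaturality diagram \cref{enr-multinat}. The first point is immediate from \cref{definition:Pmark}: $q_\C\cn\C^\tail\to\C$ is the left leg of the zigzag \cref{eq:Pmark} (it depends only on $\C$) and $\rho^\tail_\C = (\rho_\C)^\tail\cn\C^\tail\to\bigf\bige\C$ is the right leg of \cref{eq:Pmark} for $P = \rho_\C$, and both legs are strictly unital symmetric monoidal by construction. Moreover $q_\C$ is strict (the identity on $\C$, with $0\mapsto e$ and $t\mapsto 1_e$), while $\rho^\tail_\C$ extends $\rho_\C$ with $0\mapsto e^{\bigf\bige\C}=\ang{}$ and $t\mapsto\rho^0_\C$.

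For the substantive part I would fix a multilinear functor $R\cn\ang{\C}\to\D$ (and a multilinear transformation $\theta\cn R\to R'$) and verify the square \cref{enr-multinat} by splitting the check between the honest objects and morphisms --- those lying in the $\C_i$ --- and the adjoined data $0,t$. For the family $\{\rho^\tail_\C\}$: since $\bige=\End$ and $\bigf$ are $\Cat$-multifunctors (\cref{definition:EndC-basepoint}, \cref{proposition:n-lin-equiv}, \cref{theorem:F-multi}) so is $\bigf\bige$, and the relevant instance of \cref{enr-multinat} is the equality of $n$-linear functors $\prod_i\C_i^\tail\to\bigf\bige\D$
\[
  \rho^\tail_\D\circ(-)^\tail(R) \;=\; (\bigf\bige R)\circ\textstyle\prod_i\rho^\tail_{\C_i},
\]
composition being that of \cref{definition:permcat-comp}, together with its analogue for $\theta$. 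On honest objects $(-)^\tail(R)$ restricts to $R$ and $\rho^\tail_{\C_i}$ to $\rho_{\C_i}$, and the argument runs parallel to \cref{lemma:eta-mnat}: both composites send $(X_1,\dots,X_n)$ to the length-$1$ sequence $\big(R(X_1,\dots,X_n)\big)$, using that $S$ turns a tuple of length-$1$ sequences into the length-$1$ sequence on the tensor of its entries (\cref{definition:S-multi}) and that $\bige R$ reproduces $R$ on objects and morphisms via \cref{proposition:n-lin-equiv}; the linearity constraints match because $\rho^2_\D=(\iota_2,1_{x\oplus x'})$ and $\rho^0_\D=(\iota_0,1_e)$ are exactly what the composite-constraint formula \cref{ffjlinearity} produces from $\bige R$'s images of the slotwise generating operations, the block permutations built into $S$ reducing to identities for the length-$1$ inputs at issue (cf.\ \cref{remark:rho}). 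On the adjoined data both sides are forced: a tuple with some coordinate $0$ maps to $\ang{}$ --- on the left since $\rho^\tail_\D(0)=\ang{}$, on the right since $\rho^\tail_{\C_j}(0)=\ang{}$ and $\bigf\bige R$ obeys the object-unity axiom \cref{nlinearunity}; and the basic morphism ``$t$ in slot $j$, identities elsewhere'' (which generates the adjoined morphisms together with the honest ones) maps to $\rho^0_\D$ --- on the left because $(-)^\tail(R)$ sends it to the unique morphism $0\to e$ of $\D^\tail$ and $\rho^\tail_\D$ carries that to $\rho^0_\D$, on the right because $\rho^\tail_{\C_j}$ sends it to $\rho^0_{\C_j}$, $S$ then sends it to the basepoint $0$-ary operation (as $\rho^0_{\C_j}$ involves the basepoint operation of $\bige\C_j$ in slot $j$), and $\bigfbar(\bige R)$ carries that to $\rho^0_\D$ since $\bige R$ is pointed. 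The transformation case is similar, using \cref{definition:free-perm-multinat} in place of \cref{definition:free-smfun} and the matching component identity $\rho_\D(\theta_{\ang{X}}) = (1_{\ufs{1}},(\theta_{\ang{X}})) = (\bigf\bige\theta)_{\ang{X}}$.

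For the family $\{q_\C\}$ the target is $1_{\permcatsu}$, which acts as the identity on categories of $n$-ary operations, so \cref{enr-multinat} collapses to the equality $q_\D\circ(-)^\tail(R) = R\circ\prod_i q_{\C_i}$ of $n$-linear functors $\prod_i\C_i^\tail\to\D$ and its analogue for $\theta$. On honest objects and morphisms this holds because $(-)^\tail(R)$ restricts to $R$, the $q_{\C_i}$ restrict to identities, and $q_\D$ is strict monoidal; on the adjoined data a tuple with a coordinate $0$ maps to $e$ on both sides (directly on the left, and via $q_{\C_j}(0)=e$ together with object-unity \cref{nlinearunity} on the right), and the basic morphism ``$t$ in slot $j$'' maps to $1_e$ on both sides (via $q_\C(t)=1_e$ and the morphism-unity part of \cref{nlinearunity}).

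The hard part should be the honest-object computation for $\rho^\tail$ --- matching the linearity constraints of the two composites, which forces one to unwind both \cref{ffjlinearity} and the description of $\bigf$ on multilinear functors in \cref{definition:F-multi} (via $S$ and $\bigfbar$) and to confirm that the nontrivial block permutations inside $S$ play no role for length-$1$ inputs --- together with making precise how $(-)^\tail$ acts on a multilinear functor, in particular pinning down its effect on the adjoined morphism $t$ in a single coordinate through the smash-product basepoint relations.
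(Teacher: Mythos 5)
Your proposal is correct and takes essentially the same route as the paper: the paper also splits the check into the two squares of the zigzag, verifies the left square by the definition of $(-)^\tail$ on multilinear functors, and verifies the right square separately on the honest data (where it holds because $\rho$ is the inclusion of length-one tuples) and on the adjoined unit objects (where it holds because every arrow strictly preserves units). Your expansion of the linearity-constraint comparison via \cref{ffjlinearity} and $S$ is more detailed than the paper's one-line justification but substantiates the same claim.
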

\begin{proof}
  For a multilinear functor
  \[
    \ang{\C} \fto{P} \D,
  \]
  the left half of the diagram below commutes by definition of $(-)^\tail$ on multilinear functors $P$.
  \begin{equation}\label{eq:rhomark-mnat}
    \begin{tikzpicture}[x=50mm,y=20mm,scale=.8,vcenter]
      \draw[0cell=.9] 
      (0,0) node (a) {\prod_a^{\phantom{a}} \C_a}
      (a)++(1,0) node (a') {\prod_a^{\phantom{a}} \C_a^\tail}
      (a')++(1,0) node (b) {\prod_a^{\phantom{a}} \bigf\bige\C_a}
      (a)++(0,-1) node (d) {\D}
      (a')++(0,-1) node (d') {\D^\tail}
      (b)++(0,-1) node (e) {\bigf\bige\D}
      ;
      \draw[1cell=.9] 
      (a') edge['] node {\binprod_a r_{\C_a}} (a)
      (d') edge['] node {r_{\D}} (d)
      (a') edge node {\binprod_a \rho^\tail_{\C_a}} (b)
      (d') edge node {\rho^\tail_\D} (e)
      (a) edge['] node {P} (d)
      (b) edge node {\bigf\bige P} (e)
      (a') edge node {(P)^\tail} (d')
      ;
    \end{tikzpicture}
  \end{equation} 
  Away from the new unit objects $0 \in \C_a^\bullet$, the right half of the above diagram commutes because $\rho$ is the inclusion of length-one tuples.  Commutativity for unit objects follows because each arrow strictly preserves units.  A similar analysis applies to multilinear transformations $\theta \cn P \to Q$.
\end{proof}

\section{Equivalence of Homotopy Theories}\label{sec:hty-thy-equiv}

For permutative categories and for multicategories, we define stable equivalences via the stable equivalences on $K$-theory spectra.  We let $\SymSp$ denote the Hovey-Shipley-Smith category of symmetric spectra \cite{hss}.  We let
\[
\Kse \cn \PermCatsu \to \SymSp
\]
denote Segal's $K$-theory functor \cite{segal} that constructs a connective symmetric spectrum from each small permutative category.
See \cite[Chapters~7 and~8]{cerberusIII} for a review and further references.

\begin{definition}\label{definition:she}
  We define stable equivalences, $\cS$, of permutative categories and multicategories via the three functors
  \[
    \begin{tikzpicture}[x=30mm,y=15mm]
      \draw[0cell] 
      (0,0) node (a) {\permcat}
      (0,-1) node (b) {\Multicat}
      (1,-.5) node (c) {\permcatsu}
      (2,-.5) node (d) {\SymSp}
      ;
      \draw[1cell] 
      (a) edge node {(-)^\whisk} (c)
      (b) edge['] node {\bigf} (c)
      (c) edge node {\Kse} (d)
      ;
    \end{tikzpicture}
  \]
  as follows.
  \begin{itemize}
  \item A strictly unital symmetric monoidal functor $P$ is a \emph{stable equivalence} if $\Kse P$ is a stable equivalence of connective spectra.
    
  \item A multifunctor $H$ is a \emph{stable equivalence} if $\bigf H$ is a stable equivalence of permutative categories.

  \item A general symmetric monoidal functor $P$ is a \emph{stable equivalence} if $P^\whisk$ is a stable equivalence in $\permcatsu$.  That is, $P$ is a stable equivalence if $\Kse(P^\whisk)$ is a stable equivalence of connective spectra.  \Cref{proposition:permcat-permcatsu} below shows that when $P$ is strictly unital, then $\Kse(P^\whisk)$ is a stable equivalence of connective spectra if and only if $\Kse P$ is so.
  \end{itemize}

  Thus, the stable equivalences in each of $\permcatsu$, $\permcat$, and $\Multicat$, respectively, are reflected by $\Kse$, $(-)^\whisk$, and $\bigf$.  We let $\cS$ denote the class of stable equivalences in each case.
\end{definition}

The following result shows that the homotopy theories determined by stable equivalences in $\permcat$ and $\permcatsu$ are equivalent.  The result is well known to experts; see e.g., \cite[Theorem~3.9]{mandell_inverseK} and \cite[Theorem~2.15]{gjo-extending} for discussion of the general symmetric monoidal case.  For completeness, we present a short proof based on \cite[Theorem~1.11]{gjo-extending}.
\begin{proposition}\label{proposition:permcat-permcatsu}
  There is an adjunction $(-)^\whisk \dashv I$ that induces an equivalence of homotopy theories 
  \begin{equation}\label{eq:permcat-permcatsu-steq}
    (-)^\whisk\cn \big( \permcat, \cS \big) \lrsimadj \big( \permcatsu, \cS  \big) \cn I
  \end{equation}
  where $I$ is the inclusion.
\end{proposition}
\begin{proof}
  The unit and counit of the adjunction are given by
  \begin{align*}
    j_{\C} & \cn \C \to \C^\whisk = I(\C^\whisk) \andspace\\
    r_{\C} & \cn (I\C)^\whisk = \C^\whisk \to \C
  \end{align*}
  from \cref{eq:jC} and \cref{eq:retr}, respectively.
  Because $I$ is a subcategory inclusion, it is a \emph{map extension} in the sense of \cite[Definition~1.9]{gjo-extending}.  Moreover, each component of the counit, $r$, is a stable equivalence in $\permcatsu$ because $r_\C \dashv j_\C$ is an adjunction of underlying categories.  Thus, the adjunction $(-)^\whisk \dashv I$ satisfies the conditions of \cite[Theorem~1.11~(3)]{gjo-extending}: the left adjoint creates stable equivalences and each component of the counit is a stable equivalence.  Therefore, condition \cite[Theorem~1.11~(2)]{gjo-extending} also holds: the right adjoint $I$ creates stable equivalences and the unit is componentwise a stable equivalence in $\permcat$.  It follows that \cref{eq:permcat-permcatsu-steq} is an adjoint stable equivalence of homotopy theories.
\end{proof}

Next we recall further properties of $\eta$ and $\rho$ from \cite{johnson-yau-permmult}.
\begin{proposition}[{\cite[6.11,~6.13,~7.3]{johnson-yau-permmult}}]\label{proposition:two-transf}
  The following statements hold for a small multicategory $\M$ and a small permutative category $\C$.
  \begin{enumerate}
  \item\label{it:eta-steq} The component
    \[
      \eta \cn \M \to \bige\bigf\M
    \]
    is a stable equivalence of multicategories.
  \item\label{it:rho-rt-adj} The component
    \[
      \rho \cn \C \to \bigf\bige\C
    \]
    is a right adjoint of underlying categories.
  \end{enumerate}
\end{proposition}
\begin{remark}\label{remark:epz}
  Both statements of \cref{proposition:two-transf} are proved by using construction of strict symmetric monoidal functors \cite[6.4]{johnson-yau-permmult}
  \[
    \epz = \epz_\C \cn \bigf\bige\C \to \C
  \]
  for each small permutative category $\C$, given by the following assignments:
  \begin{align}
    \epz\ang{x} & = \bigoplus_i x_i 
                  \andspace\label{eq:xi-x}\\
    \epz(f,\ang{\phi}) & = \Big(\bigoplus_j \phi_j \Big) \circ \xi_f\label{eq:xi-fphi}
  \end{align}
  where $\ang{x}$ is an object of $\bigf\bige\C$, $(f,\ang{\phi})\cn \ang{x} \to \ang{y}$ is a morphism of $\bigf\bige\C$, and $\xi_f$ is a certain permutation of summands \cite[9.2]{johnson-yau-permmult} determined by $f$.
  The results in \cite{johnson-yau-permmult} show that $\eta$ and $\epz$ are the unit and counit of a 2-adjunction between the 2-categories $\Multicat$ and $\permcatst$, where the 1-cells are \emph{strict} monoidal functors.  Moreover, for each $\C$ the components $(\epz_\C,\rho_\C)$ are an adjunction of underlying categories.

  However, the components $\epz_\C$ are \emph{not} natural with respect to general strictly unital symmetric monoidal functors.
  More generally, the following multinaturality diagram for $\epz$ with respect to a multilinear functor
  \[
    \ang{\C} \fto{P} \D
  \]
  fails to commute unless each of the linearity constraints $P^2_a$ is an identity.
  \begin{equation}\label{eq:epz-non-multinat}
    \begin{tikzpicture}[x=50mm,y=30mm,scale=.4,vcenter]
      \draw[0cell=.8] 
      (0,0) node (a) {\prod_a \bigf\bige\C_a}
      (a)++(1,1) node (b) {\prod_a \C_a}
      (b)++(1,-1) node (c) {\D}
      (a)++(1,-1) node (b') {\bigf\bige \D}
      ;
      \draw[1cell=.9] 
      (a) edge node {\binprod_a \epz_{\C_a}} (b)
      (b) edge node {P} (c)
      (a) edge['] node {\bigf\bige P} (b')
      (b') edge['] node {\epz_{\D}} (c)
      ;
    \end{tikzpicture}
  \end{equation} 
  Thus the components $\epz$ do not provide a counit for $\bigf$ and $\bige$ to be adjunction between $\Multicat$ and $\permcatsu$, either as 2-categories or as $\Cat$-multicategories.
\end{remark}

\begin{definition}\label{definition:alg-she}
  Suppose $\cO$ is a small non-symmetric $\Cat$-multicategory.  A \emph{non-symmetric $\cO$-algebra} in a $\Cat$-multicategory $\M$ is a non-symmetric $\Cat$-multifunctor
  \[
    \cO \to \M.
  \]
  The morphisms of non-symmetric $\cO$-algebras are $\Cat$-multinatural transformations.  The category of non-symmetric $\cO$-algebras and their morphisms in $\M$ is denoted $\M^\cO$.
  Considering the underlying 1-category of $\M$, suppose $(\M,\cW)$ is a relative category.  We define $\cW^\cO$ as the subcategory of $\M^\cO$ that contains all the non-symmetric $\cO$-algebras and the morphisms that are componentwise in $\cW$.  We consider the pair $(\M^\cO, \cW^\cO)$ as a relative category.
\end{definition}

Now we come to the proof of \cref{theorem:alg-hty-equiv}.
\begin{proof}[Proof of \cref{theorem:alg-hty-equiv}]
  The $\Cat$-multifunctoriality of $\bige$ and $\bigf$ (non-symmetric in the latter case) is described in \cref{sec:permcat,sec:cat-multi-f}, respectively.
  By \cref{lemma:eta-mnat,proposition:two-transf}~\cref{it:eta-steq}, $\eta$ provides a natural stable equivalence
  \begin{equation}\label{eq:1-to-EF}
    \eta^\cO \cn 1 \to (\bige^\cO)(\bigf^\cO).
  \end{equation} 
  Naturality of $\eta^\cO$ with respect to algebra morphisms follows from $\Cat$-multinaturality of $\eta$ in \cref{lemma:eta-mnat}.

  For the other composite, $\Cat$-multinaturality of the zigzag
  \[
    1 \xleftarrow{r} (-)^\tail \fto{\rho^\tail} \bigf\bige
  \]
  in \cref{lemma:rho-multinat} induces a zigzag of natural transformations between endofunctors of $\big( \permcatsu \big)^\cO$
  \begin{equation}\label{eq:zigzag-FE}
    1 \leftarrow (-)^\tail \rightarrow (\bigf^\cO)(\bige^\cO).
  \end{equation}
  For each permutative category $\C$, the adjunction of underlying categories $r_\C \dashv j_\C$ implies that $r_\C$ is a stable equivalence in $\permcatsu$ and $j_\C$ is a stable equivalence in $\permcat$.
  Furthermore, each $\rho_\C$ is a stable equivalence in $\permcat$ because it is a right adjoint of underlying categories, by \cref{proposition:two-transf}~\cref{it:rho-rt-adj}.
  Therefore, by the 2-out-of-3 property for stable equivalences and the factorization
  \[
    \rho_\C = \rho^\tail_\C \circ j_\C \cn \C \to \bigf\bige\C,
  \]
  each $\rho^\tail_\C$ is a stable equivalence in $\permcat$.  By
  \cref{proposition:permcat-permcatsu}, each $\rho^\tail_\C$ is therefore
  also a stable equivalence in $\permcatsu$.
  This implies that \cref{eq:zigzag-FE} is a zigzag of stable equivalences.

  Therefore, by \cref{gjo29}, $\bigf^\cO$ and $\bige^\cO$ are equivalences of homotopy theories.  This completes the proof.
\end{proof}

\section{Application to Ring Categories}\label{sec:application}

As a consequence of \cref{theorem:alg-hty-equiv}, the functors $\bigf^\cO$ and $\bige^\cO$ induce equivalences of homotopy theories of non-symmetric $\cO$-algebras.  This section gives an application to ring categories, which are non-symmetric algebras over the $\Cat$-enriched associative operad, $\As$, by \cite[11.2.16]{cerberusIII}.  We recall the essential definitions.

\begin{definition}[\cite{elmendorf-mandell}]\label{def:ringcat}
A \emph{ring category} is a tuple
\[\big(\C,(\oplus,\zer,\xiplus),(\otimes,\tu),(\fal,\far)\big)\]
consisting of the following data.
\begin{description}
\item[The Additive Structure] 
$(\C,\oplus,\zer,\xiplus)$ is a permutative category. 
\item[The Multiplicative Structure] 
$(\C,\otimes,\tu)$ is a strict monoidal category.
\item[The Factorization Morphisms] 
$\fal$ and $\far$ are natural transformations
\begin{equation}\label{ringcatfactorization}
\begin{tikzcd}[row sep=tiny,column sep=huge]
(A \otimes C) \oplus (B \otimes C) \ar{r}{\fal_{A,B,C}} & (A \oplus B) \otimes C\\
(A \otimes B) \oplus (A \otimes C) \ar{r}{\far_{A,B,C}} & A \otimes (B \oplus C)
\end{tikzcd}
\end{equation}
for objects $A,B,C\in\C$, which are called the \emph{left factorization morphism} and the \emph{right factorization morphism}, respectively.
\end{description}
We often abbreviate $\otimes$ to concatenation, with $\tensor$ always taking precedence over $\oplus$ in the absence of clarifying parentheses.  The subscripts in $\xiplus$, $\fal$, and $\far$ are sometimes omitted. 

The above data are required to satisfy the following seven axioms for all objects $A$, $A'$, $A''$, $B$, $B'$, $B''$, $C$, and $C'$ in $\C$.  Each diagram is required to be commutative.
\begin{description}
\item[The Multiplicative Zero Axiom] 
\[\begin{tikzcd}[column sep=large]
\boldone \times \C \ar{d}[swap]{\zer \times 1_{\C}} \ar{r}{\iso} & \C \ar{d}{\zer} & \C \times \boldone \ar{l}[swap]{\iso} \ar{d}{1_{\C} \times \zer}\\
\C \times \C \ar{r}{\otimes} & \C & \C \times \C \ar{l}[swap]{\otimes}
\end{tikzcd}\]
In this diagram, the top horizontal isomorphisms drop the $\boldone$ argument.  Each $\zer$ denotes the constant functor at $\zer \in \C$ and $1_\zer$.
\item[The Zero Factorization Axiom] 
\[\begin{aligned}
\fal_{\zer,B,C} &= 1_{B \otimes C} &\qquad \far_{\zer,B,C} &= 1_{\zer}\\
\fal_{A,\zer,C} &= 1_{A \otimes C} & \far_{A,\zer,C} &= 1_{A \otimes C}\\
\fal_{A,B,\zer} &= 1_{\zer} & \far_{A,B,\zer} &= 1_{A \otimes B}
\end{aligned}\]
\item[The Unit Factorization Axiom] 
\[\begin{split}
\fal_{A,B,\tu} &= 1_{A \oplus B}\\
\far_{\tu,B,C} &= 1_{B \oplus C}
\end{split}\]
\item[The Symmetry Factorization Axiom] 
\[\begin{tikzcd}
AC \oplus BC \ar{d}[swap]{\xiplus} \ar{r}{\fal} & (A \oplus B)C \ar{d}{\xiplus 1_C}\\
BC \oplus AC \ar{r}{\fal} & (B \oplus A)C
\end{tikzcd}\qquad
\begin{tikzcd}
AB \oplus AC \ar{d}[swap]{\xiplus} \ar{r}{\far} & A(B \oplus C) \ar{d}{1_A \xiplus}\\
AC \oplus AB \ar{r}{\far} & A(C \oplus B)
\end{tikzcd}\]
\item[The Internal Factorization Axiom]  
\[\begin{tikzcd}[cells={nodes={scale=.8}}]
AB \oplus A'B \oplus A''B \ar{d}[swap]{1 \oplus \fal} \ar{r}{\fal \oplus 1} & (A \oplus A')B \oplus A''B \ar{d}{\fal}\\
AB \oplus (A' \oplus A'')B \ar{r}{\fal} & (A \oplus A' \oplus A'')B
\end{tikzcd}\qquad
\begin{tikzcd}[cells={nodes={scale=.8}}]
AB \oplus AB' \oplus AB'' \ar{d}[swap]{1 \oplus \far} \ar{r}{\far \oplus 1} & A(B \oplus B') \oplus AB'' \ar{d}{\far}\\
AB \oplus A(B' \oplus B'') \ar{r}{\far} & A(B \oplus B' \oplus B")
\end{tikzcd}\]
\item[The External Factorization Axiom] 
\[\begin{tikzcd}[column sep=large,cells={nodes={scale=.8}}]
ABC \oplus A'BC \ar{d}[swap]{\fal_{AB,A'B,C}} \ar{r}{\fal_{A,A',BC}} & (A \oplus A')BC \ar[equal]{d}\\
(AB \oplus A'B)C \ar{r}{\fal_{A,A',B} 1_C} & (A \oplus A')BC
\end{tikzcd}
\qquad
\begin{tikzcd}[column sep=large,cells={nodes={scale=.8}}]
ABC \oplus AB'C \ar{d}[swap]{\far_{A,BC,B'C}} \ar{r}{\fal_{AB,AB',C}} & (AB \oplus AB')C \ar{d}{\far 1_C}\\
A(BC \oplus B'C) \ar{r}{1_A \fal_{B,B',C}} & A(B \oplus B')C
\end{tikzcd}\]
\[\begin{tikzcd}[column sep=huge]
ABC \oplus ABC' \ar{d}[swap]{\far_{A,BC,BC'}} \ar{r}{\far_{AB,C,C'}} & AB(C \oplus C') \ar[equal]{d}\\
A(BC \oplus BC') \ar{r}{1_A \far_{B,C,C'}} & AB(C \oplus C')
\end{tikzcd}\]
\item[The 2-By-2 Factorization Axiom]
\[\begin{tikzpicture}[xscale=3,yscale=1,baseline={(x2.base)}]
\def\h{1} \def\v{-1}
\draw[0cell=.8] 
(0,0) node (x0) {A(B \oplus B') \oplus A'(B \oplus B')}
(x0)++(-\h,\v) node (x1) {AB \oplus AB' \oplus A'B \oplus A'B'}
(x1)++(1.3*\h,\v) node (x2) {(A \oplus A')(B \oplus B')}
(x1)++(0,2*\v) node (x3) {AB \oplus A'B \oplus AB' \oplus A'B'}
(x3)++(\h,\v) node (x4) {(A \oplus A')B \oplus (A \oplus A')B'}
;
\draw[1cell=.9] 
(x1) edge node[pos=.4] {\far \oplus \far} (x0)
(x0) edge node {\fal} (x2)
(x1) edge node[swap] {1 \oplus \xiplus \oplus 1} (x3)
(x3) edge node[swap,pos=.4] {\fal \oplus \fal} (x4)
(x4) edge node[swap] {\far} (x2)
; 
\end{tikzpicture}\]
\end{description}
This finishes the definition of a ring category.
\end{definition}

Recall, e.g., from \cite[Section~11.1]{cerberusIII} the $\Cat$-enriched associative operad $\As$ detects monoid structures.  Since $\As$ is the free symmetric operad on the terminal non-symmetric operad, which we denote $\As'$, monoid structures are also detected by non-symmetric algebras over $\As'$.  This yields the following equivalent formulation of \cite[11.2.16]{cerberusIII}.
\begin{theorem}[{\cite[11.2.16]{cerberusIII}}]\label{theorem:III.11.2.16}
  For each small permutative category $\C$, there is a canonical bijective correspondence between
  \begin{itemize}
  \item ring category structures on $\C$ and
  \item non-symmetric $\Cat$-enriched multifunctors
    \[
      H \cn \As' \to \permcatsu \stspace H(*) = \C.
    \]
  \end{itemize}
\end{theorem}

Defining the morphisms of ring categories via morphisms of non-symmetric $\As'$-algebras, \cref{theorem:III.11.2.16} yields the following application of \Cref{theorem:alg-hty-equiv}.
\begin{corollary}\label{corollary:En-alg-hty-equiv}
  The $\Cat$-multifunctors (non-symmetric in the case of $\bigf$)
  \[
    \bigf \cn \Multicat \lradj \permcatsu \cn \bige,
  \]
  induce an equivalence of homotopy theories between
  associative monoids in $\Multicat$ and ring categories (\Cref{def:ringcat}).
\end{corollary}

\bibliographystyle{sty/amsalpha3}
\bibliography{references}

\providecommand{\bysame}{\leavevmode\hbox to3em{\hrulefill}\thinspace}
\providecommand{\MR}{\relax\ifhmode\unskip\space\fi MR }
\providecommand{\MRhref}[2]{%
  \href{http://www.ams.org/mathscinet-getitem?mr=#1}{#2}
}
\providecommand{\nopubyear}{$\infty$}
\providecommand{\doi}[1]{%
  doi:\href{https://dx.doi.org/#1}{\nolinkurl{#1}}}
\providecommand{\arxiv}[1]{%
  arXiv:\href{https://arxiv.org/abs/#1}{#1}}
\begin{thebibliography}{Yau$\infty$IIAA}

\bibitem[BK12]{barwick-kan}
C.~Barwick and D.~M. Kan, \emph{A characterization of simplicial localization
  functors and a discussion of {DK} equivalences}, Indag. Math. (N.S.)
  \textbf{23} (2012), no.~1-2, 69--79. \doi{10.1016/j.indag.2011.10.001}

\bibitem[DK80]{dwyer-kan}
W.~G. Dwyer and D.~M. Kan, \emph{Calculating simplicial localizations}, J. Pure
  Appl. Algebra \textbf{18} (1980), no.~1, 17--35.
  \doi{10.1016/0022-4049(80)90113-9}

\bibitem[EM06]{elmendorf-mandell}
A.~D. Elmendorf and M.~A. Mandell, \emph{Rings, modules, and algebras in
  infinite loop space theory}, Adv. Math. \textbf{205} (2006), no.~1, 163--228.
  \doi{10.1016/j.aim.2005.07.007}

\bibitem[EM09]{elmendorf-mandell-perm}
\bysame, \emph{Permutative categories, multicategories and algebraic
  {$K$}-theory}, Algebr. Geom. Topol. \textbf{9} (2009), no.~4, 2391--2441.
  \doi{10.2140/agt.2009.9.2391}

\bibitem[GJ09]{goerss-jardine}
P.~G. Goerss and J.~F. Jardine, \emph{Simplicial homotopy theory}, Modern
  Birkh\"{a}user Classics, Birkh\"{a}user Verlag, Basel, 2009, Reprint of the
  1999 edition [MR1711612]. \doi{10.1007/978-3-0346-0189-4}

\bibitem[GJO17a]{gjo-extending}
N.~Gurski, N.~Johnson, and A.~M. Osorno, \emph{Extending homotopy theories
  across adjunctions}, Homology Homotopy Appl. \textbf{19} (2017), no.~2,
  89--110. \doi{10.4310/HHA.2017.v19.n2.a6}

\bibitem[GJO17b]{gjo1}
\bysame, \emph{{$K$}-theory for 2-categories}, Adv. Math. \textbf{322} (2017),
  378--472. \doi{10.1016/j.aim.2017.10.011}

\bibitem[Hir03]{hirschhorn}
P.~S. Hirschhorn, \emph{Model categories and their localizations}, Mathematical
  Surveys and Monographs, vol.~99, American Mathematical Society, Providence,
  RI, 2003.

\bibitem[HSS00]{hss}
M.~Hovey, B.~Shipley, and J.~Smith, \emph{Symmetric spectra}, J. Amer. Math.
  Soc. \textbf{13} (2000), no.~1, 149--208. \doi{10.1090/S0894-0347-99-00320-3}

\bibitem[JY$\infty$]{cerberusIII}
N.~Johnson and D.~Yau, \emph{{B}imonoidal {C}ategories, {$E_n$}-{M}onoidal
  {C}ategories, and {A}lgebraic {$K$}-{T}heory. {V}olume {III}: {F}rom
  {C}ategories to {S}tructured {R}ing {S}pectra}, available at
  \url{https://nilesjohnson.net}.

\bibitem[JY21]{johnson-yau}
\bysame, \emph{{2}-{D}imensional {C}ategories}, Oxford University Press, New
  York, 2021. \doi{10.1093/oso/9780198871378.001.0001}

\bibitem[JY22]{johnson-yau-permmult}
\bysame, \emph{{M}ulticategories {M}odel {A}ll {C}onnective {S}pectra}, To
  appear in Homology, Homotopy and Applications (2022). \arxiv{2111.08653}

\bibitem[JS93]{joyal-street}
A.~Joyal and R.~Street, \emph{Braided tensor categories}, Adv. Math.
  \textbf{102} (1993), no.~1, 20--78. \doi{10.1006/aima.1993.1055}

\bibitem[ML98]{maclane}
S.~Mac~Lane, \emph{Categories for the working mathematician}, second ed.,
  Graduate Texts in Mathematics, vol.~5, Springer-Verlag, New York, 1998.

\bibitem[{Man}10]{mandell_inverseK}
M.~A. {Mandell}, \emph{{An inverse {$K$}-theory functor}}, {Doc. Math.}
  \textbf{15} (2010), 765--791.

\bibitem[Rez01]{rezk-homotopy-theory}
C.~Rezk, \emph{A model for the homotopy theory of homotopy theory}, Trans.
  Amer. Math. Soc. \textbf{353} (2001), no.~3, 973--1007 (electronic).
  \doi{10.1090/S0002-9947-00-02653-2}

\bibitem[Seg74]{segal}
G.~Segal, \emph{Categories and cohomology theories}, Topology \textbf{13}
  (1974), 293--312. \doi{10.1016/0040-9383(74)90022-6}

\bibitem[To{\"e}05]{toen-axiomatisation}
B.~To{\"e}n, \emph{Vers une axiomatisation de la th\'eorie des cat\'egories
  sup\'erieures}, $K$-Theory \textbf{34} (2005), no.~3, 233--263.
  \doi{10.1007/s10977-005-4556-6}

\bibitem[Yau$\infty$I]{cerberusI}
D.~Yau, \emph{{B}imonoidal {C}ategories, {$E_n$}-{M}onoidal {C}ategories, and
  {A}lgebraic {$K$}-{T}heory. {V}olume {I}: {S}ymmetric {B}imonoidal
  {C}ategories and {M}onoidal {B}icategories}, available at
  \url{https://u.osu.edu/yau.22/main/}.

\bibitem[Yau$\infty$II]{cerberusII}
\bysame, \emph{{B}imonoidal {C}ategories, {$E_n$}-{M}onoidal {C}ategories, and
  {A}lgebraic {$K$}-{T}heory. {V}olume {II}: {B}raided {B}imonoidal
  {C}ategories with {A}pplications}, available at
  \url{https://u.osu.edu/yau.22/main/}.

\bibitem[Yau16]{yau-operad}
\bysame, \emph{Colored operads}, Graduate Studies in Mathematics, vol. 170,
  American Mathematical Society, Providence, RI, 2016.

\end{thebibliography}
\end{document}